\documentclass[12pt,cleveref,capitalize,nameinlink,final]{colt2026} 
\usepackage{amsmath}
\usepackage{amssymb}
\usepackage{forloop}
\usepackage{nicefrac}
\usepackage{mathtools}
\usepackage[table,dvipsnames]{xcolor}
\usepackage{url}
\usepackage{bm}
\usepackage{float}
\usepackage{natbib}

\usepackage[normalem]{ulem}
\usepackage{amsfonts,mathtools,mathrsfs,color}
\usepackage{caption}
\usepackage{subcaption}
\usepackage{bm}
\usepackage{enumerate}
\usepackage{soul}
\usepackage{tcolorbox}
\usepackage{framed}
\usepackage{fontawesome5}

\usepackage{enumitem}
\usepackage{cancel}
\usepackage{framed}
\usepackage{booktabs}
\usepackage{makecell}
\usepackage{hyperref}
\usepackage{multirow}
\usepackage{wrapfig}

\newenvironment{prfsketch}{%
  \proof}{\endproof}
\newenvironment{proofof}[1]{\begin{proof}\textbf{of {#1}}}{\end{proof}}

\newcommand{\defcal} [1]{\expandafter\newcommand\csname cal#1\endcsname{{\mathcal #1}}}
\newcommand{\defsf} [1]{\expandafter\newcommand\csname sf#1\endcsname{{\mathsf #1}}}
\newcommand{\defbf} [1]{\expandafter\newcommand\csname bf#1\endcsname{{\mathbf #1}}}
\newcommand{\defbb} [1]{\expandafter\newcommand\csname bb#1\endcsname{{\mathbb{#1}}}}
\newcommand{\deffrak} [1]{\expandafter\newcommand\csname frak#1\endcsname{{\mathfrak{#1}}}}

\newcounter{ct}
\forLoop{1}{26}{ct}{
    \edef\letter{\Alph{ct}}
    \expandafter\defcal\letter
    \expandafter\defsf\letter
    \expandafter\defbf\letter
    \expandafter\defbb\letter
    \expandafter\deffrak\letter
}
\forLoop{1}{26}{ct}{
    \edef\letter{\alph{ct}}
    \expandafter\defcal\letter
    \expandafter\defsf\letter
    \expandafter\defbf\letter
    \expandafter\defbb\letter
    \expandafter\deffrak\letter
}

\newcommand{\rmd}{\mathrm{d}}
\newcommand{\rmI}{\mathrm{I}}

\newcommand{\numberthis}{\addtocounter{equation}{1}\tag{\theequation}}
\newcommand{\xstar}{x^{\star}}

\newcommand{\subscript}[2]{$#1 _ #2$}
\newlist{assumplist}{enumerate}{1}
\setlist[assumplist]{label=\subscript{\textbf{\textsf{A}}}{\textsf{{\arabic*}}},leftmargin=*, itemsep=0pt}

\newtheorem*{lemma*}{Lemma}
\newtheorem*{corollary*}{Corollary}
\newtheorem*{assumption*}{Assumption}

\makeatletter
\newcommand*{\algotitle}[2]{%
  \stepcounter{algocf}%
  \hypertarget{algocf.title.\theHalgocf}{}%
  \NR@gettitle{#1}%
  \label{#2}%
  \addtocounter{algocf}{-1}%
}
\makeatother

\usepackage{algorithm}
\usepackage{algpseudocode}

\algrenewcommand\algorithmicrequire{\textbf{Input:}}
\algrenewcommand\algorithmicensure{\textbf{Output:}}

\crefname{algocf}{alg.}{algs.}
\Crefname{algocf}{Algorithm}{Algorithms}

\newcommand{\R}{\mathbb{R}}

\newcommand{\lp}{\left(}
\newcommand{\rp}{\right)}

\usepackage{booktabs}
\usepackage{array}
\newcolumntype{Y}{>{\raggedright\arraybackslash}X}

\newcommand{\HF}{\mathsf{HF}}

\newcommand{\bz}{\mathbf{0}}

\usepackage{minitoc}

\makeatletter
\@ifundefined{lemma}{}{ }
\@ifundefined{corollary}{}{ }
\@ifundefined{theorem}{}{ }
\@ifundefined{remark}{}{ }
\@ifundefined{definition}{}{ }
\makeatother

\newtheorem{theorem}{Theorem} 
\newtheorem{lemma}{Lemma}
\newtheorem{corollary}{Corollary}
\newtheorem{definition}{Definition}
\newtheorem{remark}{Remark}

\title[Accelerated Convex Optimization via Hamiltonian Dynamics]{Accelerated Convex Optimization via\\
Hamiltonian Dynamics with Deterministic Integration Time
}
\usepackage{times}
\allowdisplaybreaks

\coltauthor{%
 \Name{Xiuyuan Wang$^*$} \Email{xiuyuan.wang@yale.edu}\\
 \addr Department of Computer Science, Yale University 
 \AND
 \Name{Vishwak Srinivasan$^*$} \Email{vishwaks@mit.edu} \\
 \addr Department of EECS, Massachusetts Institute of Technology 
 \AND
 \Name{Qiang Fu} \Email{qiang.fu@yale.edu} \\
 \addr Department of Computer Science, Yale University 
 \AND
 \Name{Siddharth Mitra} \Email{siddharth.mitra@yale.edu} \\
 \addr Department of Computer Science, Yale University 
 \AND
 \Name{Ashia Wilson} \Email{ashia@mit.edu} \\
 \addr Department of EECS, Massachusetts Institute of Technology 
 \AND
 \Name{Andre Wibisono} \Email{andre.wibisono@yale.edu} \\
 \addr Department of Computer Science, Yale University
}

\newcommand{\CONTALG}{\textsf{HFA}} 
\newcommand{\DISCALG}{\textsf{dHFA-eg}} 
\newcommand{\PDISCALG}{\textsf{dHFA-im}} 

\newcommand{\HMC}{\textsf{HMC}}

\newcommand{\convexassump}{\hyperref[assump:cvx]{ \textsf{A}\textsubscript{\textsf{cvx}}}}
\newcommand{\smoothassump}{\hyperref[assump:smth]{ \textsf{A}\textsubscript{\(L\)-\textsf{sm}}}}
\newcommand{\quadgrowassump}{\hyperref[assump:qg]{ \textsf{A}\textsubscript{\(\alpha\)-\textsf{qg}}}}
\newcommand{\strongconvexassump}{\hyperref[assump:sc]{ \textsf{A}\textsubscript{\(\alpha\)-\textsf{sc}}}}
\begin{document}

\maketitle

\vspace*{-1.75mm}
\begin{abstract}
\noindent We develop Hamiltonian dynamics-based algorithms for smooth convex optimization that achieve accelerated rates of convergence. By exploiting contraction of averaged Hamiltonian flow trajectories rather than requiring contraction at trajectory endpoints, we show that Hamiltonian dynamics-based optimization methods admit deterministic and accelerated convergence guarantees, extending prior work that is limited to quadratic objectives or holds only in expectation. We analyze an idealized continuous-time algorithm and derive practical discrete-time implementations with optimal first-order complexity, thereby establishing Hamiltonian dynamics as a useful algorithmic primitive for deterministic accelerated convex optimization.
\end{abstract}
\begin{keywords}
Convex optimization, first-order optimization, Hamiltonian dynamics, acceleration
\end{keywords}

\def\thefootnote{$^*$}\footnotetext{Both authors contributed equally.}

\doparttoc
\faketableofcontents

\part{}

\vspace*{-10mm}
\section{Introduction}
\label{sec:intro}

Algorithms to optimize a function are ubiquitous in science and engineering, and they have attracted increased attention in recent years especially in machine learning, computer science, and statistics owing to the need to solve large-scale high-dimensional computational problems.
Recent algorithmic designs and analyses have drawn insights from continuous-time dynamical systems, both to obtain a better understanding of existing optimization algorithms and their convergence analyses, as well as to design new optimization algorithms with rigorous theoretical guarantees.

In this work, we study the optimization problem
\begin{equation*}
    \min_{x \in \R^d} ~~ f(x)
\end{equation*}
where \(f \colon \bbR^{d} \to \bbR\) is a differentiable convex function, and we assume access to the gradient oracle $\nabla f \colon \R^d \to \R^d$.
This is classically referred to as \textit{first-order convex optimization}.
We propose a novel accelerated optimization algorithm based on \emph{Hamiltonian dynamics}, which has been highly successful in the closely related algorithmic task of sampling.

The simplest gradient-based method for minimizing \(f\) is the gradient descent algorithm.
Gradient descent can be viewed as the forward-Euler discretization of the \textit{gradient flow} dynamics, which is a simple continuous-time greedy dynamics that converges to a minimizer of \(f\) when \(f\) is convex.
\begin{equation*}
    \underbrace{\dot{X}_{t} = -\nabla f(X_{t})}_{\text{Gradient flow of }f} \enskip \xrightarrow{\text{forward Euler}} \enskip \underbrace{x_{k + 1} = x_{k} - \eta_{k} \, \nabla f(x_{k})}_{\text{Gradient descent of }f}~.
\end{equation*}
The choice of step size \(\eta_{k}\) determines the rate of convergence of gradient descent to a minimizer of \(f\), which is often chosen based on properties of \(f\), or via a backtracking line search scheme.
When \(f\) is known to be \(L\)-smooth (see \smoothassump{}), a recommended choice for \(\eta_{k}\) is \(\nicefrac{1}{L}\).
In the table below, we present the scalings of time \(T\) for the continuous-time dynamics, and the number of the iterations \(K\) for the discrete-time algorithm, to return a point \(\hat{x}\) such that \(f(\hat{x}) - f^{\star} \leq \varepsilon\) in the settings when $f$ is either convex or strongly convex, in addition to being $L$-smooth.

\begin{center}
\renewcommand{\arraystretch}{1.25}
\centering
\begin{tabular}{c|c|c}
    & Gradient Flow & Gradient Descent \\
    [1mm]
    \hline
    \(f\) is convex & \(1/\varepsilon\) & \(L/\varepsilon\) \\
    \(f\) is \(\alpha\)-strongly convex  & \((1/\alpha)\log\left(1/\varepsilon\right)\) & \( (L/\alpha) \log\left(1/\varepsilon\right)\)
\end{tabular}
\end{center}

A natural question is if the complexity achieved by gradient flow and gradient descent can be improved.
The seminal work of \citet{nesterov1983method} develops a discrete-time accelerated gradient descent algorithm whose complexity are quantitatively better than gradient descent for smooth convex optimization.
Notably, this method is \emph{optimal} in the sense that it achieves the best possible complexity for this problem in discrete time \citep[\S 2.1]{nesterov2018lectures}.
However, both the method and its analysis was rather delicate, and there have been many alternative constructions and analyses of accelerated algorithms \citep{drori2014performance,bubeck2015geometric,allen2017linear,drusvyatskiy2018optimal, wilson2019accelerating}.

A dynamical perspective of \citeauthor{nesterov1983method}'s method was presented in \citet{su2016differential}, where they demonstrate accelerated gradient descent is a discretization of a second-order continuous-time dynamics:
\begin{equation*}
    \underbrace{\ddot{X}_{t} + \beta_{t} \dot{X}_{t} + \nabla f(X_{t}) = 0}_{\text{Accelerated Gradient Flow of }f} \enskip \xrightarrow{\text{discretize}}\enskip \text{Accelerated Gradient Descent of }f~.
\end{equation*}
\citet{su2016differential} shows the damping factor is \(\beta_{t} = \nicefrac{3}{t}\) for convex $f$, and shows accelerated gradient flow has a matching convergence rate as accelerated gradient descent.
For \(\alpha\)-strongly convex \(f\), \citet{wilson2021lyapunov} show that accelerated gradient flow with \(\beta_{t} = 2\sqrt{\alpha}\) converges faster than the gradient flow, and that a discretization of these dynamics is equivalent to an accelerated method for strongly convex function \citep[\S 2.2.2]{nesterov2018lectures}.
See table below for a summary of the rates.
\begin{center}
\renewcommand{\arraystretch}{1.25}
\centering
\begin{tabular}{c|c|c}
    & Accelerated Gradient Flow & Accelerated Gradient Descent \\
    [1mm]
    \hline
    \(f\) is convex & \(1/\sqrt{\varepsilon}\) & \(\sqrt{L / \varepsilon}\) \\
    \(f\) is \(\alpha\)-strongly convex  & \((1/\sqrt{\alpha}) \, \log(1 / \varepsilon)\) & \(\sqrt{L/\alpha} \, \log(1 / \varepsilon)\)
\end{tabular}
\refstepcounter{table}\label{tab:rates-agf-agd}
\end{center}

While the accelerated gradient method continues to be studied past these developments, in recent years, there have been new approaches to improve the complexity of gradient descent via a more principled choice of stepsizes \(\{\eta_{k}\}_{k \geq 0}\); see \citet{grimmer2024provably} for an overview of recent work.
Among these approaches, the best known rates for minimizing $L$-smooth convex and \(\alpha\)-strongly convex \(f\) scale as \(L/k^{\theta}\) and \((1 - ({\alpha}/{L})^{\nicefrac{1}{\theta}})^{k}\), respectively, where $\theta=\log_2(1+\sqrt{2})\approx 1.2716$ \citep{altschuler2025accelerationa,altschuler2025accelerationb}.
These approaches are noteworthy in how they deviate from classical gradient descent analyses which advocated for small stepsizes to enforce a descent method.

In this vein, our work deviates from classical dynamics and algorithm design for methods that achieve accelerated rates of convergence for smooth (strongly) convex optimization by adopting ideas from sampling while retaining a dynamical perspective.
We specifically take inspiration from the \textit{Hamiltonian Monte Carlo} (\HMC{}) \citep{duane1987hybrid,neal2011mcmc}, which is an algorithmic framework for sampling based on the Hamiltonian flow (\ref{eq:HamFlow}):
\begin{equation}
\dot{X}_{t} = Y_t~,
\qquad
\dot{Y}_{t} = -\nabla f(X_t)~.\tag{\textsf{HF}}
\end{equation}
\HMC{}-based samplers are widely used in practice; a popular example is the No-U-Turn Sampler (NUTS)~\citep{hoffman2014no}, which underlies modern probabilistic programming languages such as Stan~\citep{carpenter2017stan}, and is regarded as a leading approach for high-dimensional Bayesian inference.
In recent years, the perspective of sampling as performing optimization in the space of measures~\citep{wibisono2018sampling} have led to better analyses of sampling algorithms (e.g., \citet{dalalyan2017further,dalalyan2017theoretical,durmus2019analysis}) and the development of sampling algorithms that translate ideas from optimization (e.g., \citet{salim2019stochastic,zhang2020wasserstein,ma2021there,srinivasan2025highaccuracy,chen2025accelerating}).
In contrast, our work takes the opposite perspective, where we try to translate ideas from sampling to develop better optimization algorithms.

The Hamiltonian dynamics-based algorithm we study for optimization augments the position space $\R^d$ we want to optimize over with a velocity space $\R^d$, to get  the \emph{phase space} $\R^{2d}$ over which the Hamiltonian dynamics is defined (see \cref{sec:ham-dynamics-def} for a more detailed overview).
The most basic Hamiltonian dynamics-based optimization algorithms -- which we call \ref{eq:HF-opt-update} -- performs the following iteratively: from current iterate \(x_{\text{in}}\), do
\begin{equation*}
\hspace*{-0.03cm}
\left.\begin{aligned}
    & \text{Step (i)} \quad ~~\text{Lift to phase space as } (X_0, Y_0) = (x_{\text{in}}, \bm{0}) \in \bbR^{2d}~. \\
    & \text{Step (ii)} \quad ~\text{Simulate (\ref{eq:HamFlow}) } \text{ for \emph{integration time} } T > 0 \text{ to reach } (X_T,Y_T)~. ~~~ \\
    & \text{Step (iii)} \quad \text{Output } x_{\text{out}} := X_{T}~\text{ as the next iterate.}
\end{aligned}
\!\right\}
\label{eq:HF-opt-update}\tag{\textsf{HFopt}}
\end{equation*}
This scheme results in a non-increasing sequence of function values, which is due to the conservation property of the Hamiltonian dynamics.
To the best of our knowledge, this was first proposed in \citet{teel2019first}, and they show a non-accelerated rate of convergence.
\citet{wang2025frictionlesshamiltoniandescentcoordinate} studies the specific setting of minimizing strongly convex quadratics, and show that a careful choice of integration times can result in an accelerated rate of convergence.
Subsequent work by \citet{fu2025hamiltoniandescentalgorithmsoptimization} advance this frontier by showing that \ref{eq:HF-opt-update} with \emph{randomized} integration times achieve accelerated convergence rates for general smooth convex and strongly convex objective functions, albeit the guarantees being in expectation over the randomness of the integration time.
See \Cref{app:AdditionalRelatedWorks} for additional discussion on related works.

\subsection{Our contributions}
A key insight we develop in this work is in how time averages of the solutions of \ref{eq:HamFlow} behave when \(f\) is convex; this is formalized as~\Cref{lem:hf-convex-main} and~\Cref{cor:hf-convex-main}.
Essentially, we discover that an average of the positions \((X_{t})_{t \in [0, T]}\) along~\ref{eq:HamFlow} over a sufficiently long integration time $T$ is more conducive to the goal of optimizing \(f\) rather than simply the endpoint \(X_{T}\) used in \ref{eq:HF-opt-update}; we provide an example to illustrate this in \Cref{app:sec:example-hf-quad}.
Specifically, when \(f\) satisfies the \(\alpha\)-quadratic growth condition (which is implied by \(\alpha\)-strong convexity), we find a good choice of the integration time scales as \(\nicefrac{1}{\sqrt{\alpha}}\).
Notably, this scaling coincides with the expected integration time considered by \citet{fu2025hamiltoniandescentalgorithmsoptimization} in their randomized version of \ref{eq:HF-opt-update} in the strongly convex setting.
This can also be contrasted to short integration times for which non-accelerated guarantees for \ref{eq:HF-opt-update} have been shown, see \citep[Theorem~1]{fu2025hamiltoniandescentalgorithmsoptimization}.
This insight forms the basis of our proposed idealized algorithm ``Hamiltonian Flow for optimization with Averaging '' (\nameref{alg:phfopt}), shown as Algorithm \ref{alg:phfopt}.
(Here ``idealized'' means we assume we can simulate the continuous-time Hamiltonian flow \ref{eq:HamFlow} exactly.)
For minimizing convex and strongly convex \(f\), we show in \Cref{sec:hf_convexity} that \CONTALG{} achieves accelerated convergence rates, matching those of the accelerated gradient flow;
see \Cref{thm:hf-cts-cvx} for the convex case, and \Cref{thm:hf-cts-strong-cvx} for the strongly convex case.

\begin{algorithm}[t]
\DontPrintSemicolon

\caption{Hamiltonian Flow for optimization with Averaging (\CONTALG{})}
\algotitle{\CONTALG{}}{alg:phfopt}
\SetKwInOut{Input}{Input}\SetKwInOut{Output}{Output}
\SetAlgoLined
\Input{Initialization: $X_0\in \R^d$; \,  number of iterations: $K \in \mathbb{N}$; \, parameter: $\lambda \geq 0$; \, integration time sequence: $\{T_1, \dots, T_K\}$.}
\For{\(k = 1\) \KwTo \(K\)}{
    Solve \ref{eq:HamFlow} from \((X_{0}^{(k)}, Y_{0}^{(k)}) = (X_{k - 1}, \bm{0})\) for time \(T_{k}\) to obtain trajectory \(((X_{t}^{(k)}, Y_{t}^{(k)}))_{t \in [0, T_{k}]}\).
    
    Compute \(\displaystyle X^{\mathrm{avg}}(X_{k - 1}; T_{k}) = \frac{2}{T_{k}^2} \int_0^{T_{k}} (T_{k}-t) X_{t}^{(k)}\rmd t\)\,.

     Set $\displaystyle X_{k} = \frac{1}{\lambda+1}X^{\mathrm{avg}}(X_{k - 1}; T_{k}) + \frac{\lambda}{\lambda+1}X_{T_{k}}^{(k)}$\,.
    }
\Return $X_K$
\end{algorithm}

While the Hamiltonian flow \ref{eq:HamFlow} provide valuable algorithmic guidance, these dynamics admit exact solutions only in special cases.
Therefore, to implement the \nameref{alg:phfopt} as a concrete discrete-time algorithm, we discretize \ref{eq:HamFlow} via a numerical integrator, and replace the continuous-time average with a discrete average of the iterates.
In particular, we discretize the Hamiltonian flow using the \emph{extragradient integrator} (which approximates the implicit integrator, see~\Cref{sec:ham-dynamics-def} for a review).
Our discrete-time version of \nameref{alg:phfopt} is presented in \Cref{sec:discrete-algos}. 
To the best of our knowledge, our algorithm differs from previous strategies for designing accelerated gradient methods based on averaging \citep{d2021acceleration}.
We show that the iteration complexity of this algorithm to find an \(\varepsilon\)-approximate solution has the same scaling as the iteration complexity of Nesterov's accelerated gradient method for smooth convex and strongly convex optimization presented in \cref{tab:rates-agf-agd}, see \Cref{thm:hf-extg-cvx} for the convex case and \Cref{thm:hf-extg-strong-cvx} for the strongly convex case.
Furthermore, we emphasize the rates of our Hamiltonian-based algorithms are \emph{deterministic}, in contrast those in the previous work of~\citet{fu2025hamiltoniandescentalgorithmsoptimization} which only hold in expectation.
\section{Preliminaries}

\paragraph{Notation.} Throughout the paper, we consider minimizing a differentiable function $f \colon \R^d \to \R$.
The minimum of \(f\) is assumed to exist uniquely, which we denote with $\xstar$.
For vectors \(v, w\), the Euclidean inner product is \(\langle v, w\rangle\), and \(\|v\| = \sqrt{\langle v, v\rangle}\) denotes the Euclidean norm of \(v\).
For a time-dependent quantity \(g_{t}\), we use the shorthand \(\dot{g}_{t} = \frac{\rmd}{\rmd t} g_{t}\) when the derivative exists.
For a real number \(a\), the ceiling \(\lceil a \rceil\) denotes the smallest integer that is greater than or equal to \(a\).
We say that \(\hat{x}\) is an \textit{\(\varepsilon\)-accurate minimizer} of \(f\) if \(f(\hat{x}) - f(x^{\star}) \leq \varepsilon\)~.

\paragraph{Assumptions.}
We begin by introducing structural properties of \(f\).

\newcounter{desccount}
\newcommand{\descitem}[2]{%
\item[\textsf{A}\textsubscript{\textsf{#1}}]\refstepcounter{desccount}\label{assump:#1}
}

\begin{description}
    \item [\textsf{A}\textsubscript{\textsf{cvx}}]\refstepcounter{desccount}\label{assump:cvx} \hskip6.7pt (\textbf{Convexity: }) for any \(x, x' \in \bbR^{d}\), \(f(x') \geq f(x) + \langle \nabla f(x), x' - x\rangle\).
    \item [\textsf{A}\textsubscript{\(L\)-\textsf{sm}}]\refstepcounter{desccount}\label{assump:smth} (\textbf{\(L\)-Smoothness: }) for any \(x, x' \in \bbR^{d}\), \(f(x') \leq f(x) + \langle \nabla f(x), x' - x\rangle + \frac{L}{2}\|x' - x\|^{2}\).
    \item [\textsf{A}\textsubscript{\(\alpha\)-\textsf{qg}}]\refstepcounter{desccount} \label{assump:qg}  \hskip2.1pt (\textbf{\(\alpha\)-Quadratic Growth: }) for any \(x \in \bbR^{d}\), \(f(x) \geq f(x^{\star}) + \frac{\alpha}{2}\|x - x^{\star}\|^{2}\).
    \item [\textsf{A}\textsubscript{\(\alpha\)-\textsf{sc}}]\refstepcounter{desccount}\label{assump:sc} \hskip3pt (\textbf{\(\alpha\)-Strong Convexity: }) for any \(x, x' \in \bbR^{d}\), \(f(x') \geq f(x) + \langle \nabla f(x), x' - x\rangle+ \frac{\alpha}{2}\|x - x'\|^{2}\).
\end{description}

For the key property of the Hamiltonian dynamics that we derive in \cref{sec:hf_convexity}, it is sufficient for \convexassump{} and \smoothassump{} to hold.
Note that \strongconvexassump{} implies \quadgrowassump{} and \convexassump{}, and our algorithmic results only require \convexassump{} and \quadgrowassump{}.
The assumptions
\convexassump{} and \quadgrowassump{} imply for any \(x \in \bbR^{d}\), \(\langle \nabla f(x), x - x^{\star}\rangle \geq \frac{\alpha}{2}\|x - x^{\star}\|^{2}\);
this is the \emph{restricted strong convexity} condition \citep[Definition 3]{zhang2013gradient}, which is weaker than strong convexity, and implies \(f\) satisfies the Polyak-\L{}ojasiewicz inequality, see \citet[Appendix~A]{karimi2016linear}.
When \(f\) satisfies \smoothassump{} and \quadgrowassump{}, we use \(\kappa := \nicefrac{L}{\alpha}\) to denote the \emph{condition number} of \(f\).

\subsection{Hamiltonian Dynamics}
\label{sec:ham-dynamics-def}

We review the Hamiltonian dynamics, which is the key ingredient of the algorithms in this paper.
Recall we seek to minimize an objective function $f \colon \R^d \to \R$.
We define the \emph{energy function} or the \textit{Hamiltonian} \(H \colon \R^d \times \R^d \to \R\) by 
$$H(x, y) = f(x) +\frac{1}{2}\|y\|^{2} \,.$$
In classical mechanics, $x$ denotes position and $y$ denotes momentum (or velocity), and the Hamiltonian can be interpreted as the sum of the potential energy $f(x)$ and the kinetic energy $\frac{1}{2}\|y\|^2$ of a unit-mass particle.
The Hamiltonian dynamics or Hamiltonian flow is a system of differential equations that describes the evolution of the position and velocity $(X_t, Y_t) \in \R^{2d}$ over time $t \ge 0$ via
\begin{equation}\label{eq:HamFlow}
\dot{X}_{t} = \nabla_y H(X_t,Y_t) = Y_t~,
\qquad
\dot{Y}_{t} = -\nabla_x H(X_t,Y_t) = -\nabla f(X_t)~.\tag{\textsf{HF}}
\end{equation}

A key property of the Hamiltonian dynamics is that the energy function of the system is conserved for all time \(t \geq 0\); this can be shown by direct computation (see \cref{app:HF_Background} for a review).

\begin{lemma}
\label{lem:hamflow-conserve}
For any \(t \geq 0\) and initial conditions \((X_{0}, Y_{0})\), \(f(X_{t}) + \frac{1}{2}\|Y_{t}\|^{2} = f(X_{0}) + \frac{1}{2}\|Y_{0}\|^{2}\)~.
\end{lemma}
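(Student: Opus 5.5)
The plan is to differentiate the energy along the trajectory and show that its time derivative vanishes identically. Define \(E(t) := H(X_t, Y_t) = f(X_t) + \frac{1}{2}\|Y_t\|^{2}\). Since \(f\) is differentiable and \((X_t, Y_t)\) is a (differentiable) solution of \ref{eq:HamFlow}, the map \(t \mapsto E(t)\) is differentiable. The chain rule gives
\begin{equation*}
\dot{E}(t) = \langle \nabla f(X_t), \dot{X}_t\rangle + \langle Y_t, \dot{Y}_t\rangle .
\end{equation*}

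Next I substitute the dynamics \(\dot{X}_t = Y_t\) and \(\dot{Y}_t = -\nabla f(X_t)\). This yields
\begin{equation*}
\dot{E}(t) = \langle \nabla f(X_t), Y_t\rangle - \langle Y_t, \nabla f(X_t)\rangle = 0 .
\end{equation*}
More abstractly, this is \(\langle \nabla_x H, \nabla_y H\rangle - \langle \nabla_y H, \nabla_x H\rangle = 0\), which reflects the skew-symmetric structure of Hamiltonian flows.

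Since \(\dot{E}\equiv 0\) on \([0,\infty)\), the mean value theorem (or the fundamental theorem of calculus) gives \(E(t) = E(0)\) for all \(t\ge 0\). This is exactly the claimed identity.

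The only point needing care is regularity: the derivation uses that \(E\) is differentiable and that the trajectory exists on \([0,t]\). Differentiability of \(f\) together with differentiability of \(t\mapsto X_t\) suffices for the chain rule, so \(E\) is differentiable and the mean value theorem applies without needing \(\dot{E}\) to be continuous. Existence of the trajectory for all \(t\ge0\) follows, under \smoothassump{}, from global Lipschitzness of the vector field \((x,y)\mapsto(y,-\nabla f(x))\) via Picard--Lindelöf. This step is otherwise routine, so I expect no genuine obstacle.
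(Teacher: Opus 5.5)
Your proposal is correct and follows the paper's own argument: the paper differentiates $H(Z_t)$ along the flow and gets $\nabla H(Z_t)^\top \Omega \nabla H(Z_t) = 0$ by skew-symmetry of $\Omega$, which is exactly your cancellation $\langle \nabla f(X_t), Y_t\rangle - \langle Y_t, \nabla f(X_t)\rangle = 0$. Your regularity remarks are a harmless addition, since the paper simply assumes that the flow admits a unique global solution.
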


The Hamiltonian flow map \(\mathsf{HF} : \bbR^{2d} \times [0, \infty) \to \bbR^{2d}\) maps a tuple \((x, y)\) and time \(t\) to the solution of \ref{eq:HamFlow} at time \(t\) from initial condition \((X_{0}, Y_{0}) = (x, y)\) i.e., 
$$\mathsf{HF}((x, y); t) = (X_{t}, Y_{t})\,.$$
As discussed in \Cref{sec:intro}, the Hamiltonian flow map cannot be computed explicitly for a general potential \(f\).
A setting in which is this map is available is when \(f\) is quadratic, which is leveraged by \citet{wang2025frictionlesshamiltoniandescentcoordinate} to develop accelerated algorithms for minimizing strongly convex quadratic functions.

\paragraph{Numerical integrators.}
We discuss two numerical integrators to approximate \(\mathsf{HF}((x, y); t)\): the implicit integrator and the extragradient integrator, both of which depend on a step size \(\eta > 0\).
Below, $n \in \mathbb{N}_0$ denotes the discrete-time index.

\begin{definition}\label{defn:imp}
The \emph{implicit integrator} of Hamiltonian flow \emph{(\ref{eq:HamFlow})} from initial values \((x_{0}, y_{0})\) with step size \(\eta > 0\) generates the sequence \(\{(x_{n}, y_{n})\}_{n \geq 0}\) where \((x_{n}, y_{n})\) satisfies the recursion
\begin{equation}
\tag{\textsf{HF\textsuperscript{imp}}}
\label{eq:implicit-integ}
\begin{aligned}
    x_{n+1} &= x_n + \eta y_{n+1}~; \\
    y_{n+1}  &= y_n-\eta \nabla f(x_{n+1})~.
\end{aligned}
\end{equation}
\end{definition}
The implicit integrator is named as such since given the current iterate \((x_{n}, y_{n})\), the next iterate \((x_{n + 1}, y_{n + 1})\) is obtained by solving an implicit update. We choose the implicit integrator as a starting point due to: (a) its dissipative nature for convex Hamiltonian, which is sufficient for a descent scheme; and (b) its numerical stability, which does not require smoothness of the Hamiltonian. The implicit integrator can be implemented, for example, using a proximal oracle for \(f\) in addition to a gradient oracle as stated below (see \cref{app:sec:integrator-discuss}).
\begin{align*}
    x_{n + 1} &= \mathrm{Prox}_{\eta^{2}f}(x_{n} + \eta y_{n})~;\\
    \quad y_{n + 1} &= y_{n} - \eta \nabla f(x_{n + 1})~.
\end{align*}
However, a proximal oracle is stronger than having access to gradients of \(f\).
To circumvent this, \citet{fu2025hamiltoniandescentalgorithmsoptimization} propose the \emph{extragradient integrator}, which approximates a step of the implicit integrator.
This specifically operates by replacing the call to the proximal oracle for \(f\) by a gradient descent step, defined below.
\begin{definition}\label{defn:extg}
The \emph{extragradient integrator} of Hamiltonian flow \emph{(\ref{eq:HamFlow})} from initial values \((x_{0}, y_{0})\) with step size \(\eta > 0\) generates the sequence \(\{(x_{n}, y_{n})\}_{n \geq 0}\) where \((x_{n}, y_{n})\) satisfies the recursion
\begin{equation}
\tag{\textsf{HF\textsuperscript{extg}}}
\label{eqn:extg-update}
\begin{aligned}
    x_{n + \nicefrac{1}{2}} &= x_{n} + \eta \, y_{n}~; \\
    x_{n + 1} &= x_{n + \nicefrac{1}{2}} - \eta^{2} \, \nabla f(x_{n + \nicefrac{1}{2}})~;\\
    y_{n + 1} &= y_{n} - \eta \, \nabla f(x_{n + 1})~.
\end{aligned}
\end{equation}
\end{definition}
Since we are interested in the first-order methods for minimizing \(f\), we focus on the extragradient in the main text, and discuss further details of the integrators in \cref{app:sec:integrator-discuss}.
For either integrator, the output after \(n\) discrete iterations with step size \(\eta\) can be viewed as approximating the solution of \ref{eq:HamFlow} at continuous time \(t = n \times \eta\).
\section{Exact Hamiltonian Flow for minimizing convex \texorpdfstring{\(f\)}{f}}
\label{sec:hf_convexity}

\subsection{Properties of the average position along the Hamiltonian flow trajectory}
The principle of energy conservation (see \Cref{lem:hamflow-conserve}) is a key property of the Hamiltonian flow, and does not place any assumptions on the potential \(f\) beyond differentiability. As a consequence of energy conservation, when \(Y_{0} = \bm{0}\), we have $f(X_t)+\frac12\|Y_t\|^2=f(X_0)$, and hence \(f(X_t)\leq f(X_0)\) for every \(t\geq 0\). Therefore, outputting the endpoint of Hamiltonian flow is always a descent method. In particular, for short integration times of order \(1/\sqrt L\) when $f$ is $L$-smooth, endpoint guarantees can be obtained; see \cite[Lemma~6]{fu2025hamiltoniandescentalgorithmsoptimization}. However, such short-time endpoint guarantees do not yield acceleration: in \Cref{lem:LowerBound}, we show that with a fixed short integration time \(T=c/\sqrt L\), the total integration time required to find an $\varepsilon$-close point with \eqref{eq:HF-opt-update} scales as \(\Omega((\sqrt{L}/\alpha)\log(1/\varepsilon))\) in the worst case. 
Our key observation is that when \(f\) is convex, an \textit{averaged} iterate over a suitably long trajectory can yield a stronger descent guarantee than that implied by the endpoint of Hamiltonian flow run from \(Y_{0}= \bm{0}\).
We formalize this key property of the Hamiltonian flow trajectory in the following lemma, which to the best of our knowledge has not been derived previously.
\begin{lemma}
\label{lem:hf-convex-main}
Assume \(f\) satisfies \emph{\convexassump{}}.
For any \(t > 0\), initial position \(X_{0} \in \bbR^{d}\), and any reference position \(z \in \bbR^{d}\), the Hamiltonian flow trajectory \((\HF((X_{0}, \bm{0}); \tau))_{\tau \in [0, t]}\) satisfies
\begin{equation}
\label{eq:ref-ineq-hf-convex}
f(X_{t}) - f(z) \leq \frac{2}{3}(f(X_{0}) - f(z)) - \frac{1}{6}\frac{\rmd^{2}}{\rmd t^{2}}\|X_{t} - z\|^{2}~.
\end{equation}
\end{lemma}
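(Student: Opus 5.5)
Compute the second derivative of $\|X_t - z\|^2$ along the flow:
\[
\tfrac{\rmd}{\rmd t}\|X_t - z\|^2 = 2\langle Y_t, X_t - z\rangle, \qquad \tfrac{\rmd^2}{\rmd t^2}\|X_t - z\|^2 = 2\|Y_t\|^2 - 2\langle \nabla f(X_t), X_t - z\rangle .
\]
Both terms on the right can be bounded by function values.

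For the kinetic term, I will use \Cref{lem:hamflow-conserve} with $Y_0 = \bm{0}$. This gives $\|Y_t\|^2 = 2(f(X_0) - f(X_t))$.

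For the gradient term, I will use \convexassump{} at the pair $(X_t, z)$:
\[
\langle \nabla f(X_t), X_t - z\rangle \ge f(X_t) - f(z).
\]

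Substituting both into the second-derivative identity gives
\[
\tfrac{\rmd^2}{\rmd t^2}\|X_t - z\|^2 \le 4(f(X_0) - f(X_t)) - 2(f(X_t) - f(z)).
\]

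It remains to rearrange. Write $f(X_0) - f(X_t) = (f(X_0) - f(z)) - (f(X_t) - f(z))$, so the right-hand side becomes $4(f(X_0) - f(z)) - 6(f(X_t) - f(z))$. Dividing by $6$ and moving terms across yields
\[
f(X_t) - f(z) \le \tfrac{2}{3}(f(X_0) - f(z)) - \tfrac{1}{6}\tfrac{\rmd^2}{\rmd t^2}\|X_t - z\|^2,
\]
which is exactly \eqref{eq:ref-ineq-hf-convex}.

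The argument has no real obstacle. The only care needed is regularity: $\|X_t - z\|^2$ must be twice differentiable, which holds because $X_t$ is $C^2$ whenever a solution of \ref{eq:HamFlow} exists. The signs also need checking: the kinetic term enters with a plus sign and is controlled by energy conservation, while the gradient term enters with a minus sign and is controlled by convexity in the favorable direction. Smoothness is not needed for this lemma beyond existence of the flow.
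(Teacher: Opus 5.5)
Your proposal is correct and follows the paper's proof essentially step for step. You compute $\frac{\rmd^{2}}{\rmd t^{2}}\|X_t - z\|^2 = 2\|Y_t\|^2 - 2\langle \nabla f(X_t), X_t - z\rangle$ (the paper phrases this as $\dot h(t)$ with $h(t) = \langle X_t - z, Y_t\rangle$), replace $\|Y_t\|^2$ by $2(f(X_0) - f(X_t))$ using energy conservation with $Y_0 = \bm{0}$, bound the gradient term by convexity, and rearrange.
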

\begin{proof}
Consider \(h(t) = \langle X_{t} - z, Y_{t}\rangle = \frac{1}{2} \frac{\rmd}{\rmd t}\|X_{t} - z\|^{2}\).
We have
\begin{align*}
    \frac{1}{2} \frac{\rmd^{2}}{\rmd t^{2}} \|X_{t} - z\|^{2} = \dot{h}(t) &\overset{(a)}= \|Y_{t}\|^{2} - \langle X_{t} - z, \nabla f(X_{t})\rangle \\
    &\overset{(b)}= 2(f(X_{0}) - f(X_{t})) - \langle X_{t} - z, \nabla f(X_{t})\rangle \\
    &\overset{(c)}\leq 2(f(X_{0}) - f(X_{t})) - (f(X_{t}) - f(z)) \\
    &= -3(f(X_{t}) - f(z)) + 2(f(X_{0}) - f(z))~.
\end{align*}
Step \((a)\) uses the Hamiltonian flow \(\dot{X}_{t} = Y_{t}, \dot{Y}_{t} = -\nabla f(X_{t})\) and the chain rule; step \((b)\) uses the fact that the Hamiltonian is conserved for any \(t \geq 0\); and step \((c)\) uses the convexity of \(f\)~.
\end{proof}
For any \(T > 0\), given the Hamiltonian flow trajectory $(X_{t}, Y_{t}) = \HF((X_{0}, \bm{0}); t)$ for $0 \le t \le T$, we define the \textit{weighted average}:
\begin{equation}\label{eq:WeightedAvg}
    X^{\mathrm{avg}}(X_{0}; T) = \frac{2}{T^{2}}\int_{0}^{T}(T - t)X_{t}\,\rmd t\,.
\end{equation}
An important corollary of \Cref{lem:hf-convex-main} is stated below, which we prove in \cref{prf:cor:hf-convex-main}.
\begin{corollary}
\label{cor:hf-convex-main}
Consider the setting of \Cref{lem:hf-convex-main}.
The weighted average \(X^{\mathrm{avg}}(X_{0}; T)\) satisfies
\begin{equation}
\label{eq:hf-convex-weighted-avg}
    f(X^{\mathrm{avg}}(X_{0}; T)) - f(z) + \frac{1}{3T^{2}} \|X_{T} - z\|^{2} \leq \frac{2}{3} (f(X_{0}) - f(z)) + \frac{1}{3T^{2}} \|X_{0} - z\|^{2}\,.
\end{equation}
\end{corollary}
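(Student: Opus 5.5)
The plan is to integrate the differential inequality of \Cref{lem:hf-convex-main} against the weight \((T - t)\) on \([0, T]\) and then use Jensen's inequality to pass from the weighted time average of \(f(X_t)\) to \(f\) evaluated at the weighted average position.

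First, multiply \eqref{eq:ref-ineq-hf-convex} by \(\frac{2}{T^2}(T-t) \geq 0\) and integrate over \(t \in [0,T]\). Since \(\frac{2}{T^2}\int_0^T (T-t)\,\rmd t = 1\), the constant term \(\frac{2}{3}(f(X_0) - f(z))\) passes through unchanged. The left side becomes \(\frac{2}{T^2}\int_0^T (T-t)(f(X_t) - f(z))\,\rmd t\). Because the weight \(\frac{2}{T^2}(T-t)\) is a probability density on \([0,T]\) and \(f\) is convex (\convexassump{}), Jensen's inequality bounds this from below by \(f(X^{\mathrm{avg}}(X_0;T)) - f(z)\).

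Next, write \(\phi(t) = \|X_t - z\|^2\) and handle the term \(-\frac{1}{6}\cdot\frac{2}{T^2}\int_0^T (T-t)\ddot\phi(t)\,\rmd t\) by integrating by parts twice:
\[
\int_0^T (T-t)\ddot\phi(t)\,\rmd t = \big[(T-t)\dot\phi(t)\big]_0^T + \int_0^T \dot\phi(t)\,\rmd t = -T\dot\phi(0) + \phi(T) - \phi(0).
\]
The boundary term vanishes because \(\dot\phi(0) = 2\langle X_0 - z, Y_0\rangle = 0\), using \(Y_0 = \bm{0}\). The contribution is therefore \(-\frac{1}{3T^2}(\|X_T - z\|^2 - \|X_0 - z\|^2)\). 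Moving \(\frac{1}{3T^2}\|X_T - z\|^2\) to the left side gives exactly \eqref{eq:hf-convex-weighted-avg}.

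The main point needing care is regularity. The inequality involves \(\ddot\phi\), and integration by parts requires \(\phi\) to be \(C^2\). This holds because \(\dot\phi = 2\langle X_t - z, Y_t\rangle\) and \(\dot Y_t = -\nabla f(X_t)\) is continuous whenever the flow exists; this is implicit in \Cref{lem:hf-convex-main}. The other step to get right is choosing the weight \(T - t\), which is precisely what makes the \(\dot\phi(0)\) boundary term appear and vanish by the zero initial velocity.
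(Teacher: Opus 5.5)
Your proposal is correct and takes essentially the same route as the paper: multiply \eqref{eq:ref-ineq-hf-convex} by the density $\frac{2}{T^2}(T-t)$, apply Jensen, and integrate by parts. The paper integrates by parts once against $h(t)=\frac12\dot\phi(t)$ and then uses $\int_0^T h(t)\,\rmd t=\frac12(\phi(T)-\phi(0))$, which is the same as your two integrations by parts.

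One small correction to your closing remark. The $\dot\phi(0)$ boundary term appears for any weight and vanishes by $Y_0=\bm{0}$ regardless. What the choice $T-t$ actually buys is that the weight vanishes at $t=T$, which kills the sign-indefinite $\dot\phi(T)$ term. Its constant derivative then leaves exactly $\phi(T)-\phi(0)$.
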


\Cref{cor:hf-convex-main} implies that optimality gap of the time-average of the solution to the Hamiltonian flow can be controlled with respect to the optimality gap at the initialization, with a \textit{universal} contraction factor $\frac{2}{3}$.
This key property allows us to leverage the Hamiltonian flow as a subroutine to design \emph{deterministic} algorithms for convex optimization with accelerated rates.
Importantly, when \(z \leftarrow x^{\star}\) and \(T \geq \sqrt{\frac{2\|X_{0} - \xstar\|^{2}}{f(X_{0}) - f(\xstar)}}\)\,, from the bound above we obtain
\begin{equation*}
    f(X^{\mathrm{avg}}(X_{0}; T)) - f(\xstar) \leq \frac{5}{6} (f(X_{0}) - f(\xstar))~.
\end{equation*}

\begin{remark}
\label{rmk:stronger}
A corollary of \Cref{lem:hf-convex-main} that we show en route proving \Cref{cor:hf-convex-main} is
\begin{equation}
\label{eq:lem:hf-convex-main-stronger}
    \frac{2}{T^{2}}\int_{0}^{T} (T- t) (f(X_{t}) - f(z)) \rmd t + \frac{1}{3T^{2}} \|X_{T} - z\|^{2} \leq \frac{2}{3} (f(X_{0}) - f(z)) + \frac{1}{3T^{2}} \|X_{0} - z\|^{2}~.
\end{equation}
Suppose \(\sft\) is distributed according to density \(p_{\sft}(t) = \frac{2}{T^{2}}(T - t)\) over \([0, T]\).
Then, \cref{eq:lem:hf-convex-main-stronger} implies
\begin{equation*}
    \bbE_{\sft \sim p_{\sft}}\left[f(X_{\sft}) - f(z)\right] + \frac{1}{3T^{2}} \|X_{T} - z\|^{2} \leq \frac{2}{3} (f(X_{0}) - f(z)) + \frac{1}{3T^{2}} \|X_{0} - z\|^{2}~.
\end{equation*}
When \(T \geq \sqrt{\frac{2\|X_{0} - \xstar\|^{2}}{f(X_{0}) - f(\xstar)}}\), the above inequality implies that \(f(X_{\sft}) - f(\xstar) \leq \frac{5}{6}(f(X_{0}) - f(\xstar))\) 
in expectation, taken over \(\sft \sim p_{\sft}\).
This novel randomized scheme is fundamentally different from the randomized Hamiltonian flow scheme studied in \citet{fu2025hamiltoniandescentalgorithmsoptimization}.
\end{remark}

Additionally, \Cref{cor:hf-convex-main} can be generalized to different weighting schemes like the simple average; we elaborate on this in \Cref{app:hf-convex-main-other-weights}.
\subsection{Provable minimization of \texorpdfstring{\(\alpha\)-strongly convex \(f\)}{α-strongly convex f}}

We begin by stating an implication of \Cref{cor:hf-convex-main} for functions satisfying \(\alpha\)-quadratic growth.
\begin{lemma}
\label{lem:hf-strong-convex-main}
Assume \(f\) satisfies \emph{\convexassump{}} and \emph{\quadgrowassump{}}.
For any \(T > 0\), the weighted average \(X^{\mathrm{avg}}(X_{0}; T)\) defined in \cref{eq:WeightedAvg} satisfies
\begin{equation*}
    f(X^{\mathrm{avg}}(X_{0}; T)) - f(\xstar) \leq \frac{2}{3}\lp1 + \frac{1}{\alpha T^{2}}\rp (f(X_{0}) - f(\xstar))~.
\end{equation*}
\end{lemma}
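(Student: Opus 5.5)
The plan is to apply \Cref{cor:hf-convex-main} with the reference point \(z \leftarrow \xstar\) and then use \quadgrowassump{} to absorb the distance term on the right-hand side into the initial optimality gap.

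Concretely, \Cref{cor:hf-convex-main} with \(z = \xstar\) gives
\begin{equation*}
    f(X^{\mathrm{avg}}(X_{0}; T)) - f(\xstar) + \frac{1}{3T^{2}} \|X_{T} - \xstar\|^{2} \leq \frac{2}{3} (f(X_{0}) - f(\xstar)) + \frac{1}{3T^{2}} \|X_{0} - \xstar\|^{2}\,.
\end{equation*}
The first step is to drop the nonnegative term \(\frac{1}{3T^{2}}\|X_{T} - \xstar\|^{2}\) on the left. The second step is to invoke \quadgrowassump{} at \(x = X_{0}\), which gives \(\|X_{0} - \xstar\|^{2} \leq \frac{2}{\alpha}(f(X_{0}) - f(\xstar))\). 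Hence
\begin{equation*}
    \frac{1}{3T^{2}}\|X_{0} - \xstar\|^{2} \leq \frac{2}{3\alpha T^{2}}(f(X_{0}) - f(\xstar))\,.
\end{equation*}
Substituting this into the right-hand side and factoring out \(\frac{2}{3}(f(X_{0}) - f(\xstar))\) yields exactly the claimed bound \(\frac{2}{3}\bigl(1 + \frac{1}{\alpha T^{2}}\bigr)(f(X_{0}) - f(\xstar))\).

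There is no genuine obstacle here. The only step needing care is checking that \Cref{cor:hf-convex-main} applies with an arbitrary reference point, and in particular with \(z = \xstar\); its hypotheses require only \convexassump{}, which is assumed. The quadratic growth constant enters only through the initial distance bound.
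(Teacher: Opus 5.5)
Your proposal is correct and matches the paper's proof: apply \Cref{cor:hf-convex-main} with $z = x^{\star}$, drop the nonnegative $\frac{1}{3T^{2}}\|X_{T} - x^{\star}\|^{2}$ term, and bound $\|X_{0} - x^{\star}\|^{2} \leq \frac{2}{\alpha}(f(X_{0}) - f(x^{\star}))$ via quadratic growth. The arithmetic $\frac{1}{3T^{2}} \cdot \frac{2}{\alpha} = \frac{2}{3} \cdot \frac{1}{\alpha T^{2}}$ then gives exactly the stated bound.
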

This is a direct consequence of the quadratic growth condition of \(f\); the complete proof is given in \Cref{app:prf:hf-strong-convex-main}.
Algorithmically, this lemma conveys that when given access to the Hamiltonian flow trajectory, aggregating the position according to the average \(X^{\mathrm{avg}}(X_{0}; T)\) for sufficiently long time results in an exponential contraction of the optimality gap.
This suggests the algorithm \nameref{alg:phfopt} presented in~\Cref{alg:phfopt} will converge exponentially fast; 
we present the formal guarantee in the following theorem.
We note for a strongly convex objective function, we do not need any additional averaging (i.e., we set $\lambda = 0$ in~\Cref{alg:phfopt}).

\begin{theorem}
\label{thm:hf-cts-strong-cvx}
Assume that \(f\) satisfies \emph{\convexassump{}} and \emph{\quadgrowassump{}}.
Let \(X_{K}\) be the output of \emph{\nameref{alg:phfopt}} (\Cref{alg:phfopt}) for \(K\) iterations with parameter \(\lambda = 0\) and the integration time sequence \(T_{k} = T \geq \frac{C}{\sqrt{\alpha}}\) for all \(k \in [K]\), for some constant $C > 0$.
Then, 
\begin{equation*}
    f(X_{K}) - f(\xstar) \leq \left(\frac{2}{3} + \frac{2}{3C^{2}}\right)^{K} (f(X_{0}) - f(\xstar))~.
\end{equation*}
\end{theorem}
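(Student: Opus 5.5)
The proof is a direct iteration of \Cref{lem:hf-strong-convex-main}. With \(\lambda = 0\), the update in \Cref{alg:phfopt} reduces to \(X_{k} = X^{\mathrm{avg}}(X_{k-1}; T_{k})\). This is exactly the weighted average in \cref{eq:WeightedAvg}, computed along the Hamiltonian flow started from \((X_{k-1}, \bm{0})\).

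At iteration \(k\), the hypotheses of \Cref{lem:hf-strong-convex-main} hold with initial position \(X_{k-1}\) and zero initial velocity, since \convexassump{} and \quadgrowassump{} are properties of \(f\) alone. Applying the lemma with \(X_{0} \leftarrow X_{k-1}\) and \(T \leftarrow T_{k} = T\) gives
\[
f(X_{k}) - f(\xstar) \leq \frac{2}{3}\lp 1 + \frac{1}{\alpha T^{2}} \rp (f(X_{k-1}) - f(\xstar))\,.
\]

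Next, the assumption \(T \geq C/\sqrt{\alpha}\) gives \(\alpha T^{2} \geq C^{2}\), so \(\frac{1}{\alpha T^{2}} \leq \frac{1}{C^{2}}\). The contraction factor is therefore at most \(\frac{2}{3} + \frac{2}{3C^{2}}\). This factor is positive and does not depend on \(k\).

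Finally, the optimality gaps \(f(X_{k}) - f(\xstar)\) are nonnegative because \(\xstar\) is the minimizer. We can therefore chain the one-step inequalities by induction over \(k = 1, \dots, K\), multiplying the factors together, and obtain the stated bound. If \(C \leq \sqrt{2}\) the factor is at least \(1\) and the bound is vacuous, but it remains true.

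There is no real obstacle here. The substantive work is in \Cref{lem:hf-strong-convex-main}, which we take as given. The only points needing care are that each iteration restarts the flow with zero velocity, which is what the lemma requires, and the nonnegativity of the gaps, which justifies chaining the inequalities.
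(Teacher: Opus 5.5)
Your proposal is correct and matches the paper's proof: with $\lambda = 0$ the update is $X_k = X^{\mathrm{avg}}(X_{k-1}; T_k)$, and applying \Cref{lem:hf-strong-convex-main} with $\alpha T^2 \geq C^2$ gives the one-step contraction by $\frac{2}{3} + \frac{2}{3C^2}$. Iterating this over $k = 1, \dots, K$ yields the bound, and your remark on nonnegativity of the gaps justifies the chaining, which the paper leaves implicit.
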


When \(C > \sqrt{2}\), we have \(\frac{2}{3} + \frac{2}{3C^{2}} < 1\).
Moreover, when \(C = 2\), the lower bound for \(T\) in \Cref{thm:hf-cts-strong-cvx} implies, by the \(\alpha\)-quadratic growth condition of $f$, the choice \(T \geq \sqrt{\frac{4}{\alpha}} \geq \sqrt{\frac{2\|X_{0} - x^{\star}\|^{2}}{f(X_{0}) - f(x^{\star})}}\) discussed previously.
From this theorem, we obtain a continuous-time convergence guarantee: an $\varepsilon$-accurate point can be obtained with total integration time scaling as $\alpha^{-1/2}\log(1/\varepsilon)$.
\begin{corollary}
\label{cor:hf-cts-strong-cvx}
Assume that \(f\) satisfies \emph{\convexassump{}} and \emph{\quadgrowassump{}}.
To obtain \(X_{K}\) that is an \(\varepsilon\)-accurate minimizer, it suffices to run \emph{\CONTALG{}} with parameter \(\lambda = 0\) for \(K\) iterations and constant integration times \(\{T_{k}\}_{k=1}^{K}\) where
\begin{equation*}
    K = \left\lceil4 \log \frac{f(X_{0}) - f(x^{\star})}{\varepsilon}\right\rceil~; \quad T_{k} = \frac{5}{2\sqrt{\alpha}} ~~\forall ~k \in [K]~.
\end{equation*}
The total integration time is $T_{\mathrm{tot}} = \frac{5}{2\sqrt{\alpha}} \left\lceil 4 \log \frac{f(X_{0}) - f(x^{\star})}{\varepsilon}\right\rceil~.$
\end{corollary}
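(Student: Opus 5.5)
The plan is to apply \Cref{thm:hf-cts-strong-cvx} with the constant $C = \nicefrac{5}{2}$, since $T_k = \frac{5}{2\sqrt{\alpha}} = \frac{C}{\sqrt{\alpha}}$ meets the theorem's requirement on the integration times. With $\lambda = 0$ the theorem gives
\begin{equation*}
    f(X_{K}) - f(\xstar) \leq \rho^{K} \, (f(X_{0}) - f(\xstar))~, \qquad \rho := \frac{2}{3} + \frac{2}{3C^{2}} = \frac{2}{3} + \frac{8}{75} = \frac{58}{75}~.
\end{equation*}
The rest of the argument only needs to show that $\rho^K (f(X_0)-f(\xstar)) \le \varepsilon$ for the stated $K$.

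To do this, I bound $\rho^K \le \exp(-K(1-\rho))$ using $1 - u \le e^{-u}$. Here $1 - \rho = \nicefrac{17}{75} \geq \nicefrac{1}{4.5}$, so it suffices that $K(1-\rho) \geq \log\frac{f(X_0)-f(\xstar)}{\varepsilon}$. The stated $K = \lceil 4\log\frac{f(X_0)-f(\xstar)}{\varepsilon}\rceil$ would need $1-\rho \ge \nicefrac14$, which fails since $\nicefrac{17}{75} < \nicefrac14$. So the crude bound is too weak, and instead I use $\log(1/\rho) = \log(75/58) \approx 0.257 \geq \nicefrac{1}{4}$. This gives
\begin{equation*}
    \rho^{K} = \exp(-K\log(1/\rho)) \leq \exp(-K/4) \leq \frac{\varepsilon}{f(X_{0}) - f(\xstar)}~.
\end{equation*}
The first inequality uses $\log(1/\rho) \ge \nicefrac14$, and the second uses $K \geq 4\log\frac{f(X_0)-f(\xstar)}{\varepsilon}$. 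This yields $f(X_K) - f(\xstar) \le \varepsilon$. If $\varepsilon \ge f(X_0)-f(\xstar)$, the claim holds trivially because \CONTALG{} with $\lambda=0$ is non-increasing in function value, or simply because $\rho^K \le 1$.

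The total integration time is then $\sum_{k=1}^K T_k = K \cdot \frac{5}{2\sqrt{\alpha}}$, which is the stated expression. The only delicate step is the numerical check $\log(75/58) \geq \nicefrac14$. Equivalently, $e^{1/4} \le 75/58 \approx 1.2931$, and this holds since $e^{1/4} \approx 1.2840$. I would certify it rigorously with the series bound $e^{1/4} \le 1 + \frac14 + \frac{1}{32} + \frac{1}{384}\cdot\frac{1}{1-1/4} < 1.285$.
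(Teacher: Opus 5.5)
Your proposal is correct and follows the paper's proof essentially verbatim. Both apply \Cref{thm:hf-cts-strong-cvx} with $C = \nicefrac{5}{2}$ to get the factor $\nicefrac{58}{75}$, use $4 \geq (\log\frac{75}{58})^{-1}$ to conclude $f(X_K) - f(\xstar) \leq \varepsilon$, and sum the constant integration times. Your series-bound certification of $\log\frac{75}{58} \geq \nicefrac14$ and your separate treatment of the case $\varepsilon \geq f(X_0) - f(\xstar)$ only make rigorous two points the paper leaves implicit.
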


The proofs of the above theorem and corollary are stated in \Cref{app:prf:hf-cts-strong-cvx}.

\subsection{Provable minimization of convex \texorpdfstring{\(f\)}{f}}
We revisit \cref{eq:hf-convex-weighted-avg} and the choice  \(\sqrt{\frac{2\|X_{0} - \xstar\|^{2}}{f(X_{0}) - f(\xstar)}}\) which is sufficient for a contraction in the optimality gap.
When \(f\) is convex, this ratio can be arbitrarily large, and therefore optimizing \(f\) using the averaged iterate can appear futile even when using exact Hamiltonian flow.
We note that this lower bound for \(T\) stems from disregarding the \(\frac{1}{3T^{2}}\|X_{T} - \xstar\|^{2}\) term in \cref{eq:hf-convex-weighted-avg}.
On the other hand, by including this term, it is unclear if there exists some \(\widehat{X}\) that implies a decrease in optimality gap; this is due to the mismatch between \(X^{\mathrm{avg}}(X_{0}; T)\) passed to \(f\) and the \(X_{T}\) in the squared norm term.
Interestingly, by using an additional averaging with the last iterate, we derive a version of \cref{eq:hf-convex-weighted-avg} that explicitly defines a \(\widehat{X}\) that leads to actionable algorithms in the convex setting.
\begin{lemma}
\label{lem:hf-convex-mix-main}
Let \(f\) satisfy \emph{\convexassump{}}.
For any initial~position~\(X_{0} \in \bbR^{d}\),~time~\(T > 0\),~and~parameter~\(\lambda > 0\),~define
\begin{equation*}
    X^{\mathrm{mix}}(X_{0}; T) = \frac{\lambda}{\lambda + 1} X_{T} + \frac{1}{1 + \lambda} X^{\mathrm{avg}}(X_{0}; T)~.
\end{equation*}
Then, defining \(C_{\lambda} = \frac{(3\lambda + 1)^{2}}{9\lambda(1 + \lambda)}\)\,, we have
\begin{equation*}
    f(X^{\mathrm{mix}}(X_{0}; T)) - f(\xstar) + \frac{1}{3\lambda T^{2}} \|X^{\mathrm{mix}}(X_{0}; T) - \xstar\|^{2} \leq C_{\lambda} (f(X_{0}) - f(\xstar)) + \frac{1}{3\lambda T^{2}} \|X_{0} - \xstar\|^{2}\,.
\end{equation*}
\end{lemma}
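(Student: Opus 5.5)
The plan is to use \Cref{cor:hf-convex-main} with a \emph{tunable} reference point rather than with $z=\xstar$ directly, and then to combine it with convexity of $f$ and energy conservation (\Cref{lem:hamflow-conserve}).

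\textbf{Step 1: an exact form of the corollary.} Since $Y_0=\bm 0$, the function $g(t)=\|X_t-z\|^2$ has $g'(0)=0$. Integration by parts therefore gives exactly $\frac{2}{T^2}\int_0^T (T-t)\,\tfrac12 g''(t)\,\rmd t = \frac{1}{T^2}(g(T)-g(0))$. Integrating \Cref{lem:hf-convex-main} against the weight $\frac{2}{T^2}(T-t)$ and using Jensen on the left side then yields, for every $z\in\bbR^d$,
\begin{equation*}
f(X^{\mathrm{avg}}) + \frac{1}{3T^2}\bigl(\|X_T-z\|^2-\|X_0-z\|^2\bigr) \le \frac23 f(X_0) + \frac13 f(z).
\end{equation*}
This is the form used in the proof of \Cref{cor:hf-convex-main}. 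Expanding, $\|X_T-z\|^2-\|X_0-z\|^2 = \|X_T\|^2-\|X_0\|^2 - 2\langle X_T-X_0, z\rangle$, which is affine in $z$. Now take $z = (1-\theta)X_0 + \theta \xstar$ for some $\theta\in[0,1]$ to be chosen. By convexity, $f(z)\le (1-\theta) f(X_0)+\theta f(\xstar)$, and this upper bound on $f(z)$ enters on the correct side of the inequality.

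\textbf{Step 2: bounding $f(X^{\mathrm{mix}})$.} By convexity,
\begin{equation*}
f(X^{\mathrm{mix}})\le \frac{\lambda}{1+\lambda}f(X_T)+\frac{1}{1+\lambda}f(X^{\mathrm{avg}}),
\end{equation*}
and by energy conservation with $Y_0=\bm 0$ we have $f(X_T)\le f(X_0)$. Inserting the bound from Step 1 for $f(X^{\mathrm{avg}})$ gives $f(X^{\mathrm{mix}})-f(\xstar)$ bounded by a multiple of $f(X_0)-f(\xstar)$ (depending on $\theta,\lambda$), minus $\frac{1}{3(1+\lambda)T^2}\bigl(\|X_T-z\|^2-\|X_0-z\|^2\bigr)$. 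Writing $d:=X_T-X_0$ and $e:=X_0-\xstar$, this negative quadratic term is explicit in $d$, $e$ and $\theta$.

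\textbf{Step 3: absorbing $\|X^{\mathrm{mix}}-\xstar\|^2$.} The remaining task is to show that $\frac{1}{3\lambda T^2}\|X^{\mathrm{mix}}-\xstar\|^2$, minus the negative quadratic from Step 2, is at most $\frac{1}{3\lambda T^2}\|e\|^2$ plus a leftover multiple of $f(X_0)-f(\xstar)$. That leftover, together with the multiple from Step 2, must total $C_\lambda$.

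A useful sanity check on the target: with $\theta=1$ the Step 2 coefficient is $\frac{3\lambda+2}{3(1+\lambda)}$, and
\begin{equation*}
C_\lambda=\frac{3\lambda+2}{3(1+\lambda)}+\frac{1}{9\lambda(1+\lambda)}.
\end{equation*}
So the slack available for the quadratic absorption is exactly $\frac{1}{9\lambda(1+\lambda)}(f(X_0)-f(\xstar))$. This strongly suggests that the proof reduces to completing a square, with parameters $\lambda$, $\theta$ and the $\tfrac13$ from the lemma, whose optimal value produces the factor $\frac{(3\lambda+1)^2}{9\lambda(1+\lambda)}$.

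\textbf{Main obstacle.} The difficulty lies in Step 3: $X^{\mathrm{mix}}-\xstar = e + \frac{\lambda}{1+\lambda}d + \frac{1}{1+\lambda}(X^{\mathrm{avg}}-X_0)$ contains the averaged displacement $X^{\mathrm{avg}}-X_0$, which the quadratic terms from Step 1 do not control directly. My first attempt would be to use Jensen, $\|X^{\mathrm{avg}}-\xstar\|^2\le \frac{2}{T^2}\int_0^T(T-t)\|X_t-\xstar\|^2\,\rmd t$, and then bound each $\|X_t-\xstar\|^2$ by applying the integrated form of \Cref{lem:hf-convex-main} on subintervals, or by using $\frac{\rmd^2}{\rmd t^2}\|X_t-\xstar\|^2\le 4(f(X_0)-f(\xstar))$. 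The latter follows from the proof of \Cref{lem:hf-convex-main} together with $f(X_t)\ge f(\xstar)$; it shows $\|X_t-\xstar\|^2$ is at most its initial value plus $2t^2(f(X_0)-f(\xstar))$. That curvature bound is exactly the kind of $T^2(f(X_0)-f(\xstar))$ term that, after division by $3\lambda T^2$, fits the $\frac{1}{9\lambda(1+\lambda)}$ slack. I would then tune $\theta$, or a Young's-inequality weight, so that the constants match.
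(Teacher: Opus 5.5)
Your ingredients are exactly the paper's: \Cref{cor:hf-convex-main} at $z=\xstar$ (your $\theta=1$), $f(X_T)\le f(X_0)$, and the curvature bound $\|X_t-\xstar\|^2\le\|X_0-\xstar\|^2+2t^2(f(X_0)-f(\xstar))$. However, Step~3 carries the whole difficulty and is left open. The way you frame it, expanding $X^{\mathrm{mix}}-\xstar$ in the basis $e$, $d$, $X^{\mathrm{avg}}-X_0$ and then tuning $\theta$ or a Young weight, hides the one step that closes it.

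The quantities you actually control are $\|X_T-\xstar\|^2$ (from Step~1) and $\|X^{\mathrm{avg}}-\xstar\|^2$ (from the curvature bound). So write $X^{\mathrm{mix}}-\xstar=\frac{\lambda}{1+\lambda}(X_T-\xstar)+\frac{1}{1+\lambda}(X^{\mathrm{avg}}-\xstar)$. Then use convexity of $\|\cdot\|^2$ on this same combination, i.e.\ drop the nonnegative term $\frac{\lambda}{(1+\lambda)^2}\|X_T-X^{\mathrm{avg}}\|^2$:
\[
\frac{1}{3\lambda T^2}\|X^{\mathrm{mix}}-\xstar\|^2\le\frac{1}{3(1+\lambda)T^2}\|X_T-\xstar\|^2+\frac{1}{3\lambda(1+\lambda)T^2}\|X^{\mathrm{avg}}-\xstar\|^2 .
\]
The first term cancels \emph{exactly} the negative quadratic from your Step~2. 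This requires the corollary to be centered at $\xstar$, so $\theta=1$ is forced rather than tuned. For $\theta<1$ the Step~1 quadratic is $\|X_T-z\|^2$ with $z\neq\xstar$, and it no longer matches.

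For the second term, integrate your curvature bound against $\frac{2}{T^2}(T-t)$ and apply Jensen. Since $\int_0^T\frac{2(T-t)}{T^2}\,2t^2\,\rmd t=\frac{T^2}{3}$, this gives
\[
\|X^{\mathrm{avg}}-\xstar\|^2\le\|X_0-\xstar\|^2+\frac{T^2}{3}\bigl(f(X_0)-f(\xstar)\bigr).
\]
It therefore contributes $\frac{1}{3\lambda(1+\lambda)T^2}\|X_0-\xstar\|^2+\frac{1}{9\lambda(1+\lambda)}(f(X_0)-f(\xstar))$. Add the $+\frac{1}{3(1+\lambda)T^2}\|X_0-\xstar\|^2$ left over from Step~2. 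The distance terms then sum to exactly $\frac{1}{3\lambda T^2}\|X_0-\xstar\|^2$. The function-value coefficient is $\frac{3\lambda+2}{3(1+\lambda)}+\frac{1}{9\lambda(1+\lambda)}=C_\lambda$, which is precisely the slack you computed. No Young's inequality or parameter search is needed.

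Completed this way, your argument is the paper's proof reorganized. The paper forms $\lambda\cdot$(\Cref{cor:hf-convex-main} at $\xstar$) $+$ (the bound on $\|X^{\mathrm{avg}}-\xstar\|^2$) $+\,\lambda^2\cdot\bigl(f(X_T)\le f(X_0)\bigr)$. It then applies Jensen to both $f$ and $\|\cdot\|^2$ on the left-hand side.
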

\begin{prfsketch}
By the properties of the Hamiltonian flow,
\begin{subequations}
    \begin{align}
        f(X^{\mathrm{avg}}(X_{0}; T)) - f(\xstar) + \frac{1}{3T^{2}}\|X_{T} - \xstar\|^{2} &\leq \frac{2}{3}(f(X_{0}) - f(\xstar)) + \frac{1}{3T^{2}}\|X_{0} - \xstar\|^{2} \label{eq:cor:hf-convex-main} \\
        \frac{1}{3T^{2}}\|X^{\mathrm{avg}}(X_{0}; T) - \xstar\|^{2} &\leq \frac{1}{9}(f(X_{0}) - f(\xstar)) + \frac{1}{3T^{2}}\|X_{0} - \xstar\|^{2} \label{eq:cor:hf-convex-main-2} \\
        f(X_{T}) - f(\xstar) &\leq f(X_{0}) - f(\xstar)~.\label{eq:hf-conserve}
    \end{align}
\end{subequations}
We simplify the weighted sum \(\lambda \cdot \text{\cref{eq:cor:hf-convex-main}} + \text{\cref{eq:cor:hf-convex-main-2}} + \lambda^{2} \cdot \text{\cref{eq:hf-conserve}}\), and use Jensen's inequality with the convexity of \(f\) and \(\|\cdot\|^{2}\) to complete the proof.
\end{prfsketch}

We give the complete proof in Appendix \ref{prf:lem:hf-convex-mix-main}.
We highlight that \(C_{\lambda} < 1\) for all \(\lambda > \frac{1}{3}\); $C_\lambda$ is minimized at \(\lambda = 1\), where it takes the value \(C_1 = \frac{8}{9}\); and \(X^{\mathrm{mix}}\) for \(\lambda = 1\) is a uniform average between \(X_{T}\) and \(X^{\mathrm{avg}}(X_{0}; T)\).
The above lemma implies a convergence guarantee for minimizing convex functions using \nameref{alg:phfopt}, provided the integration times increase suitably at each iteration, and we output $X^{\mathrm{mix}}$ as the next iterate (which is the reason we introduce the parameter $\lambda$ in~\nameref{alg:phfopt}).

\begin{theorem}
\label{thm:hf-cts-cvx}
Assume \(f\) satisfies \emph{\convexassump{}}.
Suppose \(X_{K}\) is the output of \emph{\nameref{alg:phfopt}} after \(K\) iterations with parameter \(\lambda = 1\) and integration time sequence satisfying \(T_{0} = 1\) and \(T_{k}^{2} \geq \frac{9}{8}T_{k - 1}^{2}\) for all \(k \in [K]\).
Then,
\begin{equation*}
    f(X_{K}) - f(x^{\star}) \leq \left(\frac{8}{9}\right)^{K} \left(f(X_{0}) - f(x^{\star}) + \frac{1}{3}\|X_{0} - x^{\star}\|^{2}\right)~.
\end{equation*}
\end{theorem}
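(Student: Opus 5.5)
The plan is a Lyapunov-potential argument built on \Cref{lem:hf-convex-mix-main} with \(\lambda = 1\). In that case \(C_{1} = \frac{8}{9}\) and \(X_{k} = X^{\mathrm{mix}}(X_{k-1}; T_{k})\). Define, for \(k \geq 0\),
\begin{equation*}
    \Phi_{k} = f(X_{k}) - f(\xstar) + \frac{1}{3 T_{k}^{2}} \|X_{k} - \xstar\|^{2}~,
\end{equation*}
with the convention \(T_{0} = 1\), so that \(\Phi_{0}\) equals the bracketed quantity on the right-hand side of the theorem. I will show \(\Phi_{k} \leq \frac{8}{9}\Phi_{k-1}\) for every \(k \in [K]\). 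Unrolling this recursion and dropping the nonnegative distance term in \(\Phi_{K}\) then gives \(f(X_{K}) - f(\xstar) \leq \Phi_{K} \leq (\frac{8}{9})^{K}\Phi_{0}\), which is the claim.

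For the one-step contraction, apply \Cref{lem:hf-convex-mix-main} with \(\lambda = 1\), initial position \(X_{k-1}\), and time \(T_{k}\). This gives
\begin{equation*}
    \Phi_{k} \leq \frac{8}{9}\big(f(X_{k-1}) - f(\xstar)\big) + \frac{1}{3 T_{k}^{2}} \|X_{k-1} - \xstar\|^{2}~.
\end{equation*}
The growth condition \(T_{k}^{2} \geq \frac{9}{8} T_{k-1}^{2}\) means \(\frac{1}{T_{k}^{2}} \leq \frac{8}{9} \cdot \frac{1}{T_{k-1}^{2}}\). The distance term is therefore at most \(\frac{8}{9} \cdot \frac{1}{3T_{k-1}^{2}}\|X_{k-1} - \xstar\|^{2}\), and so the whole right-hand side is at most \(\frac{8}{9}\Phi_{k-1}\).

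There is essentially no obstacle beyond this bookkeeping. The one point to check is the base case. The potential \(\Phi_{0}\) uses \(T_{0} = 1\), which is a convention and not an actual integration time; the algorithm only uses \(T_{1}, \dots, T_{K}\). The first step \(k = 1\) still goes through because \(T_{1}^{2} \geq \frac{9}{8}T_{0}^{2} = \frac{9}{8}\), which is exactly what the contraction argument needs. Everything else, including the convexity and Jensen arguments, is already packaged inside \Cref{lem:hf-convex-mix-main}.
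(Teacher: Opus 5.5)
Your proposal is correct and matches the paper's proof. The paper applies \Cref{lem:hf-convex-mix-main} with $\lambda = 1$ (so $C_1 = \frac{8}{9}$), uses $T_k^2 \geq \frac{9}{8}T_{k-1}^2$ to bound the distance term by $\frac{8}{9}$ times the previous potential $f(X_{k-1}) - f(x^{\star}) + \frac{1}{3T_{k-1}^{2}}\|X_{k-1} - x^{\star}\|^{2}$, and unrolls from the convention $T_0 = 1$, exactly as you do.
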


While the above theorem indicates that the optimality gap decays geometrically by the factor of $\frac{8}{9}$, the integration time increases geometrically at each iteration.
However, since the \emph{squared} integration time \(T^2_{k}\) increases precisely by $\frac{9}{8}$, the inverse contraction factor, the total integration time to achieve a \(\varepsilon\)-accurate solution scales as \(\frac{1}{\sqrt{\varepsilon}}\); this is presented in the following corollary, which is a counterpart to \Cref{cor:hf-cts-strong-cvx} for convex \(f\).
\begin{corollary}
\label{cor:hf-cts-cvx}
Let \(f\) satisfy \emph{\convexassump{}}.
To obtain \(X_{K}\) that is a \(\varepsilon\)-accurate minimizer, it suffices to run \emph{\CONTALG{}} with parameter \(\lambda = 1\) for \(K\) iterations and integration time sequence \(\{T_{k}\}_{k=1}^{K}\) such that
\begin{equation*}
    K = \left\lceil \frac{1}{\log(\frac{9}{8})} \cdot \log \frac{f(X_{0}) - f(x^{\star}) + \frac{1}{3}\|X_{0}  - x^{\star}\|^{2}}{\varepsilon}\right\rceil~; \quad T_{k} = \left(\frac{9}{8}\right)^{\nicefrac{k}{2}}~.
\end{equation*}
The total integration time is $T_{\mathrm{tot}} \leq 18 \sqrt{\frac{f(X_{0}) - f(x^{\star}) + \frac{1}{3}\|X_{0} - x^{\star}\|^{2}}{\varepsilon}}~.$
\end{corollary}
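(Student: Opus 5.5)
The plan has two parts: accuracy, which follows from \Cref{thm:hf-cts-cvx}, and the total-time bound, which is a geometric sum. Write $D := f(X_{0}) - f(x^{\star}) + \frac{1}{3}\|X_{0} - x^{\star}\|^{2}$ and $r := (9/8)^{1/2}$, so that $T_{k} = r^{k}$.

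\emph{Accuracy.} The schedule $T_{k} = r^{k}$ gives $T_{0} = 1$ and $T_{k}^{2} = \frac{9}{8}T_{k-1}^{2}$, so the hypotheses of \Cref{thm:hf-cts-cvx} hold with equality. That theorem yields $f(X_{K}) - f(x^{\star}) \le (8/9)^{K} D$. The choice $K \ge \log(D/\varepsilon)/\log(9/8)$ gives $(8/9)^{K} \le \varepsilon/D$, so $X_{K}$ is $\varepsilon$-accurate. If $D \le \varepsilon$, the ceiling gives $K \le 0$, no iterations are run, $X_{0}$ is already $\varepsilon$-accurate, and $T_{\mathrm{tot}} = 0$. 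So we may assume $D > \varepsilon$ and $K \ge 1$.

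\emph{Total time.} The total time is the geometric sum $T_{\mathrm{tot}} = \sum_{k=1}^{K} r^{k} = \frac{r(r^{K}-1)}{r-1}$. Set $s := \sqrt{D/\varepsilon}$. The ceiling gives $K < \log(D/\varepsilon)/\log(9/8) + 1$, hence $r^{K} = (9/8)^{K/2} < r\,s$. Substituting,
\begin{equation*}
T_{\mathrm{tot}} < \frac{r^{2}s - r}{r - 1}.
\end{equation*}

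\emph{Main obstacle: the constant.} The leading constant here is $r^{2}/(r-1) = \frac{9/8}{\sqrt{9/8}-1} \approx 18.55$, which is slightly larger than $18$. The subtracted term $-r/(r-1) \approx -17.5$ helps only when $s$ is bounded. The inequality $T_{\mathrm{tot}} \le 18 s$ is equivalent to $(r^{2} - 18(r-1))\,s \le r$, and since $r^{2} - 18(r-1) \approx 0.033$, this holds only for $s \lesssim 32$, i.e. $D/\varepsilon \lesssim 10^{3}$.

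I would handle this in two steps.
\begin{enumerate}
\item First, check whether the worst case of the ceiling can be excluded. It cannot in general: when $\log(D/\varepsilon)/\log(9/8)$ lies just above an integer, $r^{K}/s$ approaches $r$. In that regime the exact sum behaves like $18.55\,s$ for large $s$.
\item Second, I would therefore prove the bound in the form $T_{\mathrm{tot}} < \frac{r^{2}}{r-1}\sqrt{D/\varepsilon} - \frac{r}{r-1}$. This gives the stated constant $18$ in the range $D/\varepsilon \lesssim 10^{3}$, and the slightly larger constant $\approx 18.55$ in general.
\end{enumerate}
Matching the statement's constant $18$ uniformly would need an extra ingredient. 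One option is to shift the schedule to $T_{k} = r^{k-1}$; \Cref{thm:hf-cts-cvx} would then have to be re-checked with normalization $T_{0} = r^{-1}$, which changes the initial potential $D$. Failing that, I would flag the constant as $\lceil r^{2}/(r-1) \rceil = 19$.

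All steps except this last constant comparison are routine. The order of work is: invoke \Cref{thm:hf-cts-cvx}, sum the geometric series, bound $r^{K}$ via the ceiling, and finally compare constants.
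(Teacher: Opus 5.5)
Your proposal is correct and follows the paper's route. Accuracy comes from \Cref{thm:hf-cts-cvx}, since the schedule has $T_0=1$ and $T_k^2=\frac98 T_{k-1}^2$. The time is the geometric sum $\sum_{k=1}^{K}(9/8)^{k/2}=\frac{3}{3-\sqrt{8}}\bigl((9/8)^{K/2}-1\bigr)$.

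You part ways with the paper only at the step you flagged, and there you are right and the paper is not. The paper bounds $(9/8)^{K/2}\le\sqrt{D/\varepsilon}$ and then uses $\frac{3}{3-\sqrt{8}}=3(3+\sqrt{8})\approx 17.49\le 18$. Because $K$ is a ceiling, $K\ge\log_{9/8}(D/\varepsilon)$ forces $(9/8)^{K/2}\ge\sqrt{D/\varepsilon}$, so that inequality runs the wrong way. It would need a floor, which would break the accuracy guarantee.

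Your bound $(9/8)^{K/2}<\sqrt{9/8}\,\sqrt{D/\varepsilon}$ is the correct one. It gives the constant $\frac{9/8}{\sqrt{9/8}-1}=9+\frac{27\sqrt{2}}{4}\approx 18.55$, and this constant cannot be lowered for the stated $K$ and schedule. So the constant $18$ is false, not merely unproven. For example, if $D/\varepsilon$ is slightly above $(9/8)^{60}\approx 1.17\times 10^{3}$, then $K=61$ and $T_{\mathrm{tot}}\approx 617.6>616.4\approx 18\sqrt{D/\varepsilon}$. The corollary holds with $9+\frac{27\sqrt{2}}{4}$, hence with $19$, in place of $18$. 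The paper's discrete analogue, \Cref{cor:hf-extg-cvx}, does account for the ceiling via the extra factor $q$.

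A caution on your side remark: shifting to $T_k=(9/8)^{(k-1)/2}$ does not rescue $18$ relative to your $D$. It replaces the potential by $f(X_0)-f(x^\star)+\frac{3}{8}\|X_0-x^\star\|^2\le\frac98 D$. When $f(X_0)-f(x^\star)$ is small compared with $\|X_0-x^\star\|^2$, this brings back the same $18.55$.
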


The proofs of the above theorem and corollary are stated in \Cref{app:prf:hf-cts-cvx}.
\section{From exact Hamiltonian Flow to a discrete time method for minimizing convex \texorpdfstring{\(f\)}{f}}
\label{sec:discrete-algos}
The results of the prior section showcased the potential of an idealized iterative method based on the Hamiltonian flow for minimizing \(f\) at an accelerated rate while only having access to a gradient oracle of \(f\) and an exact solver for the Hamiltonian dynamics.
In this section, we explore how to transfer the guarantees therein to an \textit{implementable} algorithm via a practical discretization of the Hamiltonian flow, specifically using the \emph{extragradient} integrator \citep[Sec. 4]{fu2025hamiltoniandescentalgorithmsoptimization}.
\subsection{Properties of the extragradient integrator}
A crucial property of the extragradient integrator that makes it a viable scheme for minimizing convex \(f\) is that, when the step size is sufficiently small, the Hamiltonian value \emph{decreases} with every iteration; this is stated in the following lemma (see \Cref{app:prf:hf-extg-nonincrease-ham} for the proof).
\begin{lemma}
    \label{lem:hf-extg-nonincrease-ham}
    Let \(f\) satisfy \emph{\convexassump{}} and \emph{\smoothassump{}}.
    Choose \(\eta \leq \nicefrac{1}{\sqrt{L}}\), and suppose \((x_{n + 1}, y_{n + 1})\) is produced by one step of the extragradient integrator \emph{(\ref{eqn:extg-update})} from \((x_{n}, y_{n})\).
    Then
    \begin{equation}
    \label{eq:hf-extg-nonincrease-ham}
        f(x_{n+1}) + \frac{1}{2}\|y_{n+1}\|^2 \leq f(x_n) + \frac12\|y_n\|^2~.
    \end{equation}
\end{lemma}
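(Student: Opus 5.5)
The plan is a direct one-step energy computation. Write \(g_{n+1/2} = \nabla f(x_{n+1/2})\) and \(g_{n+1} = \nabla f(x_{n+1})\). We bound the change in potential energy and the change in kinetic energy separately. Adding the two bounds, the cross terms involving \(y_n\) should cancel, leaving only a gradient-difference term. That term is then controlled by smoothness.

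First, the kinetic part is an exact identity from \(y_{n+1} = y_n - \eta g_{n+1}\):
\[
\tfrac12\|y_{n+1}\|^2 - \tfrac12\|y_n\|^2 = -\eta\langle y_n, g_{n+1}\rangle + \tfrac{\eta^2}{2}\|g_{n+1}\|^2 .
\]
For the potential part, we apply \convexassump{} at the point \(x_{n+1}\) to get \(f(x_{n+1}) - f(x_n) \le \langle g_{n+1}, x_{n+1} - x_n\rangle\). Since \(x_{n+1} - x_n = \eta y_n - \eta^2 g_{n+1/2}\), this gives
\[
f(x_{n+1}) - f(x_n) \le \eta\langle g_{n+1}, y_n\rangle - \eta^2\langle g_{n+1}, g_{n+1/2}\rangle .
\]
Adding the two displays, the \(\eta\langle y_n, g_{n+1}\rangle\) terms cancel. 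Completing the square, the change in the Hamiltonian is then at most
\[
\tfrac{\eta^2}{2}\bigl(\|g_{n+1} - g_{n+1/2}\|^2 - \|g_{n+1/2}\|^2\bigr).
\]

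Second, we bound the difference term. For convex \(f\), \smoothassump{} implies that \(\nabla f\) is \(L\)-Lipschitz; this is a standard fact, and we can cite it or prove it via co-coercivity. Since \(x_{n+1} - x_{n+1/2} = -\eta^2 g_{n+1/2}\), we get
\[
\|g_{n+1} - g_{n+1/2}\| \le L\eta^2\|g_{n+1/2}\| .
\]
Hence the change in the Hamiltonian is at most \(\tfrac{\eta^2}{2}(L^2\eta^4 - 1)\|g_{n+1/2}\|^2\), which is \(\le 0\) whenever \(\eta \le 1/\sqrt{L}\). This proves \cref{eq:hf-extg-nonincrease-ham}.

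The main obstacle is the first step, specifically choosing where to linearize \(f\). Linearizing at \(x_n\), or using the smoothness upper bound, leaves an uncancelled \(\langle y_n,\cdot\rangle\) term of indefinite sign. Linearizing at the new point \(x_{n+1}\) is what makes the momentum cross term cancel exactly against the kinetic-energy change. Once that choice is made, the rest is the routine Lipschitz estimate above.
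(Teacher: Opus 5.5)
Your proposal is correct and follows the paper's proof essentially line by line. Both use convexity linearized at $x_{n+1}$, expand the kinetic energy exactly, cancel the $\eta\langle y_n,\nabla f(x_{n+1})\rangle$ cross term, complete the square to $\frac{\eta^2}{2}\bigl(\|\nabla f(x_{n+1})-\nabla f(x_{n+1/2})\|^2-\|\nabla f(x_{n+1/2})\|^2\bigr)$, and finish with the Lipschitz-gradient bound. Your remark that the quadratic upper bound in the smoothness assumption yields $L$-Lipschitz gradients only together with convexity makes explicit a step the paper glosses as ``by smoothness of $f$''.
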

We contrast this with the property of the exact Hamiltonian flow along which the Hamiltonian value is \emph{preserved}.
Nonetheless, a consequence of this lemma is that running the extragradient integrator from \(y_{0} = \bm{0}\) and any position \(x_{0} \in \bbR^{d}\) results in a sequence of \(\{x_{n}\}_{n \geq 0}\) such that \(f(x_{n}) \leq f(x_{0})\), mirroring what happens along the Hamiltonian flow.
However, as in the continuous-time setting, this bound relative to the initial function value alone does not provide an explicit quantitative convergence rate.
It is therefore natural to ask if the extragradient integrator satisfies a version of \Cref{cor:hf-convex-main}, and we answer this affirmatively.
In particular, provided that the step size is sufficiently small, it is possible to control the optimality gap of an average of the iterates produced by the extragradient integrator.
For \(N > 0\), we define the weighted average
\begin{equation*}
    x^{\mathrm{avg}}(x_{0}; N) = \frac{2}{N(N + 1)}\sum_{n=1}^{N}(N - n + 1) x_{n}~.
\end{equation*}
\begin{lemma}\label{lem:hf-extg-convex-main}
Assume $f$ satisfies \emph{\convexassump{}} and \emph{\smoothassump{}}, and let $\{(x_n, y_n)\}_{n \geq 1}$ be the iterates of extragradient integrator \emph{(\ref{eqn:extg-update})} with step size $\eta \leq \frac{1}{\sqrt{L}}$, starting from $(x_0, \bm{0})$.
Then, for any \(N \geq 1\),
\begin{equation}
    \label{eqn:oracle_step_c_extg}
f(x^{\mathrm{avg}}(x_{0}; N )) - f(x^{\star}) + \frac{\|x_{N} - x^{\star}\|^{2}}{3\eta^{2}N(N+1)} 
    \leq \frac{2}{3}(f(x_{0}) - f(x^{\star})) + \frac{\|x_{0} - x^{\star}\|^{2}}{3\eta^{2}N(N + 1)}~.
\end{equation}
\end{lemma}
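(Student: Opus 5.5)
The plan is to mimic the continuous-time argument of \Cref{lem:hf-convex-main} and \Cref{cor:hf-convex-main}, with derivatives replaced by finite differences and the energy conservation replaced by the energy decrease of \Cref{lem:hf-extg-nonincrease-ham}. Write \(z = x^{\star}\) and define the discrete analogue of \(\frac12\|X_t - z\|^2\), namely \(D_n = \frac{1}{2}\|x_n - z\|^{2}\). The target per-step inequality, analogous to \cref{eq:ref-ineq-hf-convex}, is
\begin{equation*}
f(x_{n+1}) - f(z) \leq \frac{2}{3}(f(x_{0}) - f(z)) - \frac{1}{3\eta^{2}}\big(\langle x_{n+1}-z, y_{n+1}\rangle\eta - \langle x_{n}-z, y_{n}\rangle\eta\big),
\end{equation*}
or a version of it with a second-difference of \(D_n\) on the right-hand side. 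The quantity \(h_n = \eta\langle x_n - z, y_n\rangle\) plays the role of \(h(t)\).

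\textbf{Step 1: one-step identity for \(h\).} Using the update, \(x_{n+1} = x_n + \eta y_n - \eta^{2}\nabla f(x_{n+1/2})\) and \(y_{n+1} = y_n - \eta\nabla f(x_{n+1})\), I will expand
\begin{equation*}
h_{n+1} - h_n = \eta\langle x_{n+1} - x_n, y_{n+1}\rangle + \eta\langle x_n - z, y_{n+1} - y_n\rangle .
\end{equation*}
I will then regroup this as \(\eta^{2}\|y_{n+1}\|^{2} - \eta^{2}\langle x_{n+1} - z, \nabla f(x_{n+1})\rangle\) plus error terms involving \(\nabla f(x_{n+1/2}) - \nabla f(x_{n+1})\) and \(\eta^{2}\)-order cross terms.

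\textbf{Step 2: bound the main and error terms.} Convexity gives \(\langle x_{n+1} - z, \nabla f(x_{n+1})\rangle \ge f(x_{n+1}) - f(z)\). \Cref{lem:hf-extg-nonincrease-ham}, iterated from \(y_0 = \bm{0}\), gives \(\frac12\|y_{n+1}\|^{2} \le f(x_0) - f(x_{n+1})\). Together these yield \(h_{n+1} - h_n \le \eta^{2}\big[2(f(x_0) - f(z)) - 3(f(x_{n+1}) - f(z))\big] + E_n\). The main obstacle is showing that the error \(E_n \le 0\) when \(\eta \le 1/\sqrt{L}\). For this I will use the identity \(\eta(y_{n+1} - y_n) = \eta^{2}(\ldots)\) together with smoothness and cocoercivity, in the form \(\langle \nabla f(a) - \nabla f(b), a - b\rangle \ge \frac1L\|\nabla f(a) - \nabla f(b)\|^{2}\) and \(\|x_{n+1} - x_{n+1/2}\| = \eta^{2}\|\nabla f(x_{n+1/2})\|\). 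This should follow the same computation that proves \Cref{lem:hf-extg-nonincrease-ham}, so that the slack in the energy decrease absorbs \(E_n\). If a direct absorption fails, my fallback is to keep the energy-decrease slack explicitly, i.e.\ to use \(\frac12\|y_{n+1}\|^2 \le f(x_0) - f(x_{n+1}) - (\text{dissipation})\), and cancel the error against it.

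\textbf{Step 3: summation.} Sum the per-step inequality with weights \((N-n)\) for \(n = 0,\ldots,N-1\), the discrete analogue of \((T-t)\). Summation by parts turns \(\sum_n (N-n)(h_{n+1} - h_n)\) into \(\sum_{n=1}^{N} h_n - N h_0\), and \(h_0 = 0\) since \(y_0 = \bm{0}\). Next I need to relate \(\sum_n h_n\) to \(D_N - D_0\). The relation \(D_{n+1} - D_n = \langle x_{n+1} - z, x_{n+1} - x_n\rangle - \frac12\|x_{n+1} - x_n\|^2\) needs \(x_{n+1} - x_n \approx \eta y_{n+1}\), and the difference is again an \(\eta^{2}\)-order error handled as in Step 2. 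Alternatively, I can define \(h_n\) from the outset as \(\langle x_n - z, x_{n} - x_{n-1}\rangle\) so that this telescoping is exact, at the cost of moving the error into Step 1. Dividing by \(\eta^{2}N(N+1)/2\) gives
\begin{equation*}
\frac{2}{N(N+1)}\sum_{n=1}^{N}(N-n+1)\big(f(x_n) - f(z)\big) + \frac{\|x_N - z\|^2}{3\eta^2N(N+1)} \le \frac23\big(f(x_0) - f(z)\big) + \frac{\|x_0 - z\|^2}{3\eta^2N(N+1)} .
\end{equation*}
Jensen's inequality, using convexity of \(f\), then yields \eqref{eqn:oracle_step_c_extg}.
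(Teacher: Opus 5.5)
Your overall architecture (a discrete $h_n$, a per-step bound from convexity and \Cref{lem:hf-extg-nonincrease-ham}, a double sum with linearly decreasing weights, then Jensen) matches the paper's. However, Step 2 fails. The per-step inequality you target,
\[
h_{n+1}-h_n\le \eta^{2}\bigl[2(f(x_0)-f(z))-3(f(x_{n+1})-f(z))\bigr],\qquad h_n=\eta\langle x_n-z,y_n\rangle ,
\]
is false for the extragradient integrator, so no absorption argument can prove $E_n\le 0$.

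Your regrouping of $-\eta^{2}\langle x_n-z,\nabla f(x_{n+1})\rangle$ leaves in $E_n$ the term $\eta^{3}\langle y_{n+1},\nabla f(x_{n+1})\rangle$. This term is sign-indefinite and of size $\eta^{3}\|y_{n+1}\|\|\nabla f(x_{n+1})\|$. The proof of \Cref{lem:hf-extg-nonincrease-ham} only certifies a dissipation of $\tfrac{\eta^{2}}{2}(1-L^{2}\eta^{4})\|\nabla f(x_{n+1/2})\|^{2}$ per step, which cannot dominate it at high speed.

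Concretely, take $d=1$, $L=100$, $\eta=0.1$, and let $f(x)=-x-0.005$ for $x\le-0.01$, $f(x)=50x^{2}$ on $[-0.01,0.1]$, and $f(x)=10x-0.5$ for $x\ge 0.1$. Here $f'$ is continuous, nondecreasing and $100$-Lipschitz, with $\xstar=0$ and $f(\xstar)=0$. Start at $x_0=-50.55$. The first $100$ steps stay on the left linear piece, giving $x_{100}=-0.05$, $y_{100}=10$, $x_{101}=0.85$, and $y_{101}=9$. Hence $h_{101}-h_{100}=0.815$, while $\eta^{2}[2f(x_0)-3f(x_{101})]=0.7709$. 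Here $E_{100}=0.09$. Your fallback does not help either: even the full accumulated dissipation $f(x_0)-f(x_{101})-\tfrac12 y_{101}^{2}=2.045$ together with the convexity gap $0.5$ at $x_{101}$ only supplies $\eta^{2}(2\cdot 2.045+0.5)=0.0459$.

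The missing idea is the other product-rule split, used in \Cref{lem:hf-extg-convex-hnterm}:
\[
h_{n+1}-h_n=\eta\langle x_{n+1}-z,y_{n+1}-y_n\rangle+\eta\langle x_{n+1}-x_n,y_n\rangle .
\]
The first term is exactly $-\eta^{2}\langle x_{n+1}-z,\nabla f(x_{n+1})\rangle$. The second equals $\eta^{2}\|y_n\|^{2}-\eta^{2}\langle\nabla f(x_{n+1/2}),x_{n+1/2}-x_n\rangle$, and convexity at the half step bounds its cross term by $\eta^{2}(f(x_n)-f(x_{n+1/2}))\le\eta^{2}(f(x_n)-f(x_{n+1}))$. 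Combined with $\|y_n\|^{2}\le 2(f(x_0)-f(x_n))$, this gives
\[
h_{n+1}-h_n\le\eta^{2}\bigl[2(f(x_0)-f(z))-(f(x_n)-f(z))-2(f(x_{n+1})-f(z))\bigr].
\]
This is your target plus the telescoping term $\eta^{2}(f(x_{n+1})-f(x_n))$. The coefficient $3$ reappears only after summing and using $f(x_N)\le f(x_0)$. So the cross term is a leading-order contribution that redistributes weight between consecutive iterates, not an error to be absorbed.

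Step 3 has the same misdiagnosis. The quantity $\|x_{n+1}-z\|^{2}-\|x_n-z\|^{2}-2\eta\langle x_n-z,y_n\rangle$ equals $-2\eta^{2}\langle\nabla f(x_{n+1/2}),x_{n+1/2}-z\rangle+\eta^{2}\|y_n\|^{2}+\eta^{4}\|\nabla f(x_{n+1/2})\|^{2}$, which is of the same order as the main terms. The paper bounds it by $2\eta^{2}(f(x_0)-f(x_n))-2\eta^{2}(f(x_{n+1})-f(z))$ using three facts: convexity at $x_{n+1/2}$, the descent inequality $\tfrac{\eta^{2}}{2}\|\nabla f(x_{n+1/2})\|^{2}\le f(x_{n+1/2})-f(x_{n+1})$, and the energy bound. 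It then carries this through the double sum together with the cumulative bound on $h_n$. Your backward pairing $x_{n+1}-x_n\approx\eta y_{n+1}$ instead leaves $\eta^{2}\langle x_{n+1}-z,\nabla f(x_{n+1})-\nabla f(x_{n+1/2})\rangle$. That term is sign-indefinite, scales with $\|x_{n+1}-z\|$, and has no counterpart among the function-value terms you would still have to control.
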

We prove this lemma in Appendix \ref{sec:prf:hf-extg-convex-main}.
The result above is motivated by the properties that we establish for the average iterate along the Hamiltonian flow trajectory in the exact Hamiltonian flow setting discussed in the prior section, which leads to the similarities between \cref{eqn:oracle_step_c_extg} and \cref{eq:hf-convex-weighted-avg}.

\begin{algorithm}[t]
\caption{Discretized Hamiltonian Flow for optimization with Averaging (\DISCALG{})}
\algotitle{\DISCALG{}}{alg:disc_phfopt}
\SetKwInOut{Input}{Input}\SetKwInOut{Output}{Output}
\Input{Initialization: $x_0\in \R^d$, Number of iterations: $K \in \mathbb{N}$, parameter: $\lambda  \in \R_{\geq 0}$, step size: $\eta > 0$, discretization steps sequence: $\{N_1, \dots, N_K\} \in \mathbb{N}^K$}
\For{\(k = 1\) \KwTo \(K\)}{
    Obtain sequence \(\{(x^{(k)}_{n}, y^{(k)}_{n})\}_{n = 1}^{N_{k}}\) using \ref{eqn:extg-update} from \((x_{0}^{(k)}, y_{0}^{(k)}) = (x_{k- 1}, \bm{0})\).

    Compute \(x^{\mathrm{avg}}(x_{k - 1}; N_{k}) = \frac{2}{N_{k}(N_{k} + 1)}\sum_{n=1}^{N_{k}} (N_{k} - n + 1) x^{(k)}_{n}\).

    Set \(x_{k} = \frac{1}{\lambda + 1} x^{\mathrm{avg}}(x_{k - 1}; N_{k}) + \frac{\lambda}{\lambda + 1} x^{(k)}_{N_{k}}\).
    }
\Return $x_K$
\end{algorithm}

\Cref{lem:hf-extg-convex-main} provides the theoretical basis for the design of \nameref{alg:disc_phfopt} (\Cref{alg:disc_phfopt}) for minimizing convex and \(L\)-smooth functions.
Contrary to \nameref{alg:phfopt} (which assumes we can simulate the exact Hamiltonian flow), \nameref{alg:disc_phfopt} is implementable with access to a gradient oracle to \(f\).
From an implementation standpoint, obtaining the sequence \(\{x_{n}^{(k)}, y_{n}^{(k)}\}_{n=1}^{N_{k}}\) for every \(k\) would be performed in an inner loop.
The computation of \(x^{\mathrm{avg}}(x_{k - 1}; N_{k})\) can be incorporated as part of this loop wherein it is computed as a running weighted sum (analogous to a streaming sum), and therefore does not require storing the entire sequence in memory for its computation; the presentation in~\Cref{alg:disc_phfopt} is intended for readability.
It is also possible to replace \ref{eqn:extg-update} with \ref{eq:implicit-integ}, which requires access to a proximal oracle for \(f\) (see details in \cref{app:sec:dhfa-im}); essentially, this substitution results in quantitatively similar iteration complexities as using \textsf{HF\textsuperscript{extg}} but without assuming smoothness of \(f\).

\subsection{Provable minimization of \texorpdfstring{$\alpha$-strongly convex $f$}{α-strongly convex f}}
\begin{theorem}
\label{thm:hf-extg-strong-cvx}
Assume that \(f\) satisfies \emph{\convexassump{}}, \emph{\smoothassump{}} and \emph{\quadgrowassump{}}.
Suppose \(x_{K}\) is the output of running \emph{\nameref{alg:disc_phfopt}} for \(K\) iterations from initialization \(x_{0} \in \bbR^{d}\), with parameter \(\lambda = 0\), step size \(0 < \eta \leq \frac{1}{\sqrt{L}}\), and discretization steps sequence satisfying \(N_{k} = N \geq \lceil \frac{c}{\eta\sqrt{\alpha}}\rceil\) for all \(k \in [K]\), for any $c > 0$.
Then,
\begin{equation*}
    f(x_{K}) - f(x^{\star}) \leq \left(\frac{2}{3} + \frac{2}{3c^{2}}\right)^{K}(f(x_{0}) - f(x^{\star}))~.
\end{equation*}
\end{theorem}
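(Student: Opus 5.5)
The plan is to mirror the continuous-time argument behind \Cref{thm:hf-cts-strong-cvx}, with \Cref{lem:hf-extg-convex-main} replacing \Cref{cor:hf-convex-main}. Since $\lambda = 0$, each outer iteration of \nameref{alg:disc_phfopt} simply sets $x_k = x^{\mathrm{avg}}(x_{k-1}; N)$. It therefore suffices to prove a one-step contraction
\begin{equation*}
f(x^{\mathrm{avg}}(x_{0}; N)) - f(\xstar) \leq \left(\frac{2}{3} + \frac{2}{3c^{2}}\right)(f(x_{0}) - f(\xstar))
\end{equation*}
for an arbitrary starting point $x_0$. The theorem then follows by induction on $k$, applying this bound with $x_0 \leftarrow x_{k-1}$ at each step.

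For the one-step bound, I would invoke \Cref{lem:hf-extg-convex-main}, which applies because $\eta \leq 1/\sqrt{L}$ and $y_0 = \bm{0}$. The term $\frac{\|x_N - \xstar\|^2}{3\eta^2 N(N+1)}$ on the left-hand side is nonnegative, so I drop it. On the right-hand side, \quadgrowassump{} gives $\|x_0 - \xstar\|^2 \leq \frac{2}{\alpha}(f(x_0) - f(\xstar))$. This yields
\begin{equation*}
f(x^{\mathrm{avg}}(x_{0}; N)) - f(\xstar) \leq \left(\frac{2}{3} + \frac{2}{3\alpha\eta^{2}N(N+1)}\right)(f(x_{0}) - f(\xstar)).
\end{equation*}

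The remaining step is to show that $\alpha \eta^2 N(N+1) \geq c^2$. Since $N \geq \lceil c/(\eta\sqrt{\alpha})\rceil \geq c/(\eta\sqrt{\alpha})$, we have $\eta^2 N^2 \geq c^2/\alpha$. Combined with $N(N+1) \geq N^2$, this gives $\alpha\eta^2 N(N+1) \geq c^2$, so the coefficient is at most $\frac{2}{3} + \frac{2}{3c^2}$. The contraction factor does not depend on $k$, so iterating gives the stated bound.

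I do not expect a real obstacle here: all the substantive work is contained in \Cref{lem:hf-extg-convex-main}. The only points to check are that the lemma's hypotheses hold anew at every outer iteration and that no $\lambda$-mixing appears when $\lambda = 0$. The first holds because each inner loop restarts from $(x_{k-1}, \bm{0})$ with the same step size $\eta$. The second holds because with $\lambda = 0$ the update is exactly $x_k = x^{\mathrm{avg}}(x_{k-1}; N)$.
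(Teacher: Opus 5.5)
Your proposal is correct and follows the paper's proof: with $\lambda = 0$ you have $x_k = x^{\mathrm{avg}}(x_{k-1}; N)$, you apply \Cref{lem:hf-extg-convex-main} and drop the nonnegative $\|x_N - \xstar\|^2$ term, you bound $\|x_{k-1}-\xstar\|^2 \leq \frac{2}{\alpha}(f(x_{k-1}) - f(\xstar))$ by quadratic growth, and you use $\alpha\eta^2 N(N+1) \geq c^2$ before iterating the contraction over $k$. Your check that each inner loop restarts from $(x_{k-1}, \bm{0})$ with the same $\eta \leq 1/\sqrt{L}$ is exactly what justifies reapplying the lemma at every outer iteration.
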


From this, we obtain the following corollary on the iteration complexity of~\nameref{alg:disc_phfopt} for minimizing convex functions that satisfy both \(\alpha\)-quadratic growth and \(L\)-smoothness.
The proofs of the previous theorem and the following corollary are given in \Cref{app:prf:hf-extg-strong-cvx}.
\begin{corollary}
\label{cor:hf-extg-strong-cvx}
Assume that \(f\) satisfies \emph{\convexassump{}}, \emph{\smoothassump{}} and \emph{\quadgrowassump{}}.
To obtain \(x_{K}\) that is an \(\varepsilon\)-accurate minimizer, it suffices to run \emph{\nameref{alg:disc_phfopt}} with parameter \(\lambda = 0\) and step size \(\eta = \frac{1}{\sqrt{L}}\) for \(K\) iterations and discretization step sequence \(\{N_{k}\}_{k=1}^{K}\) such that
\begin{equation*}
    K = \left\lceil 4 \log \frac{f(x_{0}) - f(x^{\star})}{\varepsilon} \right\rceil~; \quad N_{k} = \left\lceil \frac{5}{2}\sqrt{\kappa} \right\rceil~~~\forall~k \in [K]~.
\end{equation*}
The total number of gradient evaluations is $N_{\mathrm{tot}} = 2 \cdot \left\lceil \frac{5}{2}\sqrt{\kappa}\right\rceil \cdot \left\lceil 4 \log \frac{f(x_{0}) - f(x^{\star})}{\varepsilon} \right\rceil~.$
\end{corollary}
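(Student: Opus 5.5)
This follows by plugging the parameters of the corollary into \Cref{thm:hf-extg-strong-cvx} and then counting gradient calls.

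\emph{Choice of $c$.} Set $\eta = 1/\sqrt{L}$, which is allowed in \Cref{thm:hf-extg-strong-cvx}. Then
\[
\frac{c}{\eta\sqrt{\alpha}} = c\sqrt{L/\alpha} = c\sqrt{\kappa}.
\]
So with $c = \frac{5}{2}$ the choice $N_k = \lceil \frac{5}{2}\sqrt{\kappa}\rceil$ meets the requirement $N_k \geq \lceil \frac{c}{\eta\sqrt{\alpha}}\rceil$. The theorem then gives contraction factor
\[
\rho = \frac{2}{3} + \frac{2}{3}\cdot\frac{4}{25} = \frac{58}{75}.
\]
After $K$ iterations, $f(x_K) - f(x^\star) \leq \rho^K (f(x_0) - f(x^\star))$.

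\emph{Choice of $K$.} It suffices that $\rho^K \leq \varepsilon/(f(x_0)-f(x^\star))$. This holds once $K \geq \log\frac{f(x_0)-f(x^\star)}{\varepsilon} \big/ \log(1/\rho)$. Since $\log(75/58) \approx 0.257 \geq \frac14$, the choice $K = \lceil 4\log\frac{f(x_0)-f(x^\star)}{\varepsilon}\rceil$ is enough. The only step needing care is confirming $\log(1/\rho) \geq 1/4$, i.e.\ $e^{1/4} \approx 1.284 \leq 75/58 \approx 1.293$. This is the same numerical check as in \Cref{cor:hf-cts-strong-cvx}, and it holds with a small margin. If $\varepsilon \geq f(x_0)-f(x^\star)$, the claim is trivial.

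\emph{Gradient count.} Each extragradient step in \eqref{eqn:extg-update} evaluates $\nabla f$ twice, at $x_{n+1/2}$ and at $x_{n+1}$. The averaging and mixing steps need no gradients. So each outer iteration costs $2N_k$ gradients, and the total is
\[
N_{\mathrm{tot}} = 2\left\lceil \tfrac{5}{2}\sqrt{\kappa}\right\rceil\left\lceil 4\log\tfrac{f(x_0)-f(x^\star)}{\varepsilon}\right\rceil.
\]
Reusing $\nabla f(x_{n+1})$ in the next half-step is not possible, since that half-step evaluates at $x_{n+1}+\eta y_{n+1}$; the factor $2$ is therefore exact.
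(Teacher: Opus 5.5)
Your proposal is correct and matches the paper's proof. Apply \Cref{thm:hf-extg-strong-cvx} with $\eta = 1/\sqrt{L}$ and $c = \frac{5}{2}$ (so $\lceil c/(\eta\sqrt{\alpha})\rceil = \lceil \frac{5}{2}\sqrt{\kappa}\rceil$) to get the factor $\frac{58}{75}$, use $\log\frac{75}{58} \geq \frac{1}{4}$ to justify $K$, and count two gradient calls per extragradient step; your explicit numerical check and the trivial case $\varepsilon \geq f(x_0)-f(x^\star)$ are small but useful additions to the paper's terser argument, which also drops a $\log$ in its displayed lower bound on $K$.
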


\subsection{Provable minimization of convex \texorpdfstring{$f$}{f}}

Owing to the similarity between \cref{eq:hf-convex-weighted-avg} and \cref{eqn:oracle_step_c_extg}, one can naturally expect that a combination of \(x^{\mathrm{avg}}(x_{0}; N)\) and \(x_{N}\) leads to a viable strategy to minimizing \(f\).
In the following lemma, we show that this is indeed the case, and this explains the necessity of the parameter \(\lambda\) in \nameref{alg:disc_phfopt}.
We provide the proof of this lemma in~\Cref{prf:lem:hf-extg-convex-mix-main}.
\begin{lemma}
\label{lem:hf-extg-convex-mix-main}
Assume that \(f\) satisfies \emph{\convexassump{}} and \emph{\smoothassump{}}.
Let \(\{(x_{n}, y_{n})\}_{n\geq 1}\) be generated by the extragradient integrator with step size \(\eta \leq \frac{1}{\sqrt{L}}\).
For any \(N \geq 4\) and initial position \(x_{0}\), define
\begin{equation*}
    x^{\mathrm{mix}}(x_{0}; N) = \frac{\lambda}{\lambda + 1} x_{N} + \frac{1}{1 + \lambda} x^{\mathrm{avg}}(x_{0}; N)~.
\end{equation*}
Then, defining \(c_{\lambda} = \frac{(6\lambda^{2} + 4\lambda + 1)}{6\lambda(\lambda + 1)}\), we have
\[
f(x^{\mathrm{mix}}(x_{0}; N)) - f(x^{\star}) + \frac{\|x^{\mathrm{mix}}(x_{0}; N) - x^{\star}\|^{2}}{3\lambda \eta^{2}N(N + 1)} \leq c_{\lambda}(f(x_{0}) - f(x^{\star})) + \frac{\|x_{0} - x^{\star}\|^{2}}{3\lambda\eta^{2}N(N + 1)}~.
\]
\end{lemma}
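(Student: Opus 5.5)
We follow the template of the continuous-time \Cref{lem:hf-convex-mix-main}: establish three discrete inequalities and combine them with weights $\lambda$, $1$, $\lambda^{2}$. Write $D_0 = f(x_0) - f(x^{\star})$ and $S = \eta^{2}N(N+1)$.

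\textbf{The three inequalities.}
\begin{itemize}
\item The first inequality is \Cref{lem:hf-extg-convex-main} itself:
\[
f(x^{\mathrm{avg}}) - f(x^{\star}) + \frac{\|x_N - x^{\star}\|^2}{3S} \le \frac{2}{3}D_0 + \frac{\|x_0 - x^{\star}\|^2}{3S}.
\]
\item The third inequality comes from \Cref{lem:hf-extg-nonincrease-ham} with $y_0 = \bm{0}$:
\[
f(x_N) - f(x^{\star}) \le D_0.
\]
\item The second inequality bounds $\frac{\|x^{\mathrm{avg}} - x^{\star}\|^2}{3S}$ by $aD_0 + \frac{\|x_0 - x^{\star}\|^2}{3S}$ for an explicit constant $a$. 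It is the discrete analogue of \cref{eq:cor:hf-convex-main-2}.
\end{itemize}

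\textbf{Proving the second inequality.} I would revisit the proof of \Cref{lem:hf-extg-convex-main}, which presumably runs a discrete version of the argument of \Cref{lem:hf-convex-main}. With $h_n = \langle x_n - x^{\star}, y_n\rangle$, the second difference of $\|x_n - x^{\star}\|^2$ is controlled by $2(f(x_0) - f(x_n)) - (f(x_n) - f(x^{\star}))$ plus error terms killed by $\eta \le 1/\sqrt{L}$. Summing twice gives a per-step bound
\[
\frac{\|x_n - x^{\star}\|^2}{3\eta^{2}} \lesssim \frac{\|x_0 - x^{\star}\|^2}{3\eta^{2}} + \frac{2}{3}\sum_{m \le n}(n-m+1)\big(D_0 - (f(x_m) - f(x^{\star}))\big).
\]
Dropping the nonpositive part leaves roughly $\frac{n(n+1)}{3}D_0$. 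Applying Jensen to $\|\cdot\|^2$ with weights $\frac{2(N-n+1)}{N(N+1)}$ and summing the resulting polynomial in $n$ against these weights gives $\frac{\|x^{\mathrm{avg}} - x^{\star}\|^2}{3S} \le aD_0 + \frac{\|x_0 - x^{\star}\|^2}{3S}$. The constant $a$ is $\frac{1}{9}$ in continuous time, since $\int 2(1-s)s^2\,\rmd s = \frac{1}{6}$. In discrete time the sum $\sum (N-n+1)n(n+1)$ introduces lower-order terms. For $N \ge 4$ these should bound $a$ by $\frac{1}{6}$, which matches the target constant: the $1$ in $6\lambda^2 + 4\lambda + 1$ corresponds to $a = \frac{1}{6}$.

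\textbf{Combining.} Take $\lambda\cdot$(first) $+$ (second) $+$ $\lambda^{2}\cdot$(third), mirroring the sketch of \Cref{lem:hf-convex-mix-main}.
\begin{itemize}
\item On the left, convexity of $f$ (Jensen) gives $\lambda f(x^{\mathrm{avg}}) + \lambda^2 f(x_N) \ge$ terms that, together with the norm parts, sum to $(\lambda + \lambda^{2})\, f(x^{\mathrm{mix}})$-type quantities.
\item Also on the left, Jensen for $\|\cdot\|^2$ gives
\[
\lambda\|x_N - x^{\star}\|^2 + \|x^{\mathrm{avg}} - x^{\star}\|^2 \ge \frac{1}{\lambda+1}\|\lambda x_N + x^{\mathrm{avg}} - (\lambda+1)x^{\star}\|^2 = (\lambda+1)\|x^{\mathrm{mix}} - x^{\star}\|^2.
\]
\item On the right, the $D_0$ coefficient is $\frac{2\lambda}{3} + \frac{1}{6} + \lambda^2$, and the norm coefficient is $\frac{\lambda+1}{3S}$.
\end{itemize}
Dividing by $\lambda(\lambda+1)$ gives $D_0$ coefficient $\frac{6\lambda^2 + 4\lambda + 1}{6\lambda(\lambda+1)} = c_\lambda$ and norm coefficient $\frac{1}{3\lambda S}$, matching the statement.

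\textbf{Main obstacle.} The hard step is the second inequality. I need to extract the per-iterate distance bound from the discrete energy argument with the extragradient errors handled correctly, and then verify the weighted-sum constant is at most $\frac{1}{6}$ exactly when $N \ge 4$. I would first try to reuse the intermediate summed inequality from the proof of \Cref{lem:hf-extg-convex-main} directly, rather than rederiving it.
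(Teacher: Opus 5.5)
\textbf{The gap is the middle inequality.} Your combination step is exactly the paper's and is correct. Once the middle inequality holds with constant $\frac16$, the weights $\lambda,1,\lambda^2$, the two Jensen steps, and division by $\lambda(\lambda+1)$ give $c_\lambda$ and the $\frac{1}{3\lambda S}$ coefficient. But that middle inequality is the only new content of the lemma, and your sketch of it fails as written.

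First, your per-iterate bound, after dropping the nonpositive part, reads $\|x_n-x^{\star}\|^2\le\|x_0-x^{\star}\|^2+\eta^2n(n+1)D_0$. This is off by a factor of $2$ and is false as a precise statement. The continuous analogue, obtained by integrating $\frac{\rmd^2}{\rmd t^2}\|X_t-x^{\star}\|^2\le 4D_0$ twice, is $\|X_t-x^{\star}\|^2\le\|X_0-x^{\star}\|^2+2t^2D_0$. The $2$ is sharp: take a smoothed $f(x)=\max(x,-\epsilon x)$ on $\mathbb{R}$ with $X_0=1$; after crossing $0$ the particle coasts with speed $\approx\sqrt2$ as $\epsilon\to0$. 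Your own continuous value $a=\frac19$ already presupposes the coefficient $2t^2$.

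Second, and more importantly, consider your proposed shortcut: reuse the summed inequality from the proof of \Cref{lem:hf-extg-convex-main} with $N$ replaced by $n$, and discard its nonpositive sum. This gives $\|x_n-x^{\star}\|^2\le\|x_0-x^{\star}\|^2+2\eta^2n(n+1)D_0$. Since $\sum_{n=1}^N(N-n+1)n(n+1)=\frac{N(N+1)(N+2)(N+3)}{12}$, this yields $a=\frac{(N+2)(N+3)}{9N(N+1)}$. That equals $\frac{7}{30}>\frac16$ at $N=4$ and falls below $\frac16$ only for $N\ge 9$. So ``the lower-order terms should be fine for $N\ge4$'' is precisely the claim that fails on that route.

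\textbf{The fix (the paper's route).} Discard the nonpositive terms at the level of a single step, before summing. Start from \cref{eq:deltan-bound} with $z=x^{\star}$ and $\Delta_n=\|x_{n+1}-x^{\star}\|^2-\|x_n-x^{\star}\|^2$. Drop $2\eta^2(f(x^{\star})-f(x_{n+1}))\le0$, bound $f(x_0)-f(x_n)\le D_0$, and use \Cref{lem:hf-extg-convex-hnterm} in the crude form $h_n\le2\eta nD_0$. This gives $\Delta_n\le(4n+2)\eta^2D_0$, hence
\[
\|x_n-x^{\star}\|^2-\|x_0-x^{\star}\|^2\le 2\eta^2n^2D_0 .
\]
This beats the previous bound by $2\eta^2nD_0$. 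The reindexing in the proof of \Cref{lem:hf-extg-convex-main} enlarges the $D_0$ coefficient in exchange for an extra negative term inside the sum, which you then throw away. Using $\sum_{n=1}^N(N-n+1)n^2=\frac{N(N+1)^2(N+2)}{12}$ and Jensen,
\[
\frac{\|x^{\mathrm{avg}}(x_0;N)-x^{\star}\|^2}{3S}\le\frac{\|x_0-x^{\star}\|^2}{3S}+\frac{N+2}{9N}\,D_0 ,
\]
and $\frac{N+2}{9N}\le\frac16$ exactly when $N\ge4$, which is where that hypothesis comes from. With this inequality in hand, your combination step completes the proof.
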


The factor \(c_{\lambda}\) is strictly less than \(1\) for \(\lambda > \frac{1}{2}\), is minimized at \(\lambda^{\star} = \frac{\sqrt{3} + 1}{2}\) (where it equals \(c_{\lambda^{\star}} = \frac{\sqrt{3} + 1}{3}\)), and this combination biases \(x^{\mathrm{mix}}(x_{0}; N)\) toward \(x_{N}\).
The discrepancy between \(C_{\lambda}\) in \Cref{lem:hf-convex-mix-main} (for continuous time) and \(c_{\lambda}\) in the above lemma is due to a discrete sum appearing in the analysis instead of an integral.

\begin{theorem}
\label{thm:hf-extg-cvx}
Assume that \(f\)  satisfies \emph{\convexassump{}} and \emph{\smoothassump{}}.
Suppose \(x_{K}\) is the output of running \emph{\nameref{alg:disc_phfopt}} for \(K\) iterations with parameter \(\lambda = \frac{\sqrt{3}+1}{2}\), step size \(0 < \eta \leq \frac{1}{\sqrt{L}}\), and discretization steps sequence \(\{N_{k}\}_{k=1}^{K}\) satisfying \(N_{0} = 4\) and \(N_{k}(N_{k} + 1) \geq \frac{3}{\sqrt{3}+1}N_{k - 1}(N_{k - 1} + 1)\) for all \(k \in [K]\).
Then,
\begin{equation*}
    f(x_{K}) - f(x^{\star}) \leq \lp \frac{\sqrt{3}+1}{3}\rp^{K}\left(f(x_{0}) - f(x^{\star}) + \frac{\sqrt{3}-1}{60\eta^{2}}\|x_{0} - x^{\star}\|^{2}\right)~.
\end{equation*}
\end{theorem}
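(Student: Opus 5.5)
The plan is to iterate \Cref{lem:hf-extg-convex-mix-main} across outer iterations using a Lyapunov-type potential. Write \(\lambda = \lambda^{\star} = \frac{\sqrt{3}+1}{2}\), \(c := c_{\lambda^{\star}} = \frac{\sqrt{3}+1}{3}\), and \(M_{k} := N_{k}(N_{k}+1)\). Define
\begin{equation*}
    \Phi_{k} := f(x_{k}) - f(x^{\star}) + \frac{\|x_{k} - x^{\star}\|^{2}}{3\lambda\eta^{2} M_{k}}~.
\end{equation*}
For iteration \(k\), the outer update of \nameref{alg:disc_phfopt} is exactly \(x_{k} = x^{\mathrm{mix}}(x_{k-1}; N_{k})\), computed from a fresh extragradient run started at \((x_{k-1}, \bm{0})\). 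Before invoking the lemma I need \(N_{k} \geq 4\); this follows from \(N_{0} = 4\) and the fact that \(M_{k}\) is nondecreasing, since \(\frac{3}{\sqrt{3}+1} > 1\). \Cref{lem:hf-extg-convex-mix-main} then gives
\begin{equation*}
    \Phi_{k} \leq c\,(f(x_{k-1}) - f(x^{\star})) + \frac{\|x_{k-1} - x^{\star}\|^{2}}{3\lambda\eta^{2}M_{k}}~.
\end{equation*}

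The key step is to absorb the distance term into \(c\,\Phi_{k-1}\). This requires \(\frac{1}{M_{k}} \leq \frac{c}{M_{k-1}}\), i.e.\ \(M_{k} \geq \frac{1}{c} M_{k-1} = \frac{3}{\sqrt{3}+1}M_{k-1}\), which is precisely the growth hypothesis on \(N_{k}\). Hence \(\Phi_{k} \leq c\,\Phi_{k-1}\) for every \(k \in [K]\), and unrolling gives \(\Phi_{K} \leq c^{K}\Phi_{0}\).

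The theorem then follows by dropping the nonnegative distance term in \(\Phi_{K}\), so that \(f(x_{K}) - f(x^{\star}) \leq c^{K}\Phi_{0}\). It remains to evaluate \(\Phi_{0}\) with \(M_{0} = 4\cdot 5 = 20\). The coefficient of \(\|x_{0}-x^{\star}\|^{2}\) is
\begin{equation*}
    \frac{1}{3\lambda \eta^{2}\cdot 20} = \frac{2}{60(\sqrt{3}+1)\eta^{2}} = \frac{\sqrt{3}-1}{60\eta^{2}}~,
\end{equation*}
using \(\frac{2}{\sqrt{3}+1} = \sqrt{3}-1\). This matches the stated constant.

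Given the lemma, there is no real obstacle. The one point to handle with care is that the potential at step \(k\) uses \(M_{k}\), while the lemma's right-hand side uses the same \(M_{k}\) for the previous point. The contraction factor therefore has to be transferred through the ratio \(M_{k-1}/M_{k}\), which is why the growth hypothesis is calibrated to exactly \(1/c\). I also need to note that \(N_{0}\) is a fictitious index that enters only through \(\Phi_{0}\), and is not an actual run of the algorithm.
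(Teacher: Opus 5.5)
Your proposal is correct and matches the paper's proof essentially step for step. It uses the same potential with \(N_k(N_k+1)\) in the denominator, absorbs the distance term into \(c_{\lambda^\star}\) times the previous potential via the growth condition, unrolls the recursion, and evaluates the initial potential at \(N_0=4\). Your check that \(N_k \geq 4\) for every \(k\), which is needed to invoke the mixing lemma, is a small detail the paper leaves implicit.
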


Drawing a comparison to \Cref{thm:hf-cts-cvx}, we see a geometric rate of decrease over iterations \(K\), while the number of discretization steps per iteration increases exponentially.
However, due to the fact that the \emph{squared} discretization steps increases by the reciprocal of the factor that the optimality gap decreases by at each iteration, we obtain a \(\frac{1}{\sqrt{\varepsilon}}\) scaling in the total gradient complexity to obtain an \(\varepsilon\)-accurate solution.
This is stated in the following corollary. The proofs of the previous theorem and the following corollary are given in \Cref{app:prf:hf-extg-cvx}.

\begin{corollary}
\label{cor:hf-extg-cvx}
Let \(f\) satisfy \emph{\convexassump{}} and \emph{\smoothassump{}}.
To obtain \(x_{K}\) that is an \(\varepsilon\)-accurate minimizer, it suffices to run \emph{\nameref{alg:disc_phfopt}} with parameter \(\lambda = \frac{\sqrt{3}+1}{2}\) and step size \(\eta = \frac{1}{\sqrt{L}}\) for \(K\) iterations and discretization step sequence \(\{N_{k}\}_{k=1}^{K}\) such that
\begin{equation*}
    K = \left\lceil 2\left(\log \frac{3}{1+\sqrt{3}}\right)^{-1} \cdot \log \lp\sqrt{\frac{L}{\varepsilon}}\,\|x_0-\xstar\|\rp\right\rceil;~~ N_k=\left\lceil \sqrt{\frac{3}{1+\sqrt{3}}}N_{k-1}+\tfrac{1}{2} \right\rceil;~~N_0=4.
\end{equation*}
The total number of gradient evaluations satisfies $N_{\mathrm{tot}} \leq 1620 \sqrt{\frac{L}{\varepsilon}}\,\|x_0-\xstar\|~.$
\end{corollary}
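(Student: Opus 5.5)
The plan is to use \Cref{thm:hf-extg-cvx} with \(\eta = 1/\sqrt{L}\) and then bound two things: the number of outer iterations \(K\), and the total gradient count \(\sum_{k} 2N_k\). Each extragradient step uses two gradient evaluations, at \(x_{n+1/2}\) and at \(x_{n+1}\). With \(x_{k-1}^{(k)}\) plugged into the next stage, \(\nabla f(x_{n+1})\) is reused at the following half-step only up to constants, so I will simply charge \(2\) per step.

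\textbf{Step 1: the schedule is admissible.} Write \(r = \sqrt{3/(1+\sqrt{3})} > 1\) and set \(N_k = \lceil r N_{k-1} + \tfrac12\rceil\). Then \(N_k \ge rN_{k-1} + \tfrac12\), so
\[
N_k(N_k+1) \ge \bigl(rN_{k-1} + \tfrac12\bigr)\bigl(rN_{k-1} + \tfrac32\bigr) \ge r^2 N_{k-1}^2 + r^2 N_{k-1} = r^2 N_{k-1}(N_{k-1}+1).
\]
The middle inequality holds because \(2rN_{k-1} \ge r^2 N_{k-1}\), which needs \(r \le 2\) and is true here. This is exactly the growth condition required in \Cref{thm:hf-extg-cvx}, and \(N_0 = 4\) also gives \(N_k \ge 4\), as \Cref{lem:hf-extg-convex-mix-main} requires.

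\textbf{Step 2: choice of \(K\).} With \(\eta^{-2} = L\), the theorem gives
\[
f(x_K) - f^\star \le r^{-2K}\Bigl(f(x_0) - f^\star + \tfrac{\sqrt3-1}{60} L\|x_0-x^\star\|^2\Bigr).
\]
By smoothness, \(f(x_0) - f^\star \le \tfrac{L}{2}\|x_0-x^\star\|^2\), so the bracket is at most \(L\|x_0-x^\star\|^2\). It therefore suffices that \(r^{2K} \ge L\|x_0-x^\star\|^2/\varepsilon\), which is equivalent to
\[
K \ge \frac{2\log\bigl(\sqrt{L/\varepsilon}\,\|x_0-x^\star\|\bigr)}{\log(3/(1+\sqrt3))}.
\]
This is the stated \(K\); if the logarithm is nonpositive, \(K = 0\) already works.

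\textbf{Step 3: total count, which is the main bookkeeping obstacle.} Unrolling the recursion with \(N_k \le rN_{k-1} + \tfrac32\) gives
\[
N_k \le r^k\Bigl(N_0 + \tfrac{3/2}{r-1}\Bigr).
\]
Summing the geometric series,
\[
\sum_{k=1}^K N_k \le \frac{r}{r-1}\Bigl(4 + \tfrac{3}{2(r-1)}\Bigr) r^K.
\]
By the choice of \(K\), \(r^K \le r\cdot\sqrt{L/\varepsilon}\,\|x_0-x^\star\|\), since the ceiling adds at most one factor of \(r\). Multiplying by \(2\) gradient evaluations per step leaves an explicit constant. Numerically \(r \approx 1.0478\), so \(r-1 \approx 0.0478\) and this crude constant comes out in the thousands, which may exceed \(1620\). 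If it does, I would tighten the unrolling. Since \(N_k \ge 4\), the additive \(+1\) from the ceiling can be absorbed multiplicatively: \(N_k \le (r + \tfrac{3}{8})N_{k-1}\) is too lossy, so instead I would bound the sum directly using \(N_k(N_k+1)\) growth, or take a sharper recursion estimate. Pinning the constant down to \(1620\) is the fiddly part; the \(O(\sqrt{L/\varepsilon}\,\|x_0-x^\star\|)\) scaling follows directly from the geometric sum.
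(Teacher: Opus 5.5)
Your argument is the paper's proof step for step. It uses the same check that $N_k(N_k+1)\ge r^2N_{k-1}(N_{k-1}+1)$ via $r\le 2$, and the same bound on the bracket from $f(x_0)-f^\star\le \frac{L}{2}\|x_0-x^\star\|^2$. It also uses the same unrolling $N_k\le r^k\bigl(N_0+\frac{3}{2(r-1)}\bigr)$ together with $r^K\le r\sqrt{L/\varepsilon}\,\|x_0-x^\star\|$.

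The one thing you left undone is the arithmetic, and that is a real omission because the statement asserts the explicit constant $1620$. Your worry that the constant might exceed $1620$ is unfounded: your ``crude'' bound is exactly where $1620$ comes from. Expanding, your constant is $\frac{4r^2}{r-1}+\frac{3r^2}{2(r-1)^2}$. With $r^2=\frac{3(\sqrt3-1)}{2}<1.09808$ and $r>1.04789$ (since $1.04789^2<1.098076$), one gets $\frac{4r^2}{r-1}<91.72$ and $\frac{3r^2}{2(r-1)^2}<718.19$. Hence $\sum_{k=1}^K N_k\le 810\sqrt{L/\varepsilon}\,\|x_0-x^\star\|$, and charging two gradients per extragradient step gives $1620$.

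The margin is thin, about $1619.7$ against $1620$. You need roughly five significant digits in $r-1$: with your rounded value $r-1\approx 0.0478$ the same expression evaluates to about $1626$. That rounding is probably why the bound looked too weak to you. No sharper recursion is needed, so the fallback ideas at the end of your Step 3 can be dropped.
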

\section{Conclusion}
In this work, we provide a new perspective on how the Hamiltonian flow can be used for convex optimization.
Through a novel differential inequality along Hamiltonian flow for convex objective (\Cref{lem:hf-convex-main}), we show that a suitably chosen aggregate of the Hamiltonian flow trajectory is well-suited for optimization purposes, and leads to the idealized algorithm \nameref{alg:phfopt}.
To make this practical in settings where \ref{eq:HamFlow} cannot be simulated exactly, we propose a discretized version based on the extragradient integrator in \nameref{alg:disc_phfopt}, which only requires access to the gradients of \(f\).
We prove~\nameref{alg:disc_phfopt} achieves accelerated convergence rates for smooth convex and strongly convex optimization.

Our results spur many interesting future directions.
We note that our analysis predicts the same qualitative rate for different aggregation schemes such as the uniform average or the minimizer along the trajectory.
We conjecture that choosing the next iterate to be the minimizer along the trajectory converges at a strictly better rate than other aggregation schemes.
While we give guarantees for minimizing convex functions, it would be interesting to study whether \nameref{alg:phfopt} or \nameref{alg:disc_phfopt} can provably optimize other classes of functions, such as those satisfying a Polyak-\L{}ojaciewicz inequality, or even non-convex functions. 
In this work, we consider two integrators that dissipate the Hamiltonian energy, namely implicit integrator and extragradient integrator.
However, our experiments suggest that the leapfrog integrator exhibits a convergence behavior similar to the implicit and extragradient integrators; it would be interesting to study this theoretically.
Finally, while the results of this paper concern optimization, it would be interesting to see if the broader techniques developed in this work can be translated to developed \textsf{HMC}-based algorithms that perform \emph{accelerated sampling}.

\paragraph{Acknowledgments.}
Qiang Fu, Siddharth Mitra, Xiuyuan Wang, and Andre Wibisono were supported by NSF awards CCF-2403391 and CAREER CCF-2443097.
Vishwak Srinivasan and Ashia Wilson were supported in part by Assicurazioni Generali S.p.A. through MIT Award 036189-00006.

\newpage

\bibliography{references.bib}

\newpage
\appendix
\part{Appendix}
\setcounter{parttocdepth}{3}
\renewcommand \beforeparttoc{}
\renewcommand \afterparttoc{}
\parttoc
\newpage

\crefalias{section}{appendix}
\crefalias{subsection}{appendix}
\crefalias{subsubsection}{appendix}
\section{Additional Related Works}\label{app:AdditionalRelatedWorks}

\paragraph{Accelerated gradient flow and accelerated gradient descent.}

The accelerated gradient method of \citet{nesterov1983method} is the following update:
\begin{equation*}
    \begin{cases}
    x_{k+1} = y_k - \eta \nabla f(y_k), \\
    y_{k+1} = x_{k+1} + \gamma_k (x_{k+1} - x_k).
    \end{cases}\tag{\textsf{AGD}}\label{eq:AGD}
\end{equation*}

The dynamical systems perspective of \ref{eq:AGD} presented in \citet{su2016differential} spawned many follow-up works pertaining to algorithm design and generalizations of the techniques.
\citet{wibisono2016variational} develop a variational perspective of accelerated methods in optimization via the principle of least action using the Bregman Lagrangian, which recover known accelerated methods and derive new and faster accelerated methods beyond \ref{eq:AGD}, as well as the discrete-time algorithms with matching convergence rates.
Additional papers that adopt the dynamical perspective of acceleration include \citet{hu2017dissipativity,maddison2018hamiltonian,muehlebach2019dynamical,even2021continuized,attouch2022first,shi2022understanding}.
This continues to be an active area of research.

\paragraph{Hamiltonian dynamics for optimization.}
Taking this dynamical perspective further, the \emph{Hamiltonian flow} (\ref{eq:HamFlow}) corresponds to an undamped second-order system that conserves total energy and therefore does not converge to a minimizer in continuous time.
While there have been many works that study \textit{damped} Hamiltonian systems (such as those derived from accelerated gradient methods), to the best of our knowledge, only a small number of prior works have investigated optimization algorithms based on \textit{conservative} Hamiltonian dynamics, as we study in this paper.

As mentioned in \cref{sec:intro}, \citet{teel2019first} propose \ref{eq:HF-opt-update}, and in their scheme, the velocity is reset to zero either when the trajectory approaches the boundary of the region 
$\{(x,y)\in \mathbb{R}^d\times\mathbb{R}^d: \langle\nabla f(x), y\rangle \le 0,\ \|y\|^2 \ge \|\nabla f(x)\|^2/L\}$
where $L$ is the smoothness constant, or when a fixed timer expires; see the pseudocode below. 
They establish uniform global stability and (non-accelerated) convergence guarantees for minimizing smooth and strongly convex functions, albeit only in the idealized setting assuming we can simulate the continuous-time Hamiltonian flow.

\begin{algorithm}[H]
\DontPrintSemicolon
\caption{Hybrid Hamiltonian Algorithm with Resets (\citet{teel2019first})}
\SetKwInOut{Input}{Input}\SetKwInOut{Output}{Output}
\SetAlgoLined
\Input{Initialization: $X_0 \in \R^d$, number of iterations: $K \in \mathbb{N}$, maximum integration time: $T \in (0,\infty)$, threshold parameter: $\bar L \ge 0$}
\For{$k = 1$ \KwTo $K$}{
    Solve \ref{eq:HamFlow} from \((X_{k - 1}, \bm{0})\) for time \(T\) to obtain trajectory \(((X_{t}^{(k)}, Y_{t}^{(k)}))_{t \in [0, T]}\).

    Compute $T_k=
    \inf\left\{
        t \in [0,T]:
\langle\nabla f(X_t^{(k)}),Y_t^{(k)} \rangle = 0
        \ \text{and}\
        \|Y_t^{(k)}\|^2
        \ge \frac{\|\nabla f(X_t^{(k)})\|^2}{\bar L}
    \right\}
    $, with the convention that $T_k = T$ if this set is empty.

    Set $X_k = X_{T_k}^{(k)}$.
}
\Return $X_K$
\end{algorithm}

\citet{Scagliotti_2021} propose a different restart rule for the same conservative dynamics \eqref{eq:HF-opt-update}. Starting from $(X_{k-1},\bm 0)$, they evolve the Hamiltonian flow until the \emph{mean dissipation}
$t \mapsto \frac{\|Y_t^{(k)}\|^2}{2t}$ attains a local maximum. Along the Hamiltonian flow, this stopping rule can be written as
$$
T_k
=
\inf\left\{
t>0:
-\,t\langle \nabla f(X_t^{(k)}), Y_t^{(k)}\rangle
-\frac12\|Y_t^{(k)}\|^2 <0
\right\},
$$
see the pseudocode below. They proved that this restart time is finite for coercive objectives, and uniformly bounded for $\alpha$-strongly convex and $L$-smooth $f$. As a consequence, the resulting piecewise conservative trajectory converges linearly:
$$
f(\widetilde X(t))-f(x^\star)
\le
\left(1+\frac{\alpha}{L}\right)^{-\lfloor t/T_R\rfloor}
\bigl(f(X_0)-f(x^\star)\bigr),
\qquad
T_R=\frac{32L}{\alpha\sqrt{\alpha}},
$$
and in particular
$$
f(X_k)-f(x^\star)
\le
\left(1+\frac{\alpha}{L}\right)^{-k}
\bigl(f(X_0)-f(x^\star)\bigr).
$$

\begin{algorithm}[H]
\DontPrintSemicolon
\caption{Hamiltonian Method of \citet{Scagliotti_2021}}
\SetKwInOut{Input}{Input}\SetKwInOut{Output}{Output}
\SetAlgoLined
\Input{Initialization: $X_0 \in \R^d$, number of iterations: $K \in \mathbb{N}$}
\For{$k = 1$ \KwTo $K$}{
    Solve \eqref{eq:HamFlow} from $(X_{k-1},\bm 0)$ until the first time
    $T_k=\inf\left\{t>0:\frac{\rmd}{\rmd t}\left(\frac{\|Y_t^{(k)}\|^2}{2t}\right)<0\right\}.$

    Set $X_k = X_{T_k}^{(k)}.$
}
\Return $X_K$
\end{algorithm}

\citet{diakonikolas2021generalized} analyze a class of \emph{generalized} Hamiltonian dynamics driven by a time-dependent Hamiltonian
$H(x,y,\tau) = h(\tau)\, f(x/\tau) + \psi^*(y)$,
where $h(\tau)>0$ is a function of the scaled time $\tau$, and $\psi^*$ is a strongly convex and differentiable function. 
They show that along the Hamiltonian flow, the averaged gradient
$\bigl\|\frac{1}{t}\int_0^t \nabla f(x_\tau)\, d\tau\bigr\|$
decays at an $t^{-1}$ rate. Moreover, they demonstrate that a broad family of momentum methods in both Euclidean and non-Euclidean geometries can be derived from these generalized dynamics, encompassing classical algorithms such as \ref{eq:AGD} and other accelerated methods~\citep{wibisono2016variational,wilson2021lyapunov}. 
These generalized constructions are conceptually distinct from the energy-conserving Hamiltonian dynamics studied in this work, where we use the standard Hamiltonian function which is time-independent with a quadratic kinetic energy.

\citet{de2023improving} propose a Hamiltonian-based method with Hamiltonian function
$H(x,y)=\lambda\log (F(x)-F_0)+\log(\|y\|^2)$,
where $\lambda, F_{0}$ are user-specified parameters.
They performs an empirical evaluation of this algorithm, but do not provide a convergence rate guarantee.

\paragraph{Accelerated optimization via Hamiltonian dynamics.}
A recent work by \citet{wang2025frictionlesshamiltoniandescentcoordinate} studies \ref{eq:HF-opt-update} for minimizing strongly convex quadratics
$f(x)=\frac{1}{2}x^{\top}Ax-b^{\top}x$ with $\alpha I\preceq A \preceq LI$.
In this quadratic setting, the Hamiltonian dynamics~\ref{eq:HamFlow} admit a closed-form solution, which enables a sharp analysis. 
By selecting a sequence of integration times $\{T_{k}\}$ according to the roots of Chebyshev polynomials,~\citet{wang2025frictionlesshamiltoniandescentcoordinate} proves that \ref{eq:HF-opt-update} converges to the minimizer of \(f\) at a rate of $\exp(-k/\sqrt{\kappa}))$ after $k$ iterations, with a total integration time to achieve an \(\varepsilon\)-accurate solution that scales as $1/\sqrt{\alpha}$, matching accelerated time complexity in \cref{tab:rates-agf-agd}.
A more systematic investigation of \ref{eq:HamFlow} as a foundation for first-order optimization was conducted by \citet{fu2025hamiltoniandescentalgorithmsoptimization}, wherein they study different integration schedules and discretization schemes, both for the general convex and strongly convex objectives. In particular, they show that \ref{eq:HamFlow} with a short integration time recovers the convergence rate of gradient descent.
Moreover, by randomizing the integration time, the resulting dynamics exhibit acceleration convergence rates analogous to the accelerated gradient flow.
The corresponding discretized algorithm, termed randomized Hamiltonian gradient descent (\textsf{RHGD}), achieves accelerated convergence rates that match the rates of \ref{eq:AGD}. These results demonstrate that both non-accelerated and accelerated first-order methods can be viewed as arising from Hamiltonian dynamics under different integration and discretization schemes.
Whereas the work of~\citet{fu2025hamiltoniandescentalgorithmsoptimization} provides accelerated rates for randomized integration times, so the guarantees only hold in expectation or in high probability, in this work we provide deterministic accelerated rates via long integration times.

\section{Background on Hamiltonian Dynamics}\label{app:HF_Background}

We present an extended discussion about  Hamiltonian dynamics and numerical integrators.
In \Cref{sec:ham-dynamics-def}, we restricted our attention to the energy function \(H(x, y) = f(x) + \frac{1}{2}\|y\|^{2}\) as it was more pertinent to optimization.
However, the Hamiltonian can be defined more generally over the phase space \(\bbR^{2d}\).
A key definition in the Hamiltonian dynamics is the \emph{symplectic matrix} \(\Omega \in \bbR^{2d \times 2d}\):
\begin{equation*}
    \Omega := \begin{bmatrix} \bm{0} & \rmI_{d} \\ -\rmI_{d} & \bm{0} \end{bmatrix}
\end{equation*}
where $\rmI_d$ is the identity matrix.
Note that $\Omega$ is a skew-symmetric matrix, i.e., \(\Omega^{\top} = -\Omega\).

\begin{definition}\label{def:hf-apdx}
Let $H \colon \R^{2d} \to \R$ be a continuously differentiable \emph{Hamiltonian} function.
The Hamiltonian flow with respect to \(H\) is the system of differential equations on the phase space given by
\begin{equation}\label{eq:hf-apdx}
\dot Z_t = \Omega \nabla H(Z_t)~.\tag{\textsf{HF\textsubscript{gen}}}
\end{equation}
\end{definition}

When the Hamiltonian function is separable, \(H(x, y) = f(x) + g(y)\), the Hamiltonian dynamics in \ref{eq:hf-apdx} reduces to
\begin{equation*}
    \dot X_t = \nabla g(Y_t)~,\qquad \dot Y_t = - \nabla f(X_t)~.
\end{equation*}
Note that when \(g(y) = \frac{1}{2}\|y\|^{2}\), this recovers \ref{eq:HamFlow} in~\Cref{sec:ham-dynamics-def}.
Throughout this paper, we assume that the Hamiltonian flow admits a unique solution for any initial condition \(Z_{0} = (X_{0}, Y_{0}) = (x, y)\), and the solution at time \(t\) is given by \(Z_{t} = \HF((x, y); t)\) as introduced in \Cref{sec:ham-dynamics-def}.
See e.g., \citet[\S3.1, \S3.2]{Leimkuhler_Reich_2005} for additional details and examples of Hamiltonian functions and dynamics in physical contexts.

\subsection{Properties of the Hamiltonian flow}

The Hamiltonian flow satisfies desirable properties in addition to the conservation property stated as \Cref{lem:hamflow-conserve} (stated for a specific Hamiltonian, but holds more generally).
These properties also play a crucial role in the development of Hamiltonian Monte Carlo algorithm for sampling.
Throughout, we assume that the Hamiltonian is a continuously differentiable function over the phase space.

\paragraph{Conservation of Hamiltonian}
\begin{lemma}
For any time \(t > 0\) and \(x, y \in \bbR^{d}\), \(H(\HF((x, y); t)) = H(x, y)\).
\end{lemma}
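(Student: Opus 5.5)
The plan is to differentiate the Hamiltonian along the trajectory and show that the derivative vanishes identically. Write $Z_t = \HF((x,y);t)$, so that $\dot Z_t = \Omega \nabla H(Z_t)$ as in \ref{eq:hf-apdx}. We assume, as the paper does, that this solution exists, is unique, and is continuously differentiable in $t$. Since $H$ is continuously differentiable, the map $t \mapsto H(Z_t)$ is differentiable. The chain rule then gives
\begin{equation*}
    \frac{\rmd}{\rmd t} H(Z_t) = \langle \nabla H(Z_t), \dot Z_t\rangle = \langle \nabla H(Z_t), \Omega \nabla H(Z_t)\rangle~.
\end{equation*}

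The key step is that this quadratic form vanishes because $\Omega$ is skew-symmetric. For any $v \in \bbR^{2d}$ we have $\langle v, \Omega v\rangle = \langle \Omega^{\top} v, v\rangle = -\langle \Omega v, v\rangle$, which forces $\langle v, \Omega v\rangle = 0$. Taking $v = \nabla H(Z_t)$ shows that $\frac{\rmd}{\rmd t}H(Z_t) = 0$ for all $t \ge 0$. By the mean value theorem, $H(Z_t) = H(Z_0) = H(x,y)$ for every $t > 0$.

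As a sanity check, specialize to $H(x,y) = f(x) + \frac12\|y\|^2$, where $\dot X_t = Y_t$ and $\dot Y_t = -\nabla f(X_t)$. The derivative becomes $\langle \nabla f(X_t), Y_t\rangle + \langle Y_t, -\nabla f(X_t)\rangle = 0$, which recovers \Cref{lem:hamflow-conserve}.

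There is no real obstacle here. The only point needing care is regularity: the solution must be $C^1$ in time so that the chain rule applies. This follows from the standing assumption that the flow is well defined, because the vector field $\Omega\nabla H$ is continuous and so any solution of the ODE is automatically $C^1$.
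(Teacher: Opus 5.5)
Your proposal is correct and is essentially the paper's argument: differentiate \(t \mapsto H(Z_t)\) with the chain rule to get \(\langle \nabla H(Z_t), \Omega \nabla H(Z_t)\rangle\), which vanishes by skew-symmetry of \(\Omega\). Your extra remarks on \(C^1\) regularity and the separable-case check are valid, but they go beyond what the paper's standing assumptions require.
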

\begin{proof}
Let \(Z_t = \HF((x, y); t)\).
Since $H$ is continuously differentiable, by the chain rule we have
$$
\frac{\rmd}{\rmd t} H(Z_t)
= \nabla H(Z_t)^\top \dot Z_t
= \nabla H(Z_t)^\top \Omega \nabla H(Z_t)=0
$$
where we use the fact that \(v^{\top}\Omega v = 0\) for any vector $v$, since $\Omega$ is skew-symmetric.
\end{proof}

\paragraph{Symplecticity and preservation of volume}

\begin{definition}\label{def:symplectic-map}
A differentiable map $\Phi \colon \bbR^{2d}\to\bbR^{2d}$ is called \emph{symplectic} if for every $z\in\bbR^{2d}$, its Jacobian $\nabla \Phi(z)$ satisfies
\[
\nabla\Phi(z)^\top \Omega \nabla\Phi(z)=\Omega~.
\]
\end{definition}

\begin{definition}\label{def:volpreserve-map}
A differentiable map \(\Phi \colon \bbR^{2d} \to \bbR^{2d}\) is called \emph{volume-preserving} if for every \(z \in \bbR^{2d}\), its Jacobian $\nabla \Phi(z)$ satisfies
\begin{equation*}
    |\det(\nabla\Phi(z))| = 1~.
\end{equation*}
\end{definition}

The volume-preservation property is named as such since for any measurable set \(A \in \bbR^{2d}\) and \(\Phi\) that satisfies $|\det(\nabla\Phi(z))| = 1$, we have \(\mathrm{vol}(\Phi(A)) = \mathrm{vol}(A)\).
This follows by applying the change-of-variables formula, which gives:
\[
\mathrm{vol}(\Phi(A))
=\int_A \left|\det\bigl(\nabla \Phi(z_0)\bigr)\right|\,\rmd z_0
=\int_A \rmd z_0
=\mathrm{vol}(A).
\]

\begin{lemma}\label{lem:HF-symplectic}
For any \(t > 0\), the Hamiltonian flow map \((x, y) \mapsto \mathsf{HF}((x, y); t)\) associated with a twice continuously Hamiltonian \(H\) is both symplectic and volume-preserving.
\end{lemma}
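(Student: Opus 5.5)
The plan is to prove symplecticity first and then read off volume preservation. Write $\Phi_t(z) = \HF(z; t)$ and let $M_t(z) = \nabla \Phi_t(z)$ be its Jacobian. Since $H$ is twice continuously differentiable, the vector field $z \mapsto \Omega \nabla H(z)$ is $C^1$. Standard ODE theory (differentiable dependence on initial conditions) then gives that $\Phi_t$ is differentiable in $z$. It also shows that $M_t(z)$ solves the variational equation
\begin{equation*}
\dot M_t = \Omega \nabla^2 H(\Phi_t(z)) \, M_t~, \qquad M_0 = \rmI_{2d}~.
\end{equation*}
We fix $z$ and abbreviate $S_t = \nabla^2 H(\Phi_t(z))$, which is symmetric.

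\textbf{Symplecticity.} Consider $G_t = M_t^\top \Omega M_t$. Differentiating and using the variational equation gives
\begin{equation*}
\dot G_t = M_t^\top S_t \Omega^\top \Omega M_t + M_t^\top \Omega \Omega S_t M_t~.
\end{equation*}
This uses $(\Omega S_t M_t)^\top = M_t^\top S_t \Omega^\top$. Since $\Omega^\top \Omega = \rmI_{2d}$ and $\Omega\Omega = -\rmI_{2d}$, the two terms are $M_t^\top S_t M_t$ and $-M_t^\top S_t M_t$, so they cancel. Hence $\dot G_t = 0$ and $G_t = G_0 = \Omega$ for all $t$, which is exactly the condition in \Cref{def:symplectic-map}.

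\textbf{Volume preservation.} Taking determinants in $M_t^\top \Omega M_t = \Omega$ gives $\det(M_t)^2 \det\Omega = \det \Omega$. Since $\det \Omega = 1 \neq 0$, this yields $\det(M_t)^2 = 1$, so $|\det M_t| = 1$ as required by \Cref{def:volpreserve-map}. As an alternative check, Liouville's formula gives $\frac{\rmd}{\rmd t}\det M_t = \mathrm{tr}(\Omega S_t)\det M_t$. Here $\mathrm{tr}(\Omega S_t) = 0$, because it is the trace of a product of a skew-symmetric and a symmetric matrix. Combined with $\det M_0 = 1$, this gives $\det M_t = 1$ for all $t$, so the determinant not only has modulus one but equals one.

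\textbf{Main obstacle.} The only nonroutine point is justifying that $\Phi_t$ is differentiable and that its Jacobian satisfies the variational equation. This requires the $C^2$ assumption on $H$ and the standing assumption that solutions exist uniquely for all $t$. I would cite the standard theorem on differentiable dependence of ODE solutions on initial data rather than reprove it.
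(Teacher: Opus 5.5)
Your proposal is correct and follows essentially the same route as the paper: it uses the variational equation for the Jacobian, shows $M_t^\top \Omega M_t$ is constant via $\Omega^\top\Omega = \rmI_{2d}$ and $\Omega^2 = -\rmI_{2d}$, and then takes determinants. The only cosmetic difference is that you sharpen $|\det M_t| = 1$ to $\det M_t = 1$ via Liouville's formula, whereas the paper gets this from continuity of $t \mapsto \det J(t)$ together with $\det J(0) = 1$.
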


\begin{proof}
For any \(z_{0} \in \bbR^{2d}\), define \(J(t) := \left.\nabla_{z} \HF(z; t) \right|_{z = z_{0}}\).
Since \(\dot{z}_t = \Omega \nabla H(z_t)\) along Hamiltonian flow \ref{eq:hf-apdx}, we have
\begin{equation*}
    \dot{J}(t) = \Omega \nabla^{2}H(z_t) J(t)~; \quad J(0) = \rmI_{2d}~. 
\end{equation*}

Define \(M(t) := J(t)^{\top}\Omega J(t)\).
By the product rule,
\begin{align*}
    \dot{M}(t) &= \dot{J}(t)^{\top}\Omega J(t) + J(t)^{\top}\Omega \dot{J}(t) \\
    &= J(t)^{\top} \nabla^{2}H(z_t)\Omega ^\top \Omega J(t) + J(t)^{\top}\Omega^{2}\nabla^{2} H(z_t) J(t)\\
    &=-J(t)^{\top} \nabla^{2}H(z_t)J(t) + J(t)^{\top}\nabla^{2} H(z_t) J(t)=0.
\end{align*}
In the third equality, we used the fact that \(\Omega^{2} = -\rmI_{2d}\) and \(\Omega^{\top}\Omega = \rmI_{2d}\).
In simpler terms, for \(M(t)\) is a constant function.
Consequently,
\begin{equation*}
    M(t) = M(0) \Rightarrow J(t)^{\top}\Omega J(t) = \Omega~.
\end{equation*}

Taking determinants on both sides and using the product rule results in
\begin{equation*}
    \det(J(t))^{2} = \det(\Omega)^{2} = 1~.
\end{equation*}
However since \(\det(J(0)) = 1\) and since \(t \mapsto J(t)\) is continuous, \(\det J(t) = 1\).
\end{proof}

Consequently, for any measurable set $A\subseteq \bbR^{2d}$ and for all $t > 0$:
\[
\mathrm{vol}\bigl(\mathsf{HF}(A;t)\bigr)=\mathrm{vol}(A).
\]

\paragraph{Time reversibility} 

\begin{lemma}\label{lem:hf-time-reversible}
Assume that the Hamiltonian \(H\) satisfies \(H(x, y) = H(x, -y)\) for all \(x, y\), and let $\mathsf{HF}(\cdot;t)$ denote the corresponding Hamiltonian flow map as defined by \ref{eq:hf-apdx}.
If \((x_{t}, y_{t}) = \mathsf{HF}((x_{0}, y_{0}); t)\) for any time \(t \geq 0\) and initial conditions \((x_{0}, y_{0})\), then
$\mathsf{HF}((x_t,-y_t);t)=(x_0,-y_0)$.
\end{lemma}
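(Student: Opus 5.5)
The plan is to construct the reversed trajectory explicitly and check, by uniqueness of solutions, that it is the Hamiltonian flow from \((x_t,-y_t)\). Fix \(t \ge 0\), and let \((x_s, y_s) = \mathsf{HF}((x_0,y_0); s)\) for \(s \in [0,t]\). For \(s \in [0,t]\) define
\[
(\tilde x_s, \tilde y_s) := (x_{t-s}, -y_{t-s}).
\]
I will show that \(s \mapsto (\tilde x_s,\tilde y_s)\) solves \ref{eq:hf-apdx} with initial condition \((\tilde x_0,\tilde y_0) = (x_t,-y_t)\). Since we assume throughout that solutions of the Hamiltonian flow are unique, it follows that \(\mathsf{HF}((x_t,-y_t); s) = (\tilde x_s,\tilde y_s)\). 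Evaluating at \(s=t\) then gives \((x_0,-y_0)\), as claimed.

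The key step is translating the symmetry \(H(x,y) = H(x,-y)\) into a statement about gradients. Differentiating the identity gives
\[
\nabla_x H(x,-y) = \nabla_x H(x,y), \qquad \nabla_y H(x,-y) = -\nabla_y H(x,y).
\]
Written componentwise, \ref{eq:hf-apdx} reads \(\dot x = \nabla_y H(x,y)\) and \(\dot y = -\nabla_x H(x,y)\). By the chain rule,
\[
\frac{\rmd}{\rmd s}\tilde x_s = -\dot x_{t-s} = -\nabla_y H(x_{t-s}, y_{t-s}) = \nabla_y H(x_{t-s}, -y_{t-s}) = \nabla_y H(\tilde x_s,\tilde y_s).
\]
Similarly,
\[
\frac{\rmd}{\rmd s}\tilde y_s = \dot y_{t-s} = -\nabla_x H(x_{t-s}, y_{t-s}) = -\nabla_x H(x_{t-s}, -y_{t-s}) = -\nabla_x H(\tilde x_s,\tilde y_s).
\]
So the reversed curve satisfies the same ODE. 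The initial condition holds by definition.

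There is no real obstacle. The only point needing care is the sign bookkeeping in the gradient identities: the time reversal produces a minus sign, which the momentum flip must cancel in the \(x\)-equation but not in the \(y\)-equation. This is exactly where the evenness of \(H\) in \(y\) enters.
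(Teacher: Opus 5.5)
Your proof is correct and follows the paper's proof exactly: define the time-reversed, momentum-flipped curve $(x_{t-s},-y_{t-s})$, use the parity identities for $\nabla_x H$ and $\nabla_y H$ to show it solves \ref{eq:hf-apdx} from $(x_t,-y_t)$, and conclude by uniqueness of solutions. You also state explicitly the evenness $\nabla_x H(x,-y)=\nabla_x H(x,y)$, which the paper uses without stating it.
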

\begin{proof}
Let $(X_s,Y_s)_{s\ge 0}$ be the unique solution of \ref{eq:hf-apdx} with
$(X_0,Y_0)=(x_0,y_0)$, and suppose that $(X_t,Y_t)=(x_t,y_t)$ for some $t\ge 0$.
Define a new pair of curves on $[0,t]$ by
\[
\widetilde X_s := X_{t-s},
\qquad
\widetilde Y_s := -\,Y_{t-s}.
\]
Note that the derivative of an even function is odd.
The assumption about \(H\) states that for any \(x\), \(y \mapsto H(x, y)\) is an even function, and hence \(\nabla_{y}H(x, y) = -\nabla_{y}H(x, -y)\).
Therefore, by the chain rule,
\[
\dot{\widetilde X}_s
= -\dot X_{t-s}
= -\nabla_y H(X_{t-s},Y_{t-s})
= \nabla_y H(X_{t-s},-Y_{t-s})
= \nabla_y H(\widetilde X_s,\widetilde Y_s)~.
\]
Similarly,
\[
\dot{\widetilde Y}_s
= -(-\dot Y_{t-s})
= \dot Y_{t-s}
= -\nabla_x H(X_{t-s},Y_{t-s})
= -\nabla_x H(X_{t-s},-Y_{t-s})
= -\nabla_x H(\widetilde X_s,\widetilde Y_s)~.
\]
So $(\widetilde X_s,\widetilde Y_s)$ also satisfies the Hamiltonian dynamics
$\dot X_s=Y_s$, $\dot Y_s=-\nabla f(X_s)$ on $[0,t]$.
Moreover, $(\widetilde X_0,\widetilde Y_0)=(X_t,-Y_t)=(x_t,-y_t)$ and
$(\widetilde X_t,\widetilde Y_t)=(X_0,-Y_0)=(x_0,-y_0)$.
By uniqueness of solutions, the solution starting from $(x_t,-y_t)$ must equal
$(\widetilde X_s,\widetilde Y_s)$ on $[0,t]$, and in particular at time $t$ we obtain
$\mathsf{HF}((x_t,-y_t);t)=(x_0,-y_0)$.
\end{proof}
\subsection{Discussion of integrators}\label{app:sec:integrator-discuss}
As discussed in \Cref{sec:intro}, the Hamiltonian dynamics \ref{eq:hf-apdx} is exactly solvable only in special cases, for example, when $H(x, y)=x^\top Ax+y^\top By$. In general, we need to \textit{discretize} the dynamics. Here we discuss several elementary discretization schemes; see \citet[Chap. 2]{hairer2006geometric}, \citet[Chaps. 2, 4]{Leimkuhler_Reich_2005} for a more detailed exposition.

\subsubsection{Explicit and implicit integrators}

\begin{definition}\label{defn:exp_imp_apdx}
The \emph{explicit integrator} / forward Euler integrator of the Hamiltonian flow \emph{(\ref{eq:hf-apdx})} from initial values \((x_{0}, y_{0})\) with step size \(\eta > 0\) generates the sequence \(\{(x_{n}, y_{n})\}_{n \geq 0}\) where \((x_{n}, y_{n})\) satisfies the recursion
\begin{equation}
\label{eq:explicit-integ-gen}
\begin{aligned}
   x_{n+1}-x_n &= \eta \cdot \nabla_{y}H(x_n, y_n)~, \\
    y_{n+1}-y_n &= -\eta \cdot \nabla_{x}H(x_n, y_n)~.
    \end{aligned}
\end{equation}
More succinctly, using \(z_{n} := (x_{n}, y_{n})\),
\begin{equation*}
    z_{n + 1} - z_{n} = \eta \cdot \Omega \nabla H(z_{n})~.
\end{equation*}
\end{definition}

\begin{definition}
The \emph{implicit integrator} / backward Euler integrator of the Hamiltonian flow \emph{(\ref{eq:hf-apdx})} from initial values \((x_{0}, y_{0})\) with step size \(\eta > 0\) generates the sequence \(\{(x_{n}, y_{n})\}_{n \geq 0}\) where \((x_{n}, y_{n})\) satisfies the recursion
\begin{equation}
\label{eq:implicit-integ-gen}
\begin{aligned}
   x_{n+1}-x_n &= \eta \cdot \nabla_{y}H(x_{n+1}, y_{n+1})~, \\
    y_{n+1}-y_n &= -\eta \cdot \nabla_{x}H(x_{n+1}, y_{n+1})~.
    \end{aligned}
\end{equation}
More succinctly, using \(z_{n} := (x_{n}, y_{n})\),
\begin{equation*}
    z_{n + 1} - z_{n} = \eta \cdot \Omega \nabla H(z_{n + 1})~.
\end{equation*}
\end{definition}

\begin{remark}\label{rem:impl-prox-expl}
The explicit integrator is implementable whenever $\nabla H$ is accessible. The implicit integrator, however, requires solving a nonlinear system. Consider the special case $H(x,y)=f(x)+\frac12\|y\|^2$, for which
$\nabla_y H(x,y)=y$ and $\nabla_x H(x,y)=\nabla f(x)$.
Then the implicit updates read:
\begin{equation*}
\begin{aligned}
x_{n+1}-x_n &= \eta y_{n+1},\\
y_{n+1}-y_n &= -\eta \nabla f(x_{n+1})
\end{aligned}
\end{equation*}
which is precisely \emph{\ref{eq:implicit-integ}}.
Eliminating $y_{n+1}$ in the update for \(x_{n + 1}\) in \emph{\ref{eq:implicit-integ}} yields the implicit equation
\[
x_{n+1} = x_n + \eta y_n - \eta^2 \nabla f(x_{n+1}),
\]
or equivalently,
\[
x_{n+1} = \arg\min_{x\in\bbR^d}
\left\{ f(x) + \frac{1}{2\eta^2}\|x-(x_n+\eta y_n)\|^2 \right\}
= \mathrm{Prox}_{\eta^2 f}(x_n+\eta y_n)
\]
where \(\mathrm{Prox}_{\eta^{2}f}\) is the proximal operator for \(x \mapsto \eta^{2} f(x)\) \citep{parikh2014proximal}.
The update for $y_{n+1}$ is the same as in \emph{\ref{eq:implicit-integ}}.
Thus, when a proximal oracle for $f$ is available, the implicit integrator
admits a simple implementation
\[
x_{n+1}=\mathrm{Prox}_{\eta^2 f}(x_n+\eta y_n),
\qquad
y_{n+1}=y_n-\eta \nabla f(x_{n+1}).
\]
The extragradient integrator is formed by replacing the call to the proximal oracle for \(f\) by a gradient descent step. See \Cref{defn:extg} for more detail.
\end{remark}

\paragraph{Monotonicity properties for convex Hamiltonian.}
A key property about the explicit (resp.\ implicit) integrator is that it produces iterates along which the Hamiltonian values are monotonically increasing (resp.\ decreasing) when the Hamiltonian \(H\) is a convex function in the phase space.
We formalize this in following lemmas.
We note the following results only use the property that the Hamiltonian flow is a skew-gradient flow (i.e., the velocity vector field is orthogonal to the gradient); see also the discussion in~\citet[Appendix~B]{wibisono2022alternating}.

\begin{lemma}
\label{lem:fe-increase-H}
Assume that the Hamiltonian \(H\) is convex.
Suppose \((x_{n + 1}, y_{n + 1})\) is produced by one step of the explicit integrator \emph{(\cref{eq:explicit-integ-gen})} from \((x_{n}, y_{n})\) with step size \(\eta > 0\).
Then,
\begin{equation*}
    H(x_{n + 1}, y_{n + 1}) \geq H(x_{n}, y_{n})~.
\end{equation*}
\end{lemma}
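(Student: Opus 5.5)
We prove the claim from the first-order characterization of convexity applied to \(H\) at the new point \(z_{n+1}\), combined with the skew-symmetry of \(\Omega\). Write \(z_{n} = (x_{n}, y_{n})\). By \Cref{defn:exp_imp_apdx}, \(z_{n+1} - z_{n} = \eta\,\Omega \nabla H(z_{n})\). Convexity of \(H\), with the linearization taken at \(z_{n+1}\) and evaluated at \(z_{n}\), gives
\begin{equation*}
    H(z_{n}) \geq H(z_{n+1}) + \langle \nabla H(z_{n+1}), z_{n} - z_{n+1}\rangle ,
\end{equation*}
that is,
\begin{equation*}
    H(z_{n+1}) - H(z_{n}) \leq \langle \nabla H(z_{n+1}), z_{n+1} - z_{n}\rangle .
\end{equation*}
This upper bound points the wrong way for our purpose. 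So instead we linearize at \(z_{n}\) and evaluate at \(z_{n+1}\):
\begin{equation*}
    H(z_{n+1}) \geq H(z_{n}) + \langle \nabla H(z_{n}), z_{n+1} - z_{n}\rangle = H(z_{n}) + \eta\,\nabla H(z_{n})^{\top}\Omega \nabla H(z_{n}).
\end{equation*}

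The final step is to observe that \(v^{\top}\Omega v = 0\) for every \(v \in \bbR^{2d}\), because \(\Omega\) is skew-symmetric (the same fact used in the proof of conservation of the Hamiltonian). The inner-product term therefore vanishes, and \(H(z_{n+1}) \geq H(z_{n})\) follows.

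No real obstacle arises. The only point requiring care is to take the tangent plane at the point where the explicit step evaluates the gradient, namely \(z_{n}\); the skew-gradient structure then makes the first-order term vanish exactly. Only differentiability and convexity of \(H\) are needed, with no smoothness assumption and no restriction on \(\eta > 0\). The mirrored argument, linearizing at \(z_{n+1}\) with \(z_{n+1} - z_{n} = \eta\,\Omega\nabla H(z_{n+1})\), gives the decrease for the implicit integrator.
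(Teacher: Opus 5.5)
Your proof is correct and is essentially identical to the paper's: you apply the first-order convexity inequality at \(z_{n}\), evaluate it at \(z_{n+1}\), and then use skew-symmetry (\(v^{\top}\Omega v = 0\)) to kill the linear term. The opening detour that linearizes at \(z_{n+1}\) is unnecessary but harmless.
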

\begin{proof}
We use the notation \(z_{n}\) and \(z_{n + 1}\) to denote the updates in phase space for convenience.
By convexity of \(H\) we have
\begin{align*}
    H(z_{n + 1}) &\geq H(z_{n}) + \langle \nabla H(z_{n}), z_{n + 1} - z_{n}\rangle \\
    &= H(z_{n}) + \eta \langle \nabla H(z_{n}), \Omega \nabla H(z_{n})\rangle \\
    &= H(z_{n})
\end{align*}
where the final step uses the fact that \(\langle v, \Omega v\rangle = 0\) since $\Omega$ is skew-symmetric.
\end{proof}

\begin{lemma}
\label{lem:hf-imp-nonincrease-ham}
Assume that the Hamiltonian \(H\) is convex.
Suppose \((x_{n + 1}, y_{n + 1})\) is produced by one step of the implicit integrator \emph{(\cref{eq:implicit-integ-gen})} from \((x_{n}, y_{n})\) with step size \(\eta > 0\).
Then,
\begin{equation*}
    H(x_{n + 1}, y_{n + 1}) \leq H(x_{n}, y_{n})~.
\end{equation*}
\end{lemma}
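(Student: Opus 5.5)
The proof mirrors the one for \Cref{lem:fe-increase-H}, but with convexity applied at the new point \(z_{n+1}\) instead of the old point \(z_{n}\). Write \(z_{n} = (x_{n}, y_{n})\). In this notation the implicit update reads \(z_{n+1} - z_{n} = \eta\, \Omega \nabla H(z_{n+1})\).

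First apply the first-order characterization of convexity of \(H\), with base point \(z_{n+1}\) and evaluated at \(z_{n}\):
\begin{equation*}
    H(z_{n}) \geq H(z_{n+1}) + \langle \nabla H(z_{n+1}), z_{n} - z_{n+1}\rangle~.
\end{equation*}
Next, substitute the implicit relation \(z_{n} - z_{n+1} = -\eta\, \Omega \nabla H(z_{n+1})\). This turns the inner product into \(-\eta \langle \nabla H(z_{n+1}), \Omega \nabla H(z_{n+1})\rangle\). Because \(\Omega\) is skew-symmetric, \(\langle v, \Omega v\rangle = 0\) for every \(v\), so the inner product vanishes. What remains is \(H(z_{n}) \geq H(z_{n+1})\), which is the claim.

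There is essentially no obstacle. The only point to state explicitly is that we take a solution \(z_{n+1}\) of the implicit equation as given, i.e., that the step is well defined. The lemma assumes \((x_{n+1}, y_{n+1})\) is produced by the integrator, so existence is part of the hypothesis and need not be proven. We also assume \(H\) is differentiable, so that the gradient inequality is available; this holds under the standing assumption that \(H\) is continuously differentiable.
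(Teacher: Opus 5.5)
Your proof is correct and is essentially the paper's argument: the paper likewise applies the gradient inequality for the convex \(H\) at the new point \(z_{n+1}\), substitutes \(z_{n+1}-z_n=\eta\,\Omega\nabla H(z_{n+1})\), and uses the skew-symmetry of \(\Omega\) to make the inner product vanish. Your remarks on the well-posedness of the implicit step and the differentiability of \(H\) are reasonable side notes that the paper leaves implicit.
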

\begin{proof}
We use the notation \(z_{n}\) and \(z_{n + 1}\) to denote the updates in phase space for convenience.
By convexity of \(H\) we have
\begin{align*}
    H(z_{n + 1}) &\leq H(z_{n}) + \langle \nabla H(z_{n + 1}), z_{n + 1} - z_{n}\rangle \\
    &= H(z_{n}) + \eta \langle \nabla H(z_{n + 1}), \Omega \nabla H(z_{n + 1})\rangle \\
    &= H(z_{n})
\end{align*}
where the final step uses the fact that \(\langle v, \Omega v\rangle = 0\) since $\Omega$ is skew-symmetric.
\end{proof}

\subsubsection{Leapfrog integrator}

We describe another type of integrator called the leapfrog integrator.
This differs from the design of the explicit and implicit integrator due to interleaved updates for \(y\) and \(x\) as described below.

\begin{definition}\label{def:leapfrog}
The \emph{leapfrog integrator} of the Hamiltonian flow \emph{(\ref{eq:hf-apdx})} from initial values \((x_{0}, y_{0})\) with step size \(\eta > 0\) generates the sequence \(\{(x_{n}, y_{n})\}_{n \geq 0}\) where \((x_{n}, y_{n})\) satisfies the recursion
\begin{align*}
y_{n+\frac12} &= y_n - \frac{\eta}{2}\,\nabla_x H(x_n,y_n) \\
x_{n+1} &= x_n + \eta\,\nabla_y H(x_n, y_{n+\frac12}) \\
y_{n+1} &= y_{n+\frac12} - \frac{\eta}{2}\,\nabla_x H(x_{n+1},y_{n+\frac12}).
\end{align*}
\end{definition}
\begin{remark}
    When $H$ is separable, i.e. of the form $H(x, y)=f(x)+g(y)$,  the updates for \(y\) in the leapfrog integrator can be combined, and the resulting update rule is known as the symplectic Euler integrator for the separable Hamiltonian, which performs the update for the $x$ and $y$ variable in an alternating way:
    \begin{equation*}
    x_{n+1} = x_n + \eta\nabla_y H(x_n, y_{n})~, \quad
    y_{n+1} = y_{n} - \eta\nabla_x H(x_{n+1},y_{n})~.
    \end{equation*}
\end{remark}

While the explicit and implicit Euler schemes result in monotonic sequences of Hamiltonian values (\Cref{lem:fe-increase-H,lem:hf-imp-nonincrease-ham}), the Hamiltonian $H(x_n, y_n)$ along the iterates of the leapfrog integrator does \textit{not} change monotonically.
Instead, it enjoys a higher-order \emph{one-step conservation error of Hamiltonian}: under mild smoothness assumptions,
\[
\bigl|H(x_{n+1},y_{n+1})-H(x_n,y_n)\bigr|=\mathcal{O}(\eta^3)\qquad(\eta\to 0),
\]
whereas the explicit/implicit Euler schemes have a one-step defect of order $\mathcal{O}(\eta^2)$.
We refer readers to Section~3.3 of \cite{Bou_Rabee_2018} for a detailed discussion.

Another important property of leapfrog integrator is the preservation of symplecticity of Hamiltonian flow (see \Cref{lem:HF-symplectic}); the symplecticity of the leapfrog integrator can be found in~\citet[Theorem~4.1]{Bou_Rabee_2018}. In the continuous-time analysis of the present work, however, symplecticity does not play a direct role, and therefore we did not use leapfrog integrator in our discretization analysis. That said, a number of previous works on Hamiltonian optimization, including \cite{betancourt2018symplecticoptimization, Tran_2023_symplectic, Duruisseaux_2023_symplectic, lin2024simplifyingmomentumbasedpositivedefinitesubmanifold, zhu2026frictionless}, have shown that symplectic integrators can substantially improve the stability of the discrete trajectory, which in turn may lead to better convergence behavior. In the context of Hamiltonian Monte Carlo, the use of symplectic integrators is even more fundamental, as it is closely tied to the preservation of the target probability distribution. We refer to \cite{Bou_Rabee_2018} for a thorough discussion.
\section{A Review of Hamiltonian Monte Carlo for Sampling}\label{app:HMC}

We review the Hamiltonian Monte Carlo (\textsf{HMC}) framework for sampling~\citep{duane1987hybrid, neal2011mcmc}. Given a function $f \colon \R^d \to \R$, the goal in sampling is to draw approximate samples from the target distribution $\nu(x) \propto e^{-f(x)}$ on $\R^d$. We denote the set of probability distributions on $\R^d$ by $\mathcal{P}(\R^d)$. Assuming that the Hamiltonian dynamics~\ref{eq:HamFlow} can be solved exactly, the exact \textsf{HMC} algorithm to sample from $\nu$ is defined in \nameref{alg:eHMC} (\Cref{alg:eHMC}).

\begin{algorithm}[h]
\DontPrintSemicolon
\caption{Exact Hamiltonian Monte Carlo (\textsf{eHMC})}
\algotitle{\textsf{eHMC}}{alg:eHMC}
\SetKwInOut{Input}{Input}\SetKwInOut{Output}{Output}
\SetAlgoLined
\Input{Initial distribution $\mu \in \mathcal{P}(\R^d)$, Integration time $T > 0$, Number of iterations $K \in \mathbb{N}$}
Sample $X_0 \sim \mu$

\For{\(k = 1\) \KwTo \(K\)}{
    Sample $\xi \sim \mathcal{N}(0,I)$

    Define $X_k \coloneqq \Pi_1 \circ \HF((X_{k-1}, \xi); T)$
    }
\Return $X_K$
\end{algorithm}

The difference between \nameref{alg:eHMC} for sampling and~\ref{eq:HF-opt-update} for optimization is that for sampling, the velocity is periodically refreshed to be an independent draw from a standard Gaussian $\xi \sim \mathcal{N}(0, \mathrm{I}_d)$, instead of deterministically set to $\bz \in \R^d$ for optimization.

Since \ref{eq:HamFlow} can be implemented exactly only in special cases, a discretization scheme of the Hamiltonian dynamics has to be utilized, leading to the unadjusted HMC algorithm~\citep{bou2023mixing, bou2026tail}; several discretization schemes of the Hamiltonian dynamics are discussed in~\Cref{app:HF_Background} with a popular method being the St\"ormer--Verlet or leapfrog integrator.

Define $\pi = \nu \otimes \mathcal{N}(0,I) \in \mathcal{P}(\R^{2d})$. Therefore, $\pi$ is a distribution defined on the phase space and note that 
\[
\pi(x,y) \propto e^{-H(x,y)}
\]
where the Hamiltonian energy function $H: \R^{2d} \to \R$ is defined in Section~\ref{sec:ham-dynamics-def}.
To see that $\nu$ is in fact the stationary distribution for \nameref{alg:eHMC}, we will show that the Hamiltonian flow~\eqref{eq:HamFlow} preserves $\pi$, i.e., for any $t \geq 0$
\[
\HF(\,\cdot\,;\, t)_\# \pi = \pi\,.
\]
Let $\tilde \pi = \HF(\,\cdot\,;\, t)_\# \pi$. Then, by the change of variable formula, we have that for any $(x,y) \in \R^{2d}$,
\[
\pi(x,y) = \tilde \pi (\HF((x,y); t))\, |\det (\nabla \HF ((x,y); t))| = \tilde \pi (\HF((x,y); t))
\]
where the last equality is because the Hamiltonian flow preserves volume; see~\Cref{app:HF_Background}. Thus,
\[
\tilde \pi (\HF((x,y); t)) = \pi(x,y) = Z^{-1} e^{-H(x,y)} = Z^{-1} e^{-H(\HF((x,y); t))}
\]
where $Z$ is a normalizing constant and the final equality is due to conservation of the Hamiltonian along the Hamiltonian flow (see~\Cref{app:HF_Background}). Hence $\tilde \pi = \pi$, and therefore, the position marginal $\nu$ is stationary for \nameref{alg:eHMC}, when the velocity marginal is $\mathcal{N}(0, \mathrm{I}_d)$ and is independent with $\nu$.

Further properties of \nameref{alg:eHMC} can be found in~\citet[Section~A]{bou2026tail} and a discussion on how \nameref{alg:eHMC} and~\ref{eq:HF-opt-update} fit into a general Lift-Conserve-Project (LCP) scheme can be found in~\citet[Section~B]{fu2025hamiltoniandescentalgorithmsoptimization}. For a discussion on prior works studying \nameref{alg:eHMC} and unadjusted HMC, see e.g.,~\citet[Section~1.3]{bou2026tail}.
\section{Lower Bound for \ref{eq:HF-opt-update}}
\label{app:LowerBound}

\begin{lemma}\label{lem:LowerBound}
    Fix $c > 0$. For every $\kappa \geq \max \{1, c^2\}$, there exists an $\alpha$-strongly convex and $L = \kappa \alpha$-smooth quadratic function
    \[
    f(x) = \frac{\alpha}{2}x_1^2 + \frac{L}{2}x_2^2\,,\qquad x = (x_1, x_2)^\top\,,
    \]
    such that the algorithmic update~\eqref{eq:HF-opt-update} with integration time $T = \frac{c}{\sqrt{L}}$, initialized at $x_0 = (x_{0,1},0)$, requires $k = \Omega(\kappa \log (1/\varepsilon))$ many iterations to ensure that $f(x_k)-f(x^\star) \leq \varepsilon$\,. The total integration time $T \times k$ is therefore $\Omega(\frac{c\sqrt{L}}{\alpha}\log(1/\varepsilon))$.
\end{lemma}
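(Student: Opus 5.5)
The plan is to compute the \ref{eq:HF-opt-update} map exactly, since for a separable quadratic the Hamiltonian flow decouples coordinatewise. For $f(x) = \frac{\alpha}{2}x_1^2 + \frac{L}{2}x_2^2$, the dynamics \ref{eq:HamFlow} in coordinate $i$ with curvature $\lambda_i \in \{\alpha, L\}$ are $\ddot X_{t,i} = -\lambda_i X_{t,i}$. From $(X_0, Y_0) = (x, \bm{0})$ this gives $X_{t,i} = \cos(\sqrt{\lambda_i}\, t)\, x_i$. One step of \ref{eq:HF-opt-update} with $T = c/\sqrt{L}$ is therefore the linear map $x \mapsto (\cos(c/\sqrt{\kappa})\, x_1, \cos(c)\, x_2)$. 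Starting from $x_0 = (x_{0,1}, 0)$, the second coordinate stays zero forever, and
\[
x_{k} = \bigl(\cos(c/\sqrt{\kappa})^{k} x_{0,1}, 0\bigr), \qquad f(x_k) - f(x^\star) = \cos^{2k}(c/\sqrt{\kappa})\,\bigl(f(x_0) - f(x^\star)\bigr),
\]
with $x^\star = 0$.

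Next I lower bound the contraction factor. Set $\theta = c/\sqrt{\kappa}$. The hypothesis $\kappa \ge c^2$ gives $\theta \in (0, 1]$, so $\cos\theta \ge 1 - \theta^2/2 \ge 1/2 > 0$. Using $\log(1-u) \ge -2u$ for $u \in [0, 1/2]$, I get
\[
\log \cos^2\theta \;\ge\; 2\log(1 - \theta^2/2) \;\ge\; -2\theta^2 \;=\; -\frac{2c^2}{\kappa}.
\]
Hence $f(x_k) - f(x^\star) \ge e^{-2c^2 k/\kappa}\,(f(x_0) - f(x^\star))$.

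Demanding that this be at most $\varepsilon$ forces
\[
k \;\ge\; \frac{\kappa}{2c^2}\,\log\frac{f(x_0) - f(x^\star)}{\varepsilon}.
\]
For a fixed constant $c$ this is $k = \Omega(\kappa \log(1/\varepsilon))$. To make the dependence on the data explicit, I would choose $x_{0,1}$ with $f(x_0) - f(x^\star) = 1$, or simply treat the initial gap as a constant. Multiplying by $T = c/\sqrt{L}$ and using $\kappa = L/\alpha$ gives total time
\[
T k \;\ge\; \frac{\kappa}{2c\sqrt{L}}\log\frac{1}{\varepsilon} \;=\; \frac{\sqrt{L}}{2c\,\alpha}\log\frac{1}{\varepsilon}.
\]

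This bound scales as $\frac{\sqrt{L}}{c\alpha}$, not $\frac{c\sqrt{L}}{\alpha}$ as written in the statement. For fixed $c$ the two agree up to constants, so the main claim $\Omega(\frac{\sqrt{L}}{\alpha}\log\frac{1}{\varepsilon})$ holds, and I would present the $c$-dependence as an absorbed constant. I do not expect any step to be a real obstacle. The only delicate points are the sign and size of the cosine, which are handled by $\theta \le 1$, and keeping track of this $c$-dependence.
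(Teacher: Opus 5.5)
Your proposal is correct and follows the paper's proof step for step. You solve the decoupled flow exactly to get $x_{k,1}=\cos^k(c/\sqrt{\kappa})\,x_{0,1}$ with $x_{k,2}\equiv 0$, then lower-bound the cosine using $c/\sqrt{\kappa}\le 1$. The paper stops at $(1-\frac{c^2}{2\kappa})^{2k}$, and your $\log(1-u)\ge -2u$ step supplies the final deduction it leaves implicit. Your remark on the $c$-dependence is also right: the argument gives total time of order $\frac{\sqrt{L}}{c\,\alpha}\log(1/\varepsilon)$, so the explicit factor $c$ in the statement's $\frac{c\sqrt{L}}{\alpha}$ is valid only with a hidden constant depending on the fixed $c$.
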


\begin{proof}
As $f$ is separable, the dynamics~\eqref{eq:HamFlow} decouple along both coordinates and are given by
\[
\ddot{X}_t = -\alpha\, X_t\,, \qquad \ddot{Y}_t = -L\,Y_t\,.
\]
As the initial velocity in Step~(ii)~of~\ref{eq:HF-opt-update} is $\bz \in \R^2$, we have that $X'_0 = Y'_0 = 0$.
Note that in the proof of this lemma, $X,Y \in \R$ and denote the coordinates of the position space $\R^2$. 

For any $k \in \mathbb{N}$, denote the $k$th iterate of the algorithm~\eqref{eq:HF-opt-update} by $x_k = (x_{k,1}, x_{k,2}) \in \R^2$.
Given integration time $T > 0$, the update is therefore
\[
x_{k+1, 1} = x_{k,1} \cos(\sqrt{\alpha}\,T), \qquad x_{k+1, 2} = x_{k,2} \cos (\sqrt{L}\, T)\,,
\]
which can be rewritten as
\[
x_{k+1, 1} = x_{k,1} \cos\left(\frac{c}{\sqrt{\kappa}}\right), \qquad x_{k+1, 2} = x_{k,2} \cos (c)\,.
\]
Considering the initialization $x_0 = (a,0)$, we get that $x_{k,2} \equiv 0$ for all $k \geq 0$ and that
\[
x_{k,1} = x_{0,1} \cdot \cos^k\left(\frac{c}{\sqrt{\kappa}}\right)~.
\]
Therefore,
\[
f(x_k) - f(x^\star) = \frac{\alpha}{2} x_{k,1}^2 = \frac{\alpha}{2} x_{0,1}^2 \cos^{2k} \left(\frac{c}{\sqrt{\kappa}}\right) = \cos^{2k} \left(\frac{c}{\sqrt{\kappa}}\right) (f(x_0) - f(x^\star))\,.
\]
As $\kappa \geq c^2$, we have that $0 \leq \frac{c}{\sqrt{\kappa}} \leq 1$. Therefore, using 
\[
\cos(u) \geq 1-\frac{u^2}{2}
\]
for $0\leq u \leq1$ implies
\[
f(x_k)- f(x^\star) \geq \left(1-\frac{c^2}{2\kappa} \right)^{2k} (f(x_0) - f(x^\star))\,,
\]
which implies the claimed iteration complexity.
\end{proof}
\section{Extended discussion of \texorpdfstring{\Cref{sec:hf_convexity}}{Section 3}}

\subsection{Examples}

\subsubsection{A Quadratic Setting}
\label{app:sec:example-hf-quad}

As an example, we consider a quadratic objective function and illustrate how the final endpoint of Hamiltonian flow trajectories compares against averaged trajectories.
We take the objective function to be \(f(x) = \frac{1}{2}x^{\top}\Lambda x\) where \(\Lambda\) is the \(2 \times 2\) diagonal matrix $\Lambda = \begin{pmatrix}
    2 & 0 \\
    0 & 10
\end{pmatrix}$.
In this setting, the solution to the Hamiltonian dynamics is available in closed form.
We set the initial position to be \(X_{0} = [1, 1]^{\top}\), integration time to be \(T = 4\), and plot the three trajectories:
\begin{itemize}[itemsep=0pt, leftmargin=*]
    \item the position \((X_{t})_{t \in [0, T]}\)\,, 
    \item the aggregated position \((X^{\mathrm{avg}}(X_{0}; t))_{t \in [0, T]}\) as defined in \cref{eq:WeightedAvg}\,,
    \item the aggregated position
    \((X^{\mathrm{s-avg}}(X_{0}; t))_{t \in [0, T]}\) according to \(X^{\mathrm{s-avg}}(X_{0}; t) = \frac{1}{t}\int_{0}^{t} X_{s}\,\rmd s\)\,.
\end{itemize}

\begin{figure}[H]
\centering
\includegraphics[width=1.0\linewidth]{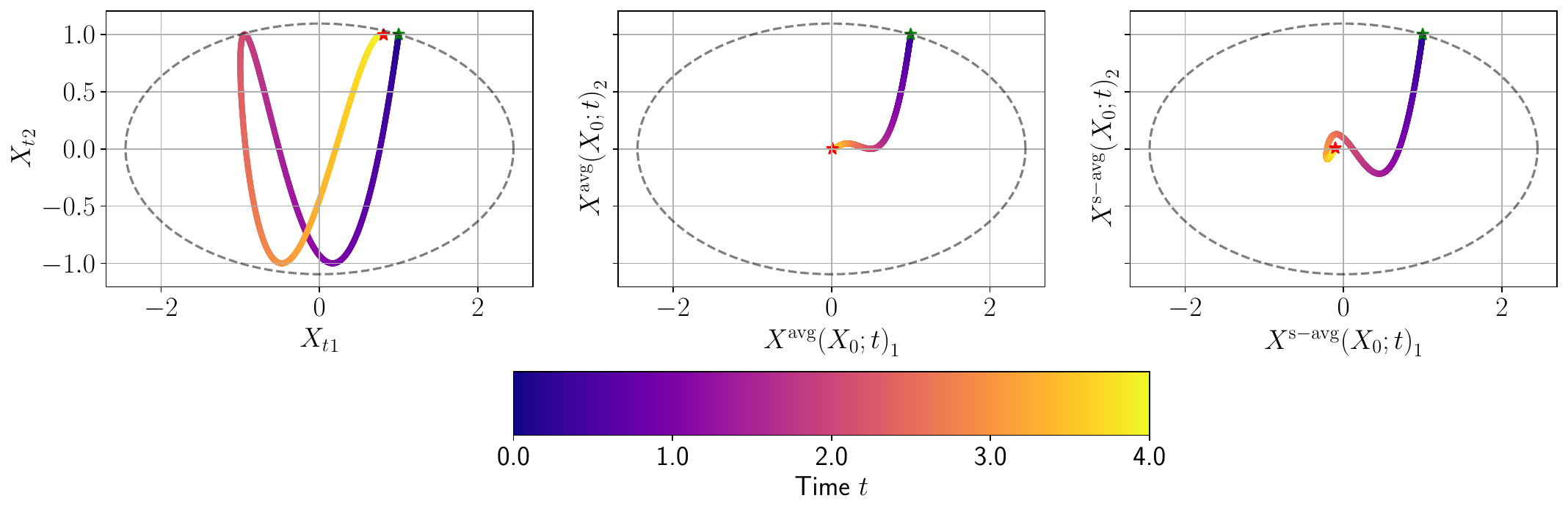}
\caption{The dotted lines represent the level set of \(f\), and green and red stars highlight the position at time \(t = 0\) and \(t = T\) respectively.}
\label{fig:agg-variation}
\end{figure}

The minimizer of \(f\) is at \([0, 0]^{\top}\) and \cref{fig:agg-variation} shows that the position \(X_{t}\) exhibits oscillatory behavior whereas the averages \(X^{\mathrm{s-avg}}(X_{0}; t)\) and \(X^{\mathrm{avg}}(X_{0}; t)\) approach the minimizer.
In \cref{fig:func-variation} we plot the function values corresponding to the positions plotted in \cref{fig:agg-variation}, and also plot the following function values:
\begin{itemize}[itemsep=0pt, leftmargin=*]
    \item \(f^{\mathrm{avg}}(X_{0}; t) := \frac{2}{t^{2}}\int_{0}^{t} (t - s) f(X_{s}) \rmd s\)\,,
    \item \(f^{\mathrm{s-avg}}(X_{0}; t) := \frac{1}{t}\int_{0}^{t} f(X_{s}) \rmd s\)\,.
\end{itemize}

\begin{minipage}{0.55\linewidth}
\begin{figure}[H]
  \centering
    \includegraphics[width=\textwidth]{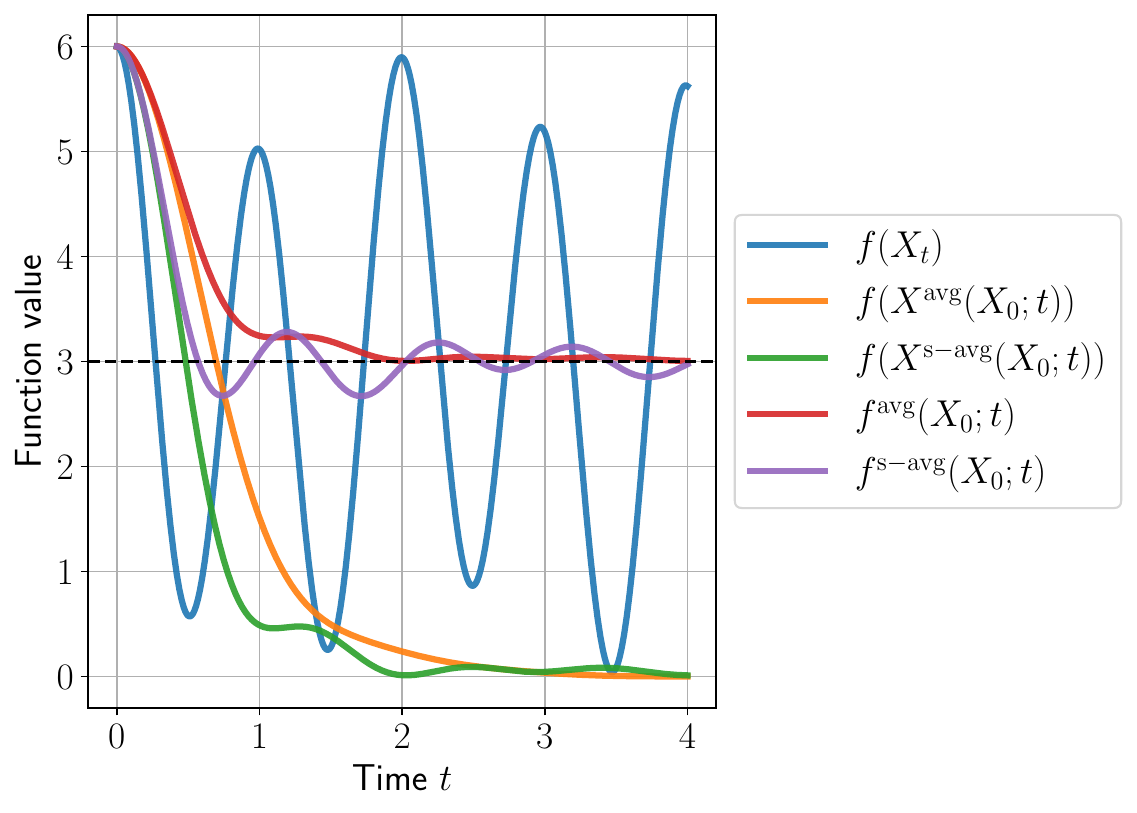}
  \caption{Evolution of function values with time \(t\).}
  \label{fig:func-variation}
\end{figure}
\end{minipage}
\hfill
\begin{minipage}{0.43\linewidth}
We plot \(f^{\mathrm{avg}}\) and \(f^{\mathrm{s-avg}}\) because the former is the quantity that we bound; see \Cref{rmk:stronger}, and the latter is the function value analogue of \(X^{\mathrm{s-avg}}\).
We note two key observations from this plot: (a) the function values along the aggregated position decay to \(0\), and (b) \(f^{\mathrm{avg}}(X_{0}; t)\), \(f^{\mathrm{s-avg}}(X_{0}; t)\) oscillates about \(\frac{1}{2}f(X_{0})\) with varying amplitudes.
The first observation speaks to the nature of oscillatory behaviour of \(X_{t}\) shown in \cref{fig:agg-variation}, and the second observation suggests our analysis might be able to be tightened, but it would not qualitatively affect the convergence rates.
\end{minipage}

\subsubsection{A Strictly Convex Setting}

In this example, we consider a 1 dimensional function \(f(x) = e^{x} - x - 1\).
This is a strictly convex function on account of \(\frac{\rmd^{2}}{\rmd x^{2}} f(x) = e^{x} > 0\).
This instructive example is meant to demonstrate that (a) the time averages of the position does not necessarily converge to the minimizer which is at \(0\) with \(f^{\star} = 0\), and (b) the optimality gap of the function averages \(f^{\mathrm{avg}}(X_{0}; t)\) and \(f^{\mathrm{s-avg}}(X_{0}; t)\) do not necessarily oscillate about \(\frac{1}{2}(f(X_{0}) - f^{\star})\).

Here, we do not have access to a closed form solution for \ref{eq:HamFlow}, and instead use a numerical integrator -- specifically the DOP853 integrator of Dormand and Prince as implemented in the \texttt{dop853} integrator in \texttt{SciPy} \citep{virtanen2020scipy}.

\begin{minipage}{0.4\linewidth}
We initialize \(X_{0} = 1\), and set \(T= 20\).
We plot same three trajectories as before, but the aggregated position are approximated due to the unavailability of closed form solutions.
Specifically, for \(\eta = 10^{-3}\) and \(k \in \{0, \ldots, \nicefrac{T}{\eta}\}\)
\begin{align*}
    X^{\mathrm{avg}}(X_{0}; k\eta) &\approx \frac{2}{k(k + 1)}\sum_{i=0}^{k}\sum_{j=0}^{i}X_{j\eta}~, \\
    X^{\mathrm{s-avg}}(X_{0}; k\eta) &\approx \frac{1}{k + 1}\sum_{i=0}^{k}X_{i\eta}~.
\end{align*}
\end{minipage}
\hfill
\begin{minipage}{0.55\linewidth}
\begin{figure}[H]
\centering
\includegraphics[width=\linewidth]{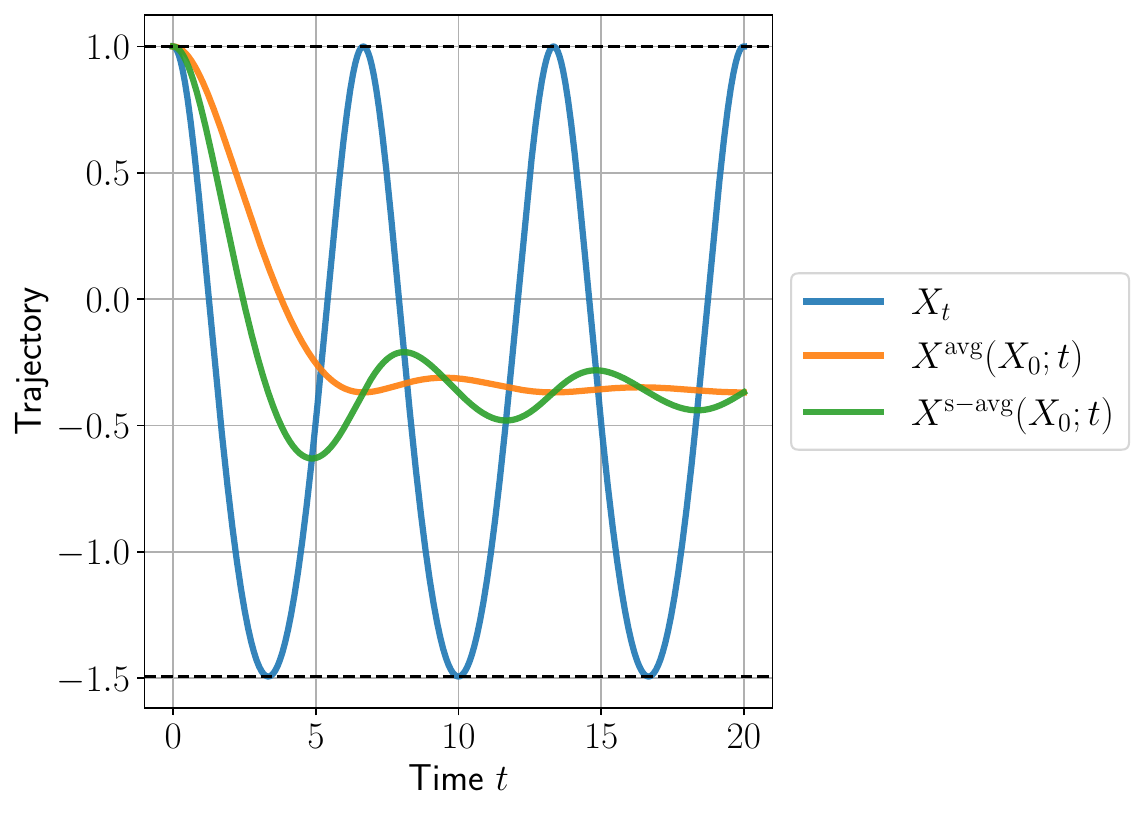}
\caption{The dotted lines are the boundaries of the sublevel set of \(f\) corresponding to \(f(x) \leq f(X_{0})\).}
\label{fig:agg-variation-2}
\end{figure}
\end{minipage}

In \Cref{fig:agg-variation-2}, we observe a similar oscillatory behavior for \(X_{t}\), while the averaged positions oscillate less so.
Notably, these do not oscillate close to \(0\), which we recall is the minimizer of this function.
We also plot the function values associated with these trajectories, and \(f^{\mathrm{avg}}\) and \(f^{\mathrm{s-avg}}\) as described in the previous subsection -- see \Cref{fig:func-variation-2}.
Again, due to the unavailability of closed form solution, we approximate the integrals in \(f^{\mathrm{avg}}\) and \(f^{\mathrm{s-avg}}\) as a finite-sum in the same manner as done for the approximations of the aggregated positions.

\begin{minipage}{0.55\linewidth}
\begin{figure}[H]
  \centering
    \includegraphics[width=\linewidth]{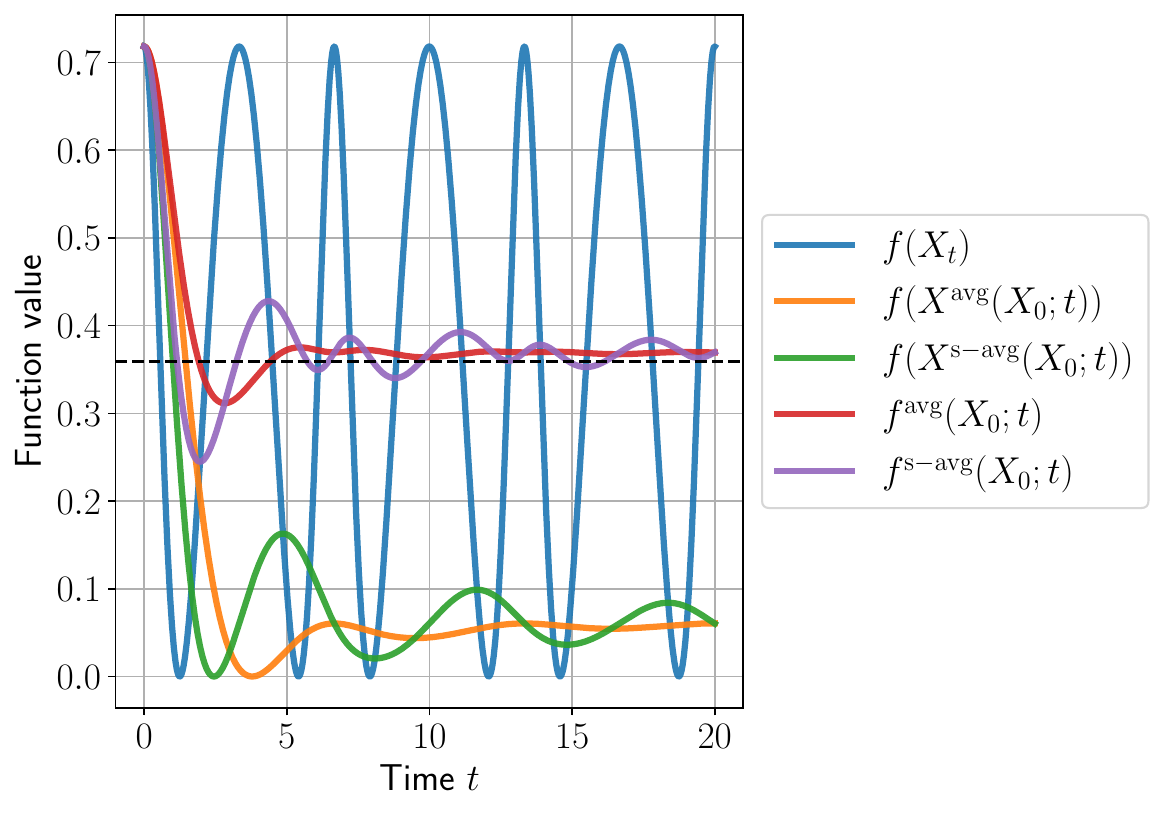}
  \caption{Evolution of function values with time \(t\).}
  \label{fig:func-variation-2}
\end{figure}
\end{minipage}\hfill
\begin{minipage}{0.4\linewidth}
Here, the dotted line represents \(\frac{1}{2}f(X_{0})\).
We see that the time averages of the function values \(f^{\mathrm{avg}}(X_{0}; t)\) and \(f^{\mathrm{s-avg}}(X_{0}; t)\) oscillates about a value that is slightly higher than this line with varying amplitudes.
This does not invalidate the result of \Cref{cor:hf-convex-main} which suggests the level \(\frac{2}{3}f(X_{0}) \approx 0.48\).
Consistent with the variation shows in \Cref{fig:agg-variation-2}, we see that the function values of the aggregated positions do not converge to \(0\), which is in contrast to what was observed in the quadratic setting (\Cref{fig:func-variation}).
\end{minipage}

\subsection{Proofs of main lemmas in \texorpdfstring{\Cref{sec:hf_convexity}}{Section 3}}

\subsubsection{Proof of \texorpdfstring{\Cref{cor:hf-convex-main}}{Corollary 1}}
\label{prf:cor:hf-convex-main}

To prove~\Cref{cor:hf-convex-main}, we first state and prove the following more general lemma.

\begin{lemma}
\label{lem:hf-convex-main-gen}
Let \(f\) satisfy \emph{\convexassump{}}.
Consider a non-negative weighting function \(\bm{w}(t)\) over \([0, T]\) such that \(\int_{0}^{T} \bm{w}(t) \rmd t = 1\), and define the \(\bm{w}\)-weighted average of \((\HF((X_{0}, \bm{0}); \tau))_{\tau \in [0, T]}\) as
\begin{equation*}
    X^{\bm{w}}(X_{0}; T) := \int_{0}^{T} \bm{w}(t) X_{t}\rmd t~.
\end{equation*}
Then,
\begin{align*}
    f(X^{\bm{w}}(X_{0}; T)) - f(z) &\leq \int_{0}^{T} \bm{w}(t)(f(X_{t}) - f(z)) \rmd t \\
    &\leq \frac{2}{3}(f(X_{0}) - f(z)) - \frac{1}{3}\bm{w}(T)\langle X_{T} - z, Y_{T}\rangle + \frac{1}{3}\int_{0}^{T} \dot{\bm{w}}(t)\langle X_{t} - z, Y_{t}\rangle\rmd t~.
\end{align*}
\end{lemma}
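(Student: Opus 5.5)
The first inequality is Jensen's inequality. Since \(\bm{w}\) is a probability density on \([0,T]\) and \(f\) is convex by \convexassump{}, we have \(f\bigl(\int_0^T \bm{w}(t)X_t\,\rmd t\bigr) \le \int_0^T \bm{w}(t) f(X_t)\,\rmd t\). Subtracting \(f(z) = \int_0^T \bm{w}(t) f(z)\,\rmd t\) from both sides gives the claim.

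For the second inequality, I will integrate the pointwise bound from \Cref{lem:hf-convex-main} against \(\bm{w}\). Let \(h(t) = \langle X_t - z, Y_t\rangle\), so that \(\frac{1}{2}\frac{\rmd^2}{\rmd t^2}\|X_t - z\|^2 = \dot h(t)\). Then \Cref{lem:hf-convex-main} reads
\begin{equation*}
f(X_t) - f(z) \le \tfrac{2}{3}(f(X_0) - f(z)) - \tfrac{1}{3}\dot h(t) \qquad \text{for every } t \in [0,T].
\end{equation*}
Multiplying by \(\bm{w}(t) \ge 0\) preserves the inequality. Integrating over \([0,T]\) and using \(\int_0^T \bm{w}(t)\,\rmd t = 1\) gives
\begin{equation*}
\int_0^T \bm{w}(t)(f(X_t) - f(z))\,\rmd t \le \tfrac{2}{3}(f(X_0) - f(z)) - \tfrac{1}{3}\int_0^T \bm{w}(t)\dot h(t)\,\rmd t.
\end{equation*}

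The remaining step is integration by parts:
\begin{equation*}
\int_0^T \bm{w}(t)\dot h(t)\,\rmd t = \bm{w}(T)h(T) - \bm{w}(0)h(0) - \int_0^T \dot{\bm{w}}(t)h(t)\,\rmd t.
\end{equation*}
The boundary term at \(t=0\) vanishes because the flow starts at rest, so \(h(0) = \langle X_0 - z, \bm{0}\rangle = 0\). Substituting, the contribution becomes
\begin{equation*}
-\tfrac{1}{3}\bm{w}(T)\langle X_T - z, Y_T\rangle + \tfrac{1}{3}\int_0^T \dot{\bm{w}}(t)\langle X_t - z, Y_t\rangle\,\rmd t,
\end{equation*}
which is exactly the stated bound.

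The only delicate point is regularity. The integration by parts needs \(\bm{w}\) to be absolutely continuous (implicitly assumed, since \(\dot{\bm{w}}\) appears in the statement) and \(h\) to be \(C^1\). The latter holds because \(X_t\) is \(C^1\) and \(Y_t\) is \(C^1\) whenever \(\nabla f\) is continuous; this is the standing assumption that the flow exists and is unique.

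As a sanity check, the weight \(\bm{w}(t) = \frac{2}{T^2}(T-t)\) has \(\bm{w}(T) = 0\) and \(\dot{\bm{w}} = -\frac{2}{T^2}\). The right-hand side then becomes \(-\frac{2}{3T^2}\int_0^T h\,\rmd t = -\frac{1}{3T^2}\bigl(\|X_T - z\|^2 - \|X_0 - z\|^2\bigr)\), which recovers \Cref{cor:hf-convex-main}.
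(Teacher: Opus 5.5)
Your proposal is correct and matches the paper's proof essentially step for step: Jensen's inequality for the first bound, then multiplying the pointwise inequality of \Cref{lem:hf-convex-main} by \(\bm{w}(t) \ge 0\), integrating, and integrating by parts using \(h(0) = 0\). Your remark on regularity (absolute continuity of \(\bm{w}\), and continuity of \(\nabla f\) so that \(h\) is \(C^1\)) states explicitly an assumption the paper leaves implicit.
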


\begin{proof}
Recall that $h(t) = \langle X_t - z, Y_t\rangle$ and $h(0) = 0$. Multiplying both sides of the result of \Cref{lem:hf-convex-main} by \(\bm{w}(t)\) which is non-negative, we obtain
\begin{equation*}
    \bm{w}(t) (f(X_{t}) - f(z)) \leq \frac{2}{3} \bm{w}(t) (f(X_{0}) - f(z)) - \frac{1}{3}\bm{w}(t) \dot{h}(t)~.
\end{equation*}
Integrating both sides from \(0\) to \(T\), we get the following:

\textbf{LHS:}~ By convexity of $f$,
\begin{equation*}
    \int_{0}^{T} \bm{w}(t) (f(X_{t}) - f(z)) \, \rmd t \geq f\left(\int_{0}^{T}\bm{w}(t)X_{t} \, \rmd t\right) - f(z) = f(X^{\bm{w}}(X_{0}; T)) - f(z)~.
\end{equation*}
The inequality is due to Jensen's inequality as \(\int_{0}^{T}\bm{w}(t)\rmd t= 1\).

\textbf{RHS:}~~
Using integration-by-parts,
\begin{equation*}
    -\frac{1}{3}\int_{0}^{T}\bm{w}(t) \dot{h}(t) \rmd t = -\frac{1}{3}\left\{w(T)h(T) - w(0)h(0)\right\} + \frac{1}{3}\int_{0}^{T}\dot{\bm{w}}(t)h(t) \rmd t~.
\end{equation*}
This completes the proof.
\end{proof}

We now provide the proof of~\Cref{cor:hf-convex-main}.

\medskip

\begin{proofof}{\Cref{cor:hf-convex-main}}~~
As defined in \cref{eq:WeightedAvg}, \(X^{\mathrm{avg}}(X_{0}; T)\) is the \(\bm{w}\)-weighted average with weight \(\bm{w}(t) = \frac{2}{T^{2}}(T- t)\), so $\dot{\bm{w}}(t) = -\frac{2}{T^2}$ and $\bm{w}(T) = 0$.
Instantiating \Cref{lem:hf-convex-main-gen} with this weight gives
\begin{equation*}
    f(X^{\mathrm{avg}}(X_{0}; T)) - f(z) \leq \frac{2}{3}(f(X_{0}) - f(z)) - \frac{2}{3T^{2}}\int_{0}^{T} \langle X_{t} - z, Y_{t}\rangle \rmd t~.
\end{equation*}
Since \(\frac{\rmd}{\rmd t} \|X_{t} - z\|^{2} = 2\langle X_{t} - z, Y_{t}\rangle\), we can simplify the above to
\begin{equation*}
    f(X^{\mathrm{avg}}(X_{0}; T)) - f(z) \leq \frac{2}{3}(f(X_{0}) - f(z)) - \frac{1}{3T^{2}}\|X_{T} - z\|^{2} + \frac{1}{3T^{2}}\|X_{0} - z\|^{2}~
\end{equation*}
which proves the corollary.
\end{proofof}

\subsubsection{Versions of \texorpdfstring{\Cref{lem:hf-convex-main}}{Lemma 2} for the average and minimizer over the trajectory}
\label{app:hf-convex-main-other-weights}
\begin{corollary}
\label{cor:hf-convex-main-uniform-avg}
Consider the setting of \Cref{lem:hf-convex-main}.
Let \(X^{\mathrm{s-avg}}(X_{0}; T)\) be the simple average of \(X_{t}\) along the Hamiltonian flow trajectory \((\HF((X_{0}, \bm{0}); \tau))_{\tau \in [0, T]}\). 
Then,
\begin{equation*}
    f(X^{\mathrm{s-avg}}(X_{0}; T)) - f(x^{\star}) \leq \frac{2}{3}(f(X_{0}) - f(x^{\star})) + \frac{1}{3T} \|X_{T} - x^{\star}\| \cdot \sqrt{2(f(X_{0}) - f(X_{T}))}~.
\end{equation*}
Moreover, when \(f\) satisfies \emph{\quadgrowassump{}},
\begin{equation*}
    f(X^{\mathrm{s-avg}}(X_{0}; T)) - f(\xstar) \leq \left(\frac{2}{3} + \frac{2}{3T\sqrt{\alpha}}\right)(f(X_{0}) - f(\xstar))~.
\end{equation*}
\end{corollary}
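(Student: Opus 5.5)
We apply \Cref{lem:hf-convex-main-gen} with the uniform weight \(\bm{w}(t) = \frac{1}{T}\) on \([0,T]\) and reference point \(z = \xstar\). Then \(\dot{\bm{w}} \equiv 0\), so the integral term vanishes, and \(\bm{w}(T) = \frac{1}{T}\). Together with Jensen's inequality this gives
\begin{equation*}
    f(X^{\mathrm{s-avg}}(X_{0}; T)) - f(\xstar) \leq \frac{2}{3}(f(X_{0}) - f(\xstar)) - \frac{1}{3T}\langle X_{T} - \xstar, Y_{T}\rangle~.
\end{equation*}
Unlike the weighted average, the boundary term no longer telescopes into a difference of squared distances. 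We therefore bound it directly. By Cauchy--Schwarz, \(-\langle X_{T} - \xstar, Y_{T}\rangle \leq \|X_{T} - \xstar\|\,\|Y_{T}\|\). Since \(Y_0 = \bm{0}\), energy conservation (\Cref{lem:hamflow-conserve}) gives \(\frac12\|Y_T\|^2 = f(X_0) - f(X_T)\), that is, \(\|Y_{T}\| = \sqrt{2(f(X_{0}) - f(X_{T}))}\). Substituting these yields the first inequality.

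For the second claim, we bound both factors using \quadgrowassump{} and the descent property \(f(X_T) \le f(X_0)\), which follows from energy conservation. Quadratic growth gives
\begin{equation*}
\|X_{T} - \xstar\| \leq \sqrt{\tfrac{2}{\alpha}(f(X_{T}) - f(\xstar))}~.
\end{equation*}
The product of the two square roots is then at most
\begin{equation*}
\frac{2}{\sqrt{\alpha}}\sqrt{(f(X_T)-f(\xstar))(f(X_0)-f(X_T))}~.
\end{equation*}
Writing \(a = f(X_T)-f(\xstar)\) and \(b = f(X_0)-f(X_T)\), both nonnegative with \(a+b = f(X_0)-f(\xstar)\), AM--GM gives \(\sqrt{ab} \le \frac{a+b}{2}\). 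So the product is at most \(\frac{1}{\sqrt{\alpha}}(f(X_0)-f(\xstar))\). This would yield the sharper constant \(\frac{1}{3T\sqrt\alpha}\). The cruder bound \(\sqrt{ab}\le a+b\) (each factor bounded by \(f(X_0)-f(\xstar)\)) gives exactly the stated \(\frac{2}{3T\sqrt{\alpha}}\), so either route suffices.

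The only step requiring care is the first one. We must check that \Cref{lem:hf-convex-main-gen} applies with a constant weight, where the boundary term at \(T\) does not vanish. We must also check the sign: the term enters as \(-\frac13\bm{w}(T)h(T)\), which is why Cauchy--Schwarz is applied to \(-\langle X_T-\xstar, Y_T\rangle\). Everything else is routine.
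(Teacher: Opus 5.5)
Your proof is correct and matches the paper's argument: you instantiate \Cref{lem:hf-convex-main-gen} with the constant weight $\bm{w}(t) = 1/T$ and $z = \xstar$, then bound the boundary term $-\frac{1}{3T}\langle X_T - \xstar, Y_T\rangle$ by Cauchy--Schwarz and energy conservation, and under quadratic growth bound each square-root factor by $f(X_0) - f(\xstar)$. Your AM--GM remark is also valid and sharpens the result beyond what the paper states, improving the factor to $\frac{2}{3} + \frac{1}{3T\sqrt{\alpha}}$.
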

\begin{proof}
    We instantiate \Cref{lem:hf-convex-main-gen} with \(\bm{w}(t) = \frac{1}{T}\).
    This gives
    \begin{equation*}
        f(X^{\mathrm{s-avg}}(X_{0}; T)) - f(z) \leq \frac{2}{3}(f(X_{0}) - f(z)) - \frac{1}{3T}\langle X_{T} - z, Y_{T}\rangle~.
    \end{equation*}
    The inner product can be bounded from above using Cauchy-Schwarz inequality as
    \begin{equation*}
        -\langle X_{T} - z, Y_{T}\rangle \leq \|X_{T} - Z\| \cdot \|Y_{T}\| \leq \|X_{T} - z\| \cdot \sqrt{2(f(X_{0}) - f(X_{T}))}~.
    \end{equation*}
    where the last inequality uses the energy conservation along the Hamiltonian flow trajectory to bound \(\|Y_{T}\|\).

    When \(f\) satisfies \quadgrowassump{}, using the energy conservation along the Hamiltonian flow trajectory, we have
    \begin{equation*}
        \|X_{T} - \xstar\| \leq \sqrt{\frac{2(f(X_{T}) - f(\xstar))}{\alpha}} \leq \sqrt{\frac{2(f(X_{0}) - f(\xstar))}{\alpha}}~.
    \end{equation*}
    Substituting this in the first statement of the corollary along with \(f(\xstar) \leq f(X_{T})\) by definition yields
    \begin{equation*}
        f(X^{\mathrm{s-avg}}(X_{0}; T)) - f(\xstar) \leq \frac{2}{3}(f(X_{0}) - f(\xstar)) + \frac{2}{3T\sqrt{\alpha}}(f(X_{0}) - f(\xstar))~.
    \end{equation*}
\end{proof}

The corollary for the minimizer along the trajectory is simpler to obtain owing to the fact that the minimum is at most any weighted average.

\begin{corollary}\label{cor:hf-convex-main-argmin}
Consider the setting of \Cref{lem:hf-convex-main}.
Let \(X^{\mathrm{min}}(X_{0}; T)\) denote the minimizer of \(f\) along the Hamiltonian flow trajectory \((\HF((X_{0}, \bm{0}); \tau))_{\tau \in [0, T]}\), i.e., \(f(X^{\mathrm{min}}(X_{0}; T)) \leq f(X_{t})\) for all \(t \in [0, T]\).
Then,
\begin{equation*}
    f(X^{\mathrm{min}}(X_{0}; T)) - f(x^{\star}) + \frac{1}{3T^{2}}\|X_{T} - x^{\star}\|^{2} \leq \frac{2}{3}(f(X_{0}) - f(x^{\star})) + \frac{1}{3T^{2}}\|X_{0} - x^{\star}\|^{2}~.
\end{equation*}
Moreover, when \(f\) is satisfies \emph{\quadgrowassump{}},
\begin{equation*}
    f(X^{\min}(X_{0}; T)) - f(\xstar) \leq \left(\frac{2}{3} + \frac{2}{3T^{2}\alpha}\right)(f(X_{0}) - f(\xstar))~.
\end{equation*}
\end{corollary}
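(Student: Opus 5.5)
The plan is to derive the first inequality from \Cref{cor:hf-convex-main} with \(z = x^{\star}\), using the observation that the minimum of \(f\) along the trajectory is no larger than any weighted average of \(f\) along it. The natural choice is the weight \(\bm{w}(t) = \frac{2}{T^{2}}(T - t)\). Since \(\bm{w}\) is nonnegative and integrates to one, we have
\begin{equation*}
f(X^{\min}(X_{0}; T)) - f(x^{\star}) \leq \frac{2}{T^{2}}\int_{0}^{T}(T - t)\big(f(X_{t}) - f(x^{\star})\big)\,\rmd t .
\end{equation*}
Combining this with \cref{eq:lem:hf-convex-main-stronger} from \Cref{rmk:stronger}, taken at \(z = x^{\star}\), gives exactly the first claimed inequality. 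Equivalently, one can take the bound from \Cref{lem:hf-convex-main-gen} before Jensen is applied and use the integration-by-parts computation from the proof of \Cref{cor:hf-convex-main}. This route also avoids Jensen's inequality entirely. It does not use convexity beyond \Cref{lem:hf-convex-main}, and it would work even if \(X^{\min}\) is taken as an infimum, provided it is attained by continuity on the compact interval.

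For the quadratic-growth statement, I would drop the nonnegative term \(\frac{1}{3T^{2}}\|X_{T} - x^{\star}\|^{2}\) on the left. On the right, I would apply \quadgrowassump{} at \(X_{0}\), which gives \(\|X_{0} - x^{\star}\|^{2} \leq \frac{2}{\alpha}(f(X_{0}) - f(x^{\star}))\). Substituting yields
\begin{equation*}
f(X^{\min}(X_{0}; T)) - f(x^{\star}) \leq \Big(\frac{2}{3} + \frac{2}{3\alpha T^{2}}\Big)(f(X_{0}) - f(x^{\star})),
\end{equation*}
which is the claimed bound.

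There is essentially no obstacle here. The only point needing care is to bound the minimum by the averaged function values rather than by \(f(X^{\mathrm{avg}})\): the minimizer along the trajectory need not be comparable to the value at the averaged point. That is why I invoke \cref{eq:lem:hf-convex-main-stronger} rather than \Cref{cor:hf-convex-main} itself.
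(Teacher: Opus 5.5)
Your proposal is correct and follows essentially the same route as the paper: it takes \cref{eq:lem:hf-convex-main-stronger} at \(z = x^{\star}\) and lower-bounds the weighted average of \(f(X_t)\) by \(f(X^{\min}(X_0;T))\), using \(\int_0^T (T-t)\,\rmd t = T^2/2\). It then drops the \(\|X_T - x^{\star}\|^2\) term and applies quadratic growth at \(X_0\).
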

\begin{proof}
    We use the following corollary of \Cref{lem:hf-convex-main}, which we obtain from integrating both sides of \cref{eq:hf-convex-weighted-avg} twice:
    $$
    \frac{2}{T^{2}}\int_{0}^{T} (T- t) (f(X_{t}) - f(z)) \rmd t + \frac{1}{3T^{2}} \|X_{T} - z\|^{2} \leq \frac{2}{3} (f(X_{0}) - f(z)) + \frac{1}{3T^{2}} \|X_{0} - z\|^{2}~.
    $$
    Since $f(X^{\min}(X_{0}; T)) \leq f(X_{t})$, letting $z=\xstar$ gives:
    $$
    \frac{2}{T^{2}}\int_{0}^{T} (T- t) (f(X^{\min}(X_{0}; T)) - f(\xstar)) \rmd t + \frac{1}{3T^{2}} \|X_{T} - \xstar\|^{2} \leq \frac{2}{3} (f(X_{0}) - f(z)) + \frac{1}{3T^{2}} \|X_{0} - \xstar\|^{2}~.
    $$
    Since $\int_0^T(T-t)\rmd t = \frac{T^2}{2}$, this further simplifies into:
    $$
    f(X^{\mathrm{min}}(X_{0}; T)) - f(x^{\star}) + \frac{1}{3T^{2}}\|X_{T} - x^{\star}\|^{2} \leq \frac{2}{3}(f(X_{0}) - f(x^{\star})) + \frac{1}{3T^{2}}\|X_{0} - x^{\star}\|^{2}~.
    $$
    When \(f\) is satisfies \emph{\quadgrowassump{}}, $\|X_0-\xstar\|^2\leq \frac{2}{\alpha}(f(X_0)-f(\xstar))$. Applying this bound and dropping the $\frac{1}{3T^2}\|X_T-\xstar\|^2$-term on the left gives:
    $$
    f(X^{\min}(X_{0}; T)) - f(\xstar) \leq \left(\frac{2}{3} + \frac{2}{3T^{2}\alpha}\right)(f(X_{0}) - f(\xstar))~.
    $$
\end{proof}

\subsubsection{Proof of \texorpdfstring{\Cref{thm:hf-cts-strong-cvx} and \Cref{cor:hf-cts-strong-cvx}}{Theorem 1 and Corollary 2}}
\label{app:prf:hf-cts-strong-cvx}

\begin{proofof}{\Cref{thm:hf-cts-strong-cvx}}
Notice that at each iteration \(k\), \(X_{k} = X^{\mathrm{avg}}(X_{k - 1}; T_{k})\).
Therefore, from \Cref{lem:hf-strong-convex-main} and the lower bound for \(T_{k}\), we have
\begin{equation*}
    f(X_{k}) - f(x^{\star}) \leq \frac{2}{3}\left(1 + \frac{1}{C^{2}}\right)(f(X_{k - 1}) - f(x^{\star}))~.
\end{equation*}
Iterating this inequality, we obtain the statement of the lemma.
\end{proofof}

\begin{proofof}{\Cref{cor:hf-cts-strong-cvx}}
    From \Cref{thm:hf-cts-strong-cvx}, we have for \(T_{k} = T = \frac{5}{2\sqrt{\alpha}}\) that
    \begin{equation*}
        f(X_{K}) - f(x^{\star}) \leq \left(\frac{58}{75}\right)^{K} \cdot (f(X_{0}) - f(\xstar))~.
    \end{equation*}
    Since \(K \geq 4\log \frac{f(X_{0}) - f(x^{\star})}{\varepsilon} \geq \left(\log \frac{75}{58}\right)^{-1} \log \frac{f(X_{0}) - f(x^{\star})}{\varepsilon}\),
    \begin{equation*}
        f(X_{K}) - f(\xstar) \leq \varepsilon~.
    \end{equation*}
    The total integration time is \(\sum_{k=1}^{K} T_{k} =  \left\lceil4 \log \frac{f(X_{0}) - f(x^{\star})}{\varepsilon}\right\rceil    \frac{5}{2\sqrt{\alpha}}\).
\end{proofof}

\subsubsection{Proof of \texorpdfstring{\Cref{lem:hf-strong-convex-main}}{Lemma 3}}
\label{app:prf:hf-strong-convex-main}
\begin{proof}
From \Cref{cor:hf-convex-main}, we have
\begin{equation*}
    f(X^{\mathrm{avg}}(X_{0}; T)) - f(x^{\star}) \leq \frac{2}{3}(f(X_{0})- f(x^{\star})) + \frac{1}{3T^{2}}\|X_{0} - x^{\star}\|^{2}~.
\end{equation*}
Since \(f\) satisfies \quadgrowassump{}, we have that \(\|X_{0} - x^{\star}\|^{2} \leq \frac{2}{\alpha}(f(X_{0}) - f(x^{\star}))\)~.
Substituting this in the inequality above results in the statement of the lemma.
\end{proof}

\subsubsection{Proof of \texorpdfstring{\Cref{lem:hf-convex-mix-main}}{Lemma 4}}
\label{prf:lem:hf-convex-mix-main}

\begin{proof}
\Cref{cor:hf-convex-main} and the energy conservation of the Hamiltonian dynamics guarantee \cref{eq:cor:hf-convex-main,eq:hf-conserve} respectively.
It remains to prove \cref{eq:cor:hf-convex-main-2}.
\Cref{lem:hf-convex-main} states that for \(h(t) := \langle X_{t} - z, Y_{t}\rangle\),
\begin{equation*}
    \dot{h}(t) \leq -3(f(X_{t}) - f(z)) + 2(f(X_{0}) - f(z))~.
\end{equation*}
When \(z \leftarrow \xstar\), we have $f(X_t) - f(x^*) \ge 0$, and thus from the above, \(\dot{h}(t) \leq 2(f(X_{0}) - f(\xstar))\).
Integrating both sides gives
\begin{equation*}
    h(t) - h(0) \leq 2t(f(X_{0}) - f(\xstar))~.
\end{equation*}
Since \(h(0) = 0\) and \(h(t) = \frac{1}{2} \frac{\rmd}{\rmd t}\|X_{t} - \xstar\|^{2}\), integrating again yields
\begin{equation*}
    \frac{1}{2}\|X_{t} - \xstar\|^{2} - \frac{1}{2}\|X_{0} - \xstar\|^{2} \leq t^{2} \cdot (f(X_{0}) - f(\xstar))~.
\end{equation*}
Multiplying both sides by \(\frac{2}{T^{2}}(T- t)\) and integrating from \(t = 0\) to \(t = T\) gives
\begin{equation*}
    \frac{1}{2}\int_{0}^{T} \frac{2(T- t)}{T^{2}} \|X_{t} - \xstar\|^{2} \rmd t - \frac{1}{2}\|X_{0} - \xstar\|^{2} \leq \frac{T^{2}}{6} (f(X_{0}) - f(x^{\star}))~.
\end{equation*}
By Jensen's inequality,
\begin{equation*}
    \frac{1}{2}\|X^{\mathrm{avg}}(X_{0}; T) - \xstar\|^{2} \leq \frac{1}{2}\int_{0}^{T} \frac{2(T- t)}{T^{2}} \|X_{t} - \xstar\|^{2} \rmd t
\end{equation*}
and hence
\begin{equation*}
    \frac{1}{2}\|X^{\mathrm{avg}}(X_{0}; T) - x^{\star}\|^{2} - \frac{1}{2}\|X_{0} - x^{\star}\|^{2} \leq \frac{T^{2}}{6}(f(X_{0}) - f(x^{\star}))
\end{equation*}
which proves \cref{eq:cor:hf-convex-main-2}.
The weighted sum \(\lambda \cdot \text{\cref{eq:cor:hf-convex-main}} + \text{\cref{eq:cor:hf-convex-main-2}} + \lambda^{2} \cdot \text{\cref{eq:hf-conserve}}\) (stated below in order for convenience)
\begin{align*}
    f(X^{\mathrm{avg}}(X_{0}; T)) - f(\xstar) + \frac{1}{3T^{2}}\|X_{T} - \xstar\|^{2} &\leq \frac{2}{3}(f(X_{0}) - f(\xstar)) + \frac{1}{3T^{2}}\|X_{0} - \xstar\|^{2} \\
    \frac{1}{3T^{2}}\|X^{\mathrm{avg}}(X_{0}; T) - \xstar\|^{2} &\leq \frac{1}{9}(f(X_{0}) - f(\xstar)) + \frac{1}{3T^{2}}\|X_{0} - \xstar\|^{2} \\
    f(X_{T}) - f(\xstar) &\leq f(X_{0}) - f(\xstar)~
\end{align*}
yields \(\textbf{LHS} \leq \textbf{RHS}\) where:

\textbf{LHS:~~}
\begin{align*}
    \lambda \cdot (f(X^{\mathrm{avg}}(X_{0}; T)) - f(\xstar)) + \lambda^{2} \cdot (f(X_{T}) - f(\xstar)) &\geq \lambda(1 + \lambda) \cdot (f(X^{\mathrm{mix}}(X_{0}; T)) - f(\xstar))~,\\
    \frac{\lambda}{3T^2} \cdot \|X_{T} - \xstar\|^{2} + \frac{1}{3T^2} \|X^{\mathrm{avg}}(X_{0}; T) - \xstar\|^{2} &\geq \frac{(1 + \lambda)}{3T^2} \cdot \|X^{\mathrm{mix}}(X_{0}; T) - \xstar\|^{2}
\end{align*}
and the inequalities are due to Jensen's inequality.

\textbf{RHS:~~} The RHS simplifies to
\begin{equation*}
    \left(\lambda^{2} + \frac{2\lambda}{3} + \frac{1}{9}\right) (f(X_{0}) - f(\xstar)) + \frac{1}{3T^{2}}(\lambda + 1) \|X_{0} - \xstar\|^{2}~.
\end{equation*}
Dividing both sides by \(\lambda(1 + \lambda)\), we get the claimed result:
\begin{equation*}
    f(X^{\mathrm{mix}}(X_{0}; T)) - f(\xstar) + \frac{1}{3T^{2}\lambda}\|X^{\mathrm{mix}}(X_{0}; T) - \xstar\|^{2} \leq C_{\lambda} \cdot (f(X_{0}) - f(\xstar)) + \frac{1}{3T^{2}\lambda}\|X_{0} - \xstar\|^{2}~.
\end{equation*}
\end{proof}

\subsubsection{Proof of \texorpdfstring{\Cref{thm:hf-cts-cvx} and \Cref{cor:hf-cts-cvx}}{Theorem 2 and Corollary 3}}
\label{app:prf:hf-cts-cvx}

\begin{proofof}{\Cref{thm:hf-cts-cvx}}
By definition of \(X_{k}\), for any \(k \in [K]\) it holds that \(X_{k} = X^{\mathrm{mix}}(X_{k - 1}; T_{k})\).
From \Cref{lem:hf-convex-mix-main}, we know that
\begin{align*}
    f(X_{k}) - f(x^{\star}) + \frac{1}{3T_{k}^{2}}\|X_{k} - x^{\star}\|^{2} 
    & = f(X^{\mathrm{mix}}(X_{k - 1}; T_{k})) - f(x^{\star}) + \frac{1}{3T_{k}^{2}}\|X_{k} - x^{\star}\|^{2} \\
    &\leq \frac{8}{9} \cdot (f(X_{k - 1}) - f(x^{\star})) + \frac{1}{3T_{k}^{2}}\|X_{k - 1} - x^{\star}\|^{2} \\
    &\leq \frac{8}{9} \left\{f(X_{k - 1}) - f(x^{\star}) + \frac{1}{3T_{k - 1}^{2}}\|X_{k - 1} - x^{\star}\|^{2}\right\}
\end{align*}
where we use the condition that \(T_{k}^{2} \geq \frac{9}{8}T_{k- 1}^{2}\).
Iterating this inequality, we get
\begin{equation*}
    f(X_{K}) - f(x^{\star}) \leq \left(\frac{8}{9}\right)^{K} \left\{f(X_{0}) - f(x^{\star}) + \frac{1}{3}\|X_{0} - \xstar\|^{2}\right\}~.
\end{equation*}
\end{proofof}

\begin{proofof}{\Cref{cor:hf-cts-cvx}}
For the specified choice of \(K\), we obtain from \Cref{thm:hf-cts-cvx} that \(f(X_{K}) - f(\xstar) \leq \varepsilon\).
The total integration time is bounded by:
\begin{align*}
    \sum_{k=1}^{K} T_{k} = \frac{3}{3 - \sqrt{8}} \cdot \left(\frac{9}{8}^{\nicefrac{K}{2}} - 1\right) &\leq \frac{3}{3 - \sqrt{8}} \cdot \left(\sqrt{\frac{f(X_{0})- f(x^{\star}) + \frac{1}{3}\|X_{0} - x^{\star}\|^{2}}{\varepsilon}} - 1\right)\\
    &\leq3(3+\sqrt{8})\left(\sqrt{\frac{f(X_{0})- f(x^{\star}) + \frac{1}{3}\|X_{0} - x^{\star}\|^{2}}{\varepsilon}}\right)\\
    &\leq 18\left(\sqrt{\frac{f(X_{0})- f(x^{\star}) + \frac{1}{3}\|X_{0} - x^{\star}\|^{2}}{\varepsilon}}\right).
\end{align*}
\end{proofof}

\section{Extended discussion of \texorpdfstring{\Cref{sec:discrete-algos}}{Section 4}}

\subsection{Proofs of the statements in \texorpdfstring{\Cref{sec:discrete-algos}}{Section 4}}

\subsubsection{Proof of \texorpdfstring{\Cref{lem:hf-extg-nonincrease-ham}}{Lemma 5}}
\label{app:prf:hf-extg-nonincrease-ham}

\begin{proof}
Consider the difference in the Hamiltonian at successive iterations:
\begin{align*}
    f(x_{n + 1}) + &\frac{1}{2}\|y_{n + 1}\|^{2} - f(x_{n}) - \frac{1}{2}\|y_{n}\|^{2} \\
    &\leq \langle \nabla f(x_{n + 1}), x_{n + 1} - x_{n}\rangle + \frac{1}{2}\|y_{n} - \eta\nabla f(x_{n + 1})\|^{2} - \frac{1}{2}\|y_{n}\|^{2} \\
    &= \langle \nabla f(x_{n + 1}), \eta y_{n} - \eta^{2}\nabla f(x_{n + \frac{1}{2}})\rangle + \frac{\eta^{2}}{2}\|\nabla f(x_{n + 1})\|^{2} - \eta \langle y_{n}, \nabla f(x_{n + 1})\rangle \\
    &= -\eta^{2}\langle \nabla f(x_{n + 1}), \nabla f(x_{n + \frac{1}{2}})\rangle + \frac{\eta^{2}}{2}\|\nabla f(x_{n + 1})\|^{2} \\
    &= \frac{\eta^{2}}{2}\left(\|\nabla f(x_{n + 1}) - \nabla f(x_{n + \frac{1}{2}})\|^{2} - \|\nabla f(x_{n + \frac{1}{2}})\|^{2}\right)~.
\end{align*}
Since \(x_{n + 1} = x_{n + \frac{1}{2}} - \eta^{2}\nabla f(x_{n + \frac{1}{2}})\), we have by smoothness of \(f\) that
\begin{equation*}
    \|\nabla f(x_{n + 1}) - \nabla f(x_{n + \frac{1}{2}})\|^{2} \leq L^{2}\|x_{n + 1} - x_{n + \frac{1}{2}}\|^{2} = L^{2}\eta^{4}\|\nabla f(x_{n + \frac{1}{2}})\|^{2}~.
\end{equation*}
When \(\eta \leq \frac{1}{\sqrt{L}}\),
\begin{equation*}
    f(x_{n + 1}) + \frac{1}{2}\|y_{n + 1}\|^{2} - f(x_{n}) - \frac{1}{2}\|y_{n}\|^{2} \leq \frac{\eta^{2}(L^{2}\eta^{4} - 1)}{2}\|\nabla f(x_{n + \frac{1}{2}})\|^{2} \leq 0~.
\end{equation*}
\end{proof}

\subsubsection{Proof of \texorpdfstring{\Cref{lem:hf-extg-convex-main}}{Lemma 6}}
\label{sec:prf:hf-extg-convex-main}

To prove \Cref{lem:hf-extg-convex-main}, we first discuss some parallels between the continuous-time and discrete-time analysis that we highlight to provide intuition.

    \begin{table}[H]
    \centering
    \small
    \renewcommand{\arraystretch}{1.2}
    \caption{Parallels between continuous-time and discrete-time analysis.}
    \label{tab:roadmap_3col}
    \begin{tabular}{p{0.3\linewidth} p{0.3\linewidth} p{0.33\linewidth}}
    \hline
    \textbf{} & \textbf{Continuous time} & \textbf{Discrete time} \\
    \hline
    
    \textbf{Evolution rule} &
    Hamiltonian flow (\ref{eq:HamFlow})&
    Extragradient Integrator (\ref{eqn:extg-update}) \\[2pt]
    
    \textbf{Hamiltonian property} &
    Conservation (\Cref{lem:hamflow-conserve}) &
    Dissipation (\Cref{lem:hf-extg-nonincrease-ham}) \\[2pt]
    
    \textbf{Boundary term} &
    $h_t=\langle X_t-z, Y_t\rangle$ &
    $h_n=\langle x_n-z, y_n\rangle$ \\[2pt]

    \textbf{Strategy} &
    Bound on $\dot{h}_{t}$ in terms of optimality gap (\Cref{lem:hf-convex-main}) &
    Bound on $h_{n+1}-h_n$ in terms of optimality gap (\Cref{lem:hf-extg-convex-hnterm}) \\[2pt]

    \multirow{2}{*}{\textbf{Average}} &
    \(X^{\mathrm{avg}}(X_{0}; T)=\) & \(x^{\mathrm{avg}}(x_{0}; N)=\) \\
    & \(\frac{2}{T^{2}}\int_{0}^{T} (T- t)X_{t} \rmd t\) & \(\frac{2}{N(N+1)}\sum_{n=1}^{N}(N - n + 1)x_{n}\).
    \end{tabular}
    \end{table}

\begin{lemma}
\label{lem:hf-extg-convex-hnterm}
Assume \(f\) satisfies \convexassump{} and \smoothassump{}.
Assume \(f\) is convex and \(L\)-smooth.
Let \(\{(x_{n}, y_{n})\}_{n \geq 1}\) be generated by the extragradient integrator \emph{(\ref{eqn:extg-update})} from any initial \(x_{0} \in \bbR^{d}\) and \(y_{0} = \bm{0}\), and step size \(\eta \leq \frac{1}{\sqrt{L}}\).
Then, for any \(N \geq 1\) and reference point \(z \in \bbR^{d}\), the following holds:
\begin{equation*}
    \langle x_{N} - z, y_{N}\rangle \leq 2\eta N(f(x_{0}) - f(z)) - 3\eta \sum_{n=1}^{N}(f(x_{n}) - f(z)).
\end{equation*}
\end{lemma}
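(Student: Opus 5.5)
The plan is to mimic the continuous-time proof of \Cref{lem:hf-convex-main}: prove a one-step inequality for $h_n := \langle x_n - z, y_n\rangle$, then telescope. Concretely, I will aim to show that for every $n \ge 0$,
\begin{equation*}
h_{n+1} - h_n \le 2\eta\,(f(x_0) - f(z)) - 3\eta\,(f(x_{n+1}) - f(z)).
\end{equation*}
Summing this over $n = 0, \dots, N-1$ and using $h_0 = 0$ (since $y_0 = \bm{0}$) gives the claim directly. The three ingredients play the same roles as steps (a), (b), (c) in \Cref{lem:hf-convex-main}. The update rule replaces $\dot h = \|Y\|^2 - \langle X - z, \nabla f(X)\rangle$. \Cref{lem:hf-extg-nonincrease-ham}, iterated from $y_0 = \bm{0}$, replaces energy conservation in the form $\|y_{n+1}\|^2 \le 2(f(x_0) - f(x_{n+1}))$. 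Convexity gives $-\langle x_{n+1} - z, \nabla f(x_{n+1})\rangle \le -(f(x_{n+1}) - f(z))$.

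For the one-step expansion, write $g_{n+1} := \nabla f(x_{n+1})$ and $g_{n+1/2} := \nabla f(x_{n+1/2})$, and split
\begin{equation*}
h_{n+1} - h_n = \langle x_{n+1} - x_n, y_n\rangle + \langle x_{n+1} - z, y_{n+1} - y_n\rangle .
\end{equation*}
Substituting $x_{n+1} - x_n = \eta y_n - \eta^2 g_{n+1/2}$ and $y_{n+1} - y_n = -\eta g_{n+1}$ gives
\begin{equation*}
h_{n+1} - h_n = \eta\|y_n\|^2 - \eta^2\langle g_{n+1/2}, y_n\rangle - \eta\langle x_{n+1} - z, g_{n+1}\rangle .
\end{equation*}
The last term is at most $-\eta(f(x_{n+1}) - f(z))$ by convexity. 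It then remains to show
\begin{equation*}
\eta\|y_n\|^2 - \eta^2\langle g_{n+1/2}, y_n\rangle \le \eta\|y_{n+1}\|^2 + (\text{nonpositive terms}),
\end{equation*}
after which the energy bound $\eta\|y_{n+1}\|^2 \le 2\eta(f(x_0) - f(x_{n+1}))$ produces the coefficients $2$ and $3$ exactly as in continuous time.

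The main obstacle is this comparison, since in discrete time it does not hold for free. Expanding $\|y_{n+1}\|^2 = \|y_n\|^2 - 2\eta\langle y_n, g_{n+1}\rangle + \eta^2\|g_{n+1}\|^2$, the difference between the two sides is
\begin{equation*}
\eta^2\langle y_n, 2g_{n+1} - g_{n+1/2}\rangle - \eta^3\|g_{n+1}\|^2 ,
\end{equation*}
and this has no fixed sign. My first attempt will be to avoid bounding $\|y_{n+1}\|^2$ only through the crude consequence of \Cref{lem:hf-extg-nonincrease-ham}. Instead I will keep the exact dissipation identity from its proof,
\begin{equation*}
H_{n+1} - H_n \le \tfrac{\eta^2}{2}\bigl(\|g_{n+1} - g_{n+1/2}\|^2 - \|g_{n+1/2}\|^2\bigr),
\end{equation*}
and also the sharper convexity bound $f(x_n) - f(x_{n+1}) \ge \langle g_{n+1}, x_n - x_{n+1}\rangle$, which contains the same cross term $\eta\langle g_{n+1}, y_n\rangle$. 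With these, the cross terms $\langle y_n, g_{n+1}\rangle$ and $\langle y_n, g_{n+1/2}\rangle$ should cancel, and the leftover pieces should be quadratic in $g_{n+1}$ and $g_{n+1/2}$. I would control those leftovers using $L$-smoothness, $\|g_{n+1} - g_{n+1/2}\| \le L\eta^2\|g_{n+1/2}\|$, together with $\eta \le 1/\sqrt{L}$.

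If this direct route leaves a stubborn term, the fallback is to apply convexity at $x_n$ or at $x_{n+1/2}$ instead of at $x_{n+1}$. Alternatively, one can use the other splitting $h_{n+1} - h_n = \langle x_{n+1} - x_n, y_{n+1}\rangle + \langle x_n - z, y_{n+1} - y_n\rangle$ and choose whichever makes the inner products with $y_n$ telescope against the energy decrease.
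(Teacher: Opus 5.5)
\textbf{There is a genuine gap.} The one-step inequality you set out to prove,
\[
h_{n+1}-h_n \le 2\eta\,(f(x_0)-f(z)) - 3\eta\,(f(x_{n+1})-f(z)),
\]
is false in general, so no combination of local bounds can establish it. With $z=x_{n+1}$ it is exactly your reduced claim $\|y_n\|^2-\eta\langle g_{n+1/2},y_n\rangle \le 2(f(x_0)-f(x_{n+1}))$.

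Here is a counterexample. Take $d=1$, $L=\eta=1$, and
\[
f(s)=-s \ (s\le 0),\qquad f(s)=\tfrac{s^2}{2}-s \ (s\in[0,3]),\qquad f(s)=\tfrac32+2(s-3) \ (s\ge 3),
\]
which is convex and $1$-smooth. Start from $(x_0,y_0)=(-210,0)$.
\begin{itemize}
\item For $k\le 20$ one gets $x_k=-210+\frac{k(k+1)}{2}$ and $y_k=k$, so $(x_{20},y_{20})=(0,20)$.
\item The next step gives $x_{20+1/2}=20$, $x_{21}=18$, $y_{21}=18$, and $f(x_{21})=31.5$.
\item With $z=x_{21}$, $h_{21}-h_{20}=0-(0-18)\cdot 20=360$, while your right-hand side is $2(210-31.5)=357$.
\end{itemize}
The lemma's conclusion still holds here; only your per-step strengthening fails.

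The mechanism is the obstacle you flagged, and the hoped-for cancellation does not occur. The cross terms enter with weights $2$ and $1$, leaving $\eta^2\langle y_n,2g_{n+1}-g_{n+1/2}\rangle\approx\eta^2\langle y_n,\nabla f\rangle$. This term is positive whenever the iterate moves uphill, and it is not of the $O(\eta^3\|g\|^2)$ type that smoothness can absorb. If you use only the current step's dissipation, what you need is exactly $-\eta\langle g_{n+1/2},y_n\rangle\le 2(f(x_n)-f(x_{n+1}))$. To leading order this says $\langle\nabla f(x_n),y_n\rangle\le 0$. The example shows that even the accumulated slack $2(H_0-H_{n+1})$ does not cover the deficit when $\eta^2L=1$.

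\textbf{What is missing} is that the per-step bound must put weight on $f(x_n)$, not only on $f(x_{n+1})$. The paper proceeds in three steps:
\begin{itemize}
\item It bounds $\eta\|y_n\|^2\le 2\eta(f(x_0)-f(x_n))$, using the energy at index $n$ rather than $n+1$.
\item It treats the cross term by convexity at $x_{n+1/2}$:
\[
-\eta^2\langle g_{n+1/2},y_n\rangle=-\eta\langle g_{n+1/2},x_{n+1/2}-x_n\rangle\le \eta(f(x_n)-f(x_{n+1/2}))\le\eta(f(x_n)-f(x_{n+1})).
\]
The last step is the descent property of the inner gradient step, valid since $\eta^2\le 1/L$.
\item Combined with convexity at $x_{n+1}$ for the remaining term, this gives
\[
h_{n+1}-h_n\le 2\eta(f(x_0)-f(z))-\eta(f(x_n)-f(z))-2\eta(f(x_{n+1})-f(z)).
\]
In the example this right-hand side equals $388.5\ge 360$.
\end{itemize}
Summing over $n=0,\dots,N-1$ leaves $-3\eta\sum_{n=1}^{N-1}(f(x_n)-f(z))-\eta(f(x_0)-f(z))-2\eta(f(x_N)-f(z))$. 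The off-by-one boundary term is then repaired using $f(x_N)\le f(x_0)$.

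Your fallback of applying convexity at $x_{n+1/2}$ is exactly the right move. However, it only closes once you drop the uniform per-step target in favor of this split one and add the boundary correction in the telescoping.
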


We prove this lemma in \Cref{prf:lem:hf-extg-convex-hnterm}.
\Cref{lem:hf-extg-convex-main} builds on this lemma, proven below.
\begin{proofof}{\Cref{lem:hf-extg-convex-main}}
We begin by bounding the difference \(\Delta_{n} := \|x_{n + 1} - z\|^{2} - \|x_{n} - z\|^{2}\), and recall that \(h_{n} = \langle x_{n} - z, y_{n}\rangle\).
Using the relation between \(x_{n + 1}, y_{n + 1}\) from \(x_{n}, y_{n}\) as given in \ref{eqn:extg-update}, we have
\begin{align*}
    \Delta_{n} &= \|x_{n + 1} - x_{n}\|^{2} + 2\langle x_{n + 1} - x_{n}, x_{n} - z\rangle \\
    &= \|\eta y_{n} - \eta^{2} \nabla f(x_{n + \frac{1}{2}})\|^{2} + 2\langle \eta y_{n} - \eta^{2} \nabla f(x_{n + \frac{1}{2}}), x_{n} - z\rangle \\
    &= 2\eta h_{n} - 2\eta^{2}\langle \nabla f(x_{n + \frac{1}{2}}), x_{n} - z\rangle - 2\eta^{3}\langle y_{n}, \nabla f(x_{n + \frac{1}{2}})\rangle + \eta^{2}\|y_{n}\|^{2} + \eta^{4}\|\nabla f(x_{n + \frac{1}{2}})\|^{2} \\
    &= 2\eta h_{n} - 2\eta^{2}\langle \nabla f(x_{n + \frac{1}{2}}), x_{n}+\eta y_n - z\rangle+ \eta^{2}\|y_{n}\|^{2} + \eta^{4}\|\nabla f(x_{n + \frac{1}{2}})\|^{2} \\
    &= 2\eta h_{n} - 2\eta^{2}\langle \nabla f(x_{n + \frac{1}{2}}), x_{n + \frac{1}{2}} - z\rangle + \eta^{2} \|y_{n}\|^{2} + \eta^{4}\|\nabla f(x_{n + \frac{1}{2}})\|^{2}~\\
    &\leq 2\eta h_{n} + 2\eta^{2}(f(z) - f(x_{n +\frac{1}{2}})) +  \eta^{2} \|y_{n}\|^{2} + \eta^{4}\|\nabla f(x_{n + \frac{1}{2}})\|^{2}~,
\end{align*}
where the final inequality above uses the convexity of \(f\). Since $x_{n+1} = x_{n+\frac{1}{2}}-\eta^2 \nabla f(x_{n+\frac{1}{2}})$, by smoothness of \(f\),
\begin{equation*}
    f(x_{n + 1}) \leq f(x_{n + \frac{1}{2}}) - \eta^{2}\left(1 - \frac{L\eta^{2}}{2}\right)\|\nabla f(x_{n + \frac{1}{2}})\|^{2}~.
\end{equation*}
When \(\eta \leq \frac{1}{\sqrt{L}}\), this implies
\begin{equation*}
\frac{\eta^2}{2}\|\nabla f(x_{n + \frac{1}{2}})\|^{2}\leq\eta^{2}\left(1 - \frac{L\eta^{2}}{2}\right)\|\nabla f(x_{n + \frac{1}{2}})\|^{2} \leq f(x_{n + \frac{1}{2}})-f(x_{n + 1})~.
\end{equation*}
Substituting this bound and using \Cref{lem:hf-extg-nonincrease-ham} to write \(\|y_{n}\|^{2} \leq 2(f(x_{0}) - f(x_{n}))\), we get
\begin{align*}
    \Delta_{n} &\leq 2\eta h_{n} + 2\eta^{2}(f(z) - f(x_{n + \frac{1}{2}})) + 2\eta^{2} \cdot (f(x_{0}) - f(x_{n})) + 2\eta^{2}(f(x_{n + \frac{1}{2}}) - f(x_{n + 1})) \\
    &= 2\eta h_{n} +2\eta^{2}(f(x_{0}) - f(x_n))+ 2\eta^{2}(f(z) - f(x_{n+1})).\numberthis\label{eq:deltan-bound}
\end{align*}
Recall \Cref{lem:hf-extg-convex-hnterm} guarantees that
\begin{align*}
    h_n &\leq 2\eta n(f(x_0)-f(z))-3\eta \sum_{n'=1}^n(f(x_{n'})-f(z))\\
    &= 2(n+1)\eta (f(x_0)-f(z))-3\eta\sum_{n'=1}^{n+1}(f(x_{n'})-f(z))+\eta (3f(x_{n+1})-2f(x_0)-f(z)).
\end{align*}
Substituting this in the bound for \(\Delta_{n}\), we get
\begin{align*}
    \Delta_{n} &\leq 2\eta h_{n} +2\eta^{2}(f(x_{0}) - f(x_n))+ 2\eta^{2}(f(z) - f(x_{n+1}))\\
    &=4\eta^2 (n+1)(f(x_0)-f(z))-6\eta^2  \sum_{n'=1}^{n+1}(f(x_{n'})-f(z)) +2\eta^2  (3f(x_{n+1})-2f(x_0)-f(z)) \\
    &\qquad +2\eta^{2}(f(x_{0}) - f(x_n))+ 2\eta^{2}(f(z) - f(x_{n+1}))\\
    &=4\eta^2 (n+1)(f(x_0)-f(z))-6\eta^2  \sum_{n'=1}^{n+1}(f(x_{n'})-f(z))+2\eta^{2}(f(x_{n+1})-f(x_n))\\
    &\qquad +2\eta^{2}(f(x_{n+1})-f(x_0))\\
    &\leq 4\eta^2 (n+1)(f(x_0)-f(z))-6\eta^2  \sum_{n'=1}^{n+1}(f(x_{n'})-f(z))+2\eta^{2}(f(x_{n+1})-f(x_n)),
\end{align*}
where the last inequality is due to \Cref{lem:hf-extg-nonincrease-ham}.
We now sum this inequality from $n=0$ to $n=N-1$:
\begin{align*}
    \|x_{N} &- z\|^{2} - \|x_{0} - z\|^{2} \\
    &= \sum_{n=0}^{N - 1} \Delta_{n} \\
    &\leq \sum_{n=0}^{N-1}4\eta^2 (n+1)(f(x_0)-f(z))-6\eta^2\sum_{n=0}^{N-1}  \sum_{n'=1}^{n+1}(f(x_{n'})-f(z))+2\eta^{2}\sum_{n=0}^{N-1}(f(x_{n+1})-f(x_n))\\
    &=\sum_{n=1}^{N} 4\eta^{2}n(f(x_{0}) - f(z)) - 6\eta^{2}\sum_{n=1}^{N}\sum_{n'=1}^{n}(f(x_{n'}) - f(z)) + 2\eta^{2}(f(x_{N}) - f(x_{0})) \\
    &=2N(N+1)\eta^2(f(x_0)-f(z))-6\eta^2\sum_{n=1}^{N}(N-n+1)(f(x_n)-f(z))+2\eta^2(f(x_N)-f(x_0))\\
    &\leq 2N(N+1)\eta^2(f(x_0)-f(z))-6\eta^2\sum_{n=1}^{N}(N-n+1)(f(x_n)-f(z))~
\end{align*}
where the last inequality again uses \Cref{lem:hf-extg-nonincrease-ham}.
Dividing both sides by \(3\eta^{2}N(N + 1)\) and rearranging terms, we get
\begin{multline*}
    \frac{2}{N(N + 1)}\sum_{n=1}^{N}(N - n + 1)(f(x_{n}) - f(z)) + \frac{1}{3\eta^{2}N(N + 1)}\|x_{N} - z\|^{2} \\
    \leq \frac{2}{3}(f(x_{0}) - f(z)) + \frac{1}{3\eta^{2}N(N + 1)}\|x_{0} - z\|^{2}~.
\end{multline*}
Since \(\frac{2}{N(N + 1)}\sum_{n=1}^{N} (N - n + 1) = 1\), we have from Jensen's inequality and the definition of \(x^{\mathrm{avg}}(x_{0}; N)\) that
\begin{multline*}
    f(x^{\mathrm{avg}}(x_{0}; N )) - f(z) + \frac{1}{3\eta^{2}N(N+1)}\|x_{N} - z\|^{2} 
    \leq \frac{2}{3}(f(x_{0}) - f(z)) + \frac{1}{3\eta^{2}N(N + 1)}\|x_{0} - z\|^{2}~.
\end{multline*}
\end{proofof}

\subsubsection{Proofs of \texorpdfstring{ \Cref{thm:hf-extg-strong-cvx,cor:hf-extg-strong-cvx}}{Theorem 3 and Corollary 4}}
\label{app:prf:hf-extg-strong-cvx}

\begin{proofof}{\Cref{thm:hf-extg-strong-cvx}}
    Recall that \(x_{k}\) produced by \nameref{alg:disc_phfopt} satisfies \(x_{k} = x^{\mathrm{avg}}(x_{k - 1}; N_{k})\).
From \Cref{lem:hf-extg-convex-main},
\begin{align*}
    f(x_{k}) - f(x^{\star}) &= f(x^{\mathrm{avg}}(x_{k - 1}; N_{k})) - f(x^{\star}) \\
    &\leq \frac{2}{3}(f(x_{k - 1}) - f(x^{\star})) + \frac{1}{3\eta^{2}N_{k}(N_{k} + 1)}\|x_{k - 1} - x^{\star}\|^{2} \\
    &\overset{(a)}\leq \left(\frac{2}{3} + \frac{2}{3\eta^{2}\alpha N_{k}(N_{k} + 1)}\right) (f(x_{k - 1}) - f(x^{\star})) \\
    &\overset{(b)}\leq \left(\frac{2}{3} + \frac{2}{3c^{2}}\right) \cdot (f(x_{k - 1}) - f(\xstar))~.
\end{align*}
In step \((a)\), we use the fact that \(f\) satisfies \quadgrowassump{}, and in step \((b)\) we use the condition that \(N_{k} \geq \lceil \frac{c}{\eta\sqrt{\alpha}}\rceil \geq \frac{c}{\eta\sqrt{\alpha}}\).
Iterating this inequality,
\begin{equation*}
    f(x_{K}) - f(\xstar) \leq \left(\frac{2}{3} + \frac{2}{3c^{2}}\right)^{K} \cdot (f(x_{0}) - f(\xstar))~.
\end{equation*}

\end{proofof}
\begin{proofof}{\Cref{cor:hf-extg-strong-cvx}}
Since \(N_{k} = \lceil \frac{5}{2}\sqrt{\kappa}\rceil \geq \frac{5}{2}\sqrt{\kappa}\), 
from \Cref{thm:hf-extg-strong-cvx} we have
\begin{equation*}
    f(x_{K}) - f(x^{\star}) \leq \left(\frac{58}{75}\right)^{K} \cdot (f(x_{0}) - f(x^{\star}))~.
\end{equation*}
Since \(K = \left\lceil 4\log \frac{f(x_{0}) - f(x^{\star})}{\varepsilon}\right\rceil \geq \left(\log \frac{75}{58}\right)^{-1}\frac{f(x_{0}) - f(x^{\star})}{\varepsilon}\), 
\begin{equation*}
    f(x_{K}) - f(x^{\star}) \leq \varepsilon~.
\end{equation*}
Since at each iteration of the extragradient integrator we use 2 gradient calls, we have the total gradient complexity as
\begin{equation*}
    2 \cdot \left\lceil \frac{5}{2}\sqrt{\frac{L}{\alpha}}\right\rceil \cdot \left\lceil 4 \log \frac{f(x_{0}) - f(x^{\star})}{\varepsilon} \right\rceil~.
\end{equation*}    
\end{proofof}

\subsubsection{Proof of \texorpdfstring{\Cref{lem:hf-extg-convex-mix-main}}{Lemma 7}}
\label{prf:lem:hf-extg-convex-mix-main}

\begin{proof}
From \cref{eq:deltan-bound}, we have that \(\Delta_{n} := \|x_{n + 1} - \xstar\|^{2} - \|x_{n} - \xstar\|^{2}\) satisfies
\begin{equation*}
    \Delta_{n'} \leq 2\eta h_{n'} + 2\eta^{2} (f(x_{0}) - f(x_{n'}))~.
\end{equation*}
Setting $z=\xstar$ in \Cref{lem:hf-extg-convex-hnterm}, we could bound $h_{n'}$ using:
$$
\langle x_{n'} - \xstar, y_{n'}\rangle \leq 2\eta n'(f(x_{0}) - f(\xstar)) - 3\eta \sum_{n=1}^{n'}(f(x_{n}) - f(\xstar)) \leq 2\eta n'(f(x_{0}) - f(\xstar)).
$$
Plugging this bound into the upper bound for $\Delta_{n'}$ gives:
\begin{equation*}
    \Delta_{n'} \leq 4\eta^{2} n'(f(x_{0}) -f(\xstar)) + 2\eta^{2} (f(x_{0}) - f(x_{n'})) \leq 4\eta^{2}n'(f(x_{0}) - f(x^{\star})) + 2\eta^{2}(f(x_{0})- f(x^{\star}))~.
\end{equation*}
Summing both sides from \(n' =0\) to \(n - 1\),
\begin{equation*}
   \|x_{n} - \xstar\|^{2} - \|x_{0} - \xstar\|^{2} \leq 2\eta^{2} n^{2}(f(x_{0}) - f(\xstar))~.
\end{equation*}
Multiplying both sides by \(N - n + 1\) and summing from \(n =1\) to \(n = N\), we get
\begin{equation*}
    \sum_{n=1}^{N} (N - n + 1) \|x_{n} - \xstar\|^{2} \leq \frac{N(N + 1)}{2} \|x_{0} - \xstar\|^{2} + \frac{\eta^{2}N(N+1)^{2}(N + 2)}{6}(f(x_{0}) - f(x^{\star}))~.
\end{equation*}
Dividing both sides by \(N(N + 1)\), we get
\begin{equation*}
    \frac{2}{N(N + 1)}\sum_{n=1}^{N} (N - n + 1) \frac{\|x_{n} - \xstar\|^{2}}{2} \leq \frac{\|x_{0} - \xstar\|^{2}}{2} + \frac{\eta^{2}(N + 1)(N + 2)}{6}(f(x_{0}) - f(x^{\star}))~.
\end{equation*}
Note that \(\sum_{n=1}^{N}(N - n + 1) = \frac{N(N + 1)}{2}\), and therefore by Jensen's inequality and the definition of \(x^{\mathrm{avg}}(x_{0}; N)\) we get
\begin{equation*}
    \frac{1}{2}\|x^{\mathrm{avg}}(x_{0}; N) - \xstar\|^{2} \leq \frac{1}{2}\|x_{0} - \xstar\|^{2} + \frac{\eta^{2}(N + 1)(N + 2)}{6}(f(x_{0}) - f(\xstar))~.
\end{equation*}
Since \((N + 1)(N + 2) \leq \frac{3}{2}N(N + 1)\) for \(N \geq 4\), we have
\begin{align}
    \frac{1}{2}\|x^{\mathrm{avg}}(x_{0}; N) - \xstar\|^{2} &\leq \frac{1}{2}\|x_{0} - \xstar\|^{2} + \frac{\eta^{2}N(N + 1)}{4}(f(x_{0}) - f(\xstar))\nonumber\\
    \frac{1}{3\eta^{2}N(N + 1)}\|x^{\mathrm{avg}}(x_{0}; N) - x^{\star}\|^{2} &\leq \frac{1}{3\eta^{2}N(N + 1)}\|x_{0} - x^{\star}\|^{2} + \frac{1}{6}(f(x_{0}) - f(x^{\star}))~.\label{eq:cor:hf-extg-convex-main-2}
\end{align}

Like in the proof of \Cref{lem:hf-convex-mix-main}, we consider a weighted sum of the following inequalities:
\begin{align*}
f(x^{\mathrm{avg}}(x_{0}; N )) - f(x^\star) + \frac{\|x_{N} - x^\star\|^{2}}{3\eta^{2}N(N+1)} 
    &\leq \frac{2}{3}(f(x_{0}) - f(x^\star)) + \frac{\|x_{0} - x^\star\|^{2}}{3\eta^{2}N(N + 1)}~ & \text{(\cref{eqn:oracle_step_c_extg})} \\
    \frac{1}{3\eta^{2}N(N + 1)}\|x^{\mathrm{avg}}(x_{0}; N) - x^{\star}\|^{2} &\leq \frac{1}{3\eta^{2}N(N + 1)}\|x_{0} - x^{\star}\|^{2} + \frac{1}{6}(f(x_{0}) - f(x^{\star})) & \text{(\cref{eq:cor:hf-extg-convex-main-2})} \\
    f(x_{N}) - f(x^{\star}) &\leq f(x_{0}) - f(x^{\star}) & \text{(\cref{eq:hf-extg-nonincrease-ham})}
\end{align*}
with weights \(\lambda, 1, \lambda^{2}\) respectively.
This yields \(\textbf{LHS} \leq \textbf{RHS}\) where:

\textbf{LHS:}~~
\begin{align*}
    &\lambda \cdot (f(x^{\mathrm{avg}}(x_{0}; N) - f(x^{\star})) + \lambda^{2} \cdot (f(x_{N}) - f(\xstar))\\
    &+\frac{\lambda}{3\eta^{2}N(N+1)} \|x_{N} - x^{\star}\|^{2} +\frac{1}{3\eta^{2}N(N+1)}\|x^{\mathrm{avg}}(x_{0}; N) - x^{\star}\|^{2}\\
    &\geq \lambda(1 + \lambda) \cdot (f(x^{\mathrm{mix}}(x_{0}; N)) - f(x^{\star}))+ \frac{(1 + \lambda)}{3\eta^{2}N(N+1)}\cdot \|x^{\mathrm{mix}}(x_{0}; N) - \xstar\|^{2}~.
\end{align*}

\textbf{RHS:}~~
\begin{equation*}
    \left(\lambda^{2} + \frac{2\lambda}{3} + \frac{1}{6}\right)(f(x_{0}) - f(\xstar)) + \frac{1 + \lambda}{3\eta^{2}N(N + 1)}\|x_{0} - \xstar\|^{2}~.
\end{equation*}
After dividing both sides by \(\lambda(1 + \lambda)\), this finally returns
\begin{multline*}
    f(x^{\mathrm{mix}}(x_{0}; N)) - f(\xstar) + \frac{1}{3\lambda\eta^{2}N(N + 1)}\|x^{\mathrm{mix}}(x_{0}; N) - \xstar\|^{2} \\
    \leq c_{\lambda}(f(x_{0}) - f(\xstar)) + \frac{1}{3\lambda\eta^{2}N(N + 1)}\|x_{0} - \xstar\|^{2}
\end{multline*}
which is precisely the statement of the lemma.
\end{proof}

\subsubsection{Proofs of \texorpdfstring{\Cref{thm:hf-extg-cvx} and \Cref{cor:hf-extg-cvx}}{Theorem 4 and Corollary 5}}
\label{app:prf:hf-extg-cvx}

\begin{proofof}{\Cref{thm:hf-extg-cvx}}
    Note that for any \(k \geq 1\), \(x_{k} = x^{\mathrm{mix}}(x_{k-1}; N_{k})\) with \(\lambda = \lambda^{\star} = \frac{\sqrt{3} + 1}{2}\).
    Consequently, from \Cref{lem:hf-extg-convex-mix-main}, we have
    \begin{align*}
        f(x_{k}) &- f(x^{\star}) + \frac{1}{3\lambda^{\star}\eta^{2}N_{k}(N_{k} + 1)}\|x_{k} - x^{\star}\|^{2} \\
        &= f(x^{\mathrm{mix}}(x_{k-1}; N_{k})) - f(x^{\star}) + \frac{1}{3\lambda^{\star}\eta^{2}N_{k}(N_{k} + 1)}\|x^{\mathrm{mix}}(x_{k-1}; N_{k}) - x^{\star}\|^{2} \\
        &\leq c_{\lambda^{\star}}(f(x_{k- 1}) - f(x^{\star})) + \frac{1}{3\lambda^{\star}\eta^{2}N_{k}(N_{k} + 1)}\|x_{k-1} - x^{\star}\|^{2} \\
        &= c_{\lambda^{\star}} \left\{f(x_{k - 1}) - f(x^{\star}) + \frac{1}{3\lambda^{\star}\eta^{2}c_{\lambda^{\star}}N_{k}(N_{k} + 1)}\|x_{k - 1} - \xstar\|^{2}\right\} \\
        &\leq c_{\lambda^{\star}} \left\{f(x_{k - 1}) - f(x^{\star}) + \frac{1}{3\lambda^{\star}\eta^{2}N_{k - 1}(N_{k - 1} + 1)}\|x_{k - 1} - \xstar\|^{2} \right\}~.
    \end{align*}
    The final inequality uses the fact that \(N_{k}(N_{k} + 1) \geq \frac{1}{c_{\lambda^{\star}}}N_{k - 1}(N_{k - 1} + 1)\).
    Iterating this from \(k = 1\) to \(k = K\), we get
    \begin{equation*}
        f(x_{K}) - f(x^{\star}) \leq \left(\frac{\sqrt{3} + 1}{3}\right)^{K} \left\{f(x_{0}) - f(x^{\star}) + \frac{1}{3\lambda^{\star}\eta^{2}N_{0}(N_{0} + 1)}\|x_{0} - \xstar\|^{2} \right\}~.
    \end{equation*}
    Setting \(N_{0} = 4\) and \(\lambda^{\star} = \frac{\sqrt{3} + 1}{2}\) completes the proof.
\end{proofof}
\begin{proofof}{\Cref{cor:hf-extg-cvx}}
We first show that the desired inequality between $N_k$ and $N_{k+1}$ holds under the specified $\{N_k\}$ sequence. Define $q=\nicefrac{1}{\sqrt{c_\lambda^{\star}}}=\sqrt{\frac{3}{1+\sqrt{3}}} \leq 2$. By properties of $\lceil \cdot \rceil$, we calculate that
\begin{align*}
    N_{k+1}^2+N_{k+1} &= \lceil qN_{k}+\tfrac{1}{2}\rceil^2+\lceil qN_{k}+\tfrac{1}{2}\rceil\\
    &\geq (qN_{k}+\tfrac{3}{2})(qN_{k}+\tfrac{1}{2})\\
    &=q^2N_{k}^2+2qN_{k}+\tfrac{3}{4}\\
    &\geq q^2(N_{k}^2+N_{k}),
\end{align*}
where the last inequality followed because $q\leq 2$. 
Additionally, note that
\begin{align*}
    K &\geq \left\lceil \left(\log \frac{3}{1 + \sqrt{3}}\right)^{-1} \cdot \log \frac{\frac{L}{2}\|x_{0} - \xstar\|^{2} + \frac{(\sqrt{3} - 1)L}{60}\|x_{0} - \xstar\|^{2}}{\varepsilon}\right\rceil \\
    &\geq \left(\log\frac{3}{1 + \sqrt{3}}\right)^{-1} \cdot \log \frac{f(x_{0}) - f(x^{\star}) + \frac{1}{3\lambda^{\star}\eta^{2}N_{0}(N_{0} + 1)}\|x_{0} - \xstar\|^{2}}{\varepsilon}\,.
\end{align*}
Therefore, from \Cref{thm:hf-extg-cvx},
\begin{equation*}
    f(x_{K}) - f(\xstar) \leq \varepsilon\,.
\end{equation*}
To obtain the number of gradient evaluations, recall that by definition, \(N_{k + 1} \leq qN_{k} + \frac{3}{2}\).
Consequently,
$$
N_k \leq q^kN_0+\frac{3}{2}\left(\frac{q^{k}-1}{q-1} \right)\leq q^k\lp N_0+\frac{3}{2(q-1)}\rp.
$$
Then, summing both sides from \(k = 1\) to \(k = K\),
\begin{equation*}
    \sum_{k=1}^{K} N_{k} \leq  \left(\frac{N_{0}q}{q - 1} + \frac{3q}{2(q - 1)^{2}}\right) (q^{K} - 1) \leq  \left(\frac{N_{0}q}{q - 1} + \frac{3q}{2(q - 1)^{2}}\right) q^{K}~.
\end{equation*}
The quantity \(q^{K}\) can be simplified as
\begin{align*}
    q^K &=\exp(K\log q) \\
    &\leq \exp\lp\frac{1}{2}\log\lp\frac{3}{1+\sqrt{3}}\rp \cdot \left( 2\log\left(\frac{3}{1+\sqrt{3}}\right)^{-1} \cdot \log \lp\sqrt{\frac{L}{\epsilon}}\|x_0-\xstar\|\rp+1\right)\rp \\
    &=\sqrt{\frac{L}{\epsilon}}\|x_0-\xstar\| \cdot q\,.
\end{align*}
Substituting the values for \(N_{0}, q\), the constant \(\frac{N_{0}q^{2}}{q - 1} + \frac{3q^{2}}{2(q - 1)^{2}} \leq 810\), which completes the proof.
\end{proofof}

\subsubsection{Proof of \texorpdfstring{\Cref{lem:hf-extg-convex-hnterm}}{Lemma 15}}
\label{prf:lem:hf-extg-convex-hnterm}

\begin{proof}
Denote \(h_{n} = \langle x_{n} - z, y_{n}\rangle\).
We first compute the difference
\begin{align*}
    h_{n + 1} - h_{n} &= \langle x_{n + 1} - z, y_{n + 1}\rangle - \langle x_{n} - z, y_{n}\rangle \\
    &= \langle x_{n + 1} - z, y_{n + 1} - y_{n}\rangle + \langle x_{n + 1} - z, y_{n}\rangle - \langle x_{n} - z, y_{n}\rangle \\
    &= -\eta \langle x_{n + 1} - z, \nabla f(x_{n + 1})\rangle + \langle x_{n + 1} - x_{n}, y_{n}\rangle~.
\end{align*}
For the first term on the right hand side, we use the convexity of \(f\):
\begin{equation*}
    -\eta \langle x_{n + 1} - z, \nabla f(x_{n + 1})\rangle \leq \eta (f(z) - f(x_{n + 1}))~.
\end{equation*}
For the second term on the right hand side, we additionally use the smoothness of \(f\) and the form of the extragradient integrator to get
\begin{align*}
    \langle x_{n + 1} - x_{n}, y_{n}\rangle &= \langle \eta y_{n} - \eta^{2}\nabla f(x_{n + \frac{1}{2}}), y_{n}\rangle \\
    &= \eta \|y_{n}\|^{2} - \eta \langle \nabla f(x_{n + \frac{1}{2}}), x_{n + \frac{1}{2}} - x_{n}\rangle \\
    &\overset{(a)}\leq 2\eta(f(x_{0}) - f(x_{n})) + \eta (f(x_{n}) - f(x_{n} + \eta y_{n})) \\
    &\leq 2\eta f(x_{0}) - \eta f(x_{n}) - \eta f(x_{n + \frac{1}{2}}) \\
    &\overset{(b)}\leq 2\eta f(x_{0}) - \eta f(x_{n}) - \eta f(x_{n + 1})~.
\end{align*}
Steps \((a)\) and \((b)\) are due to the choice of \(\eta \leq \frac{1}{\sqrt{L}}\) from \Cref{lem:hf-extg-nonincrease-ham} and the fact that \(f(x_{n + 1}) \leq f(x_{n + \frac{1}{2}})\) respectively.
Combining the bounds, we get
\begin{equation*}
    h_{n + 1} - h_{n} \leq 2\eta (f(x_{0}) - f(z)) - \eta (f(x_{n}) - f(z)) - 2\eta (f(x_{n + 1}) - f(z))~.
\end{equation*}
Summing both sides from \(n =0\) to \(n = N - 1\) and noting that \(h_{0} = 0\),
\begin{align*}
    h_{N} &\leq 2\eta N(f(x_{0}) - f(z)) - 3\eta \sum_{n=1}^{N - 1} (f(x_{n}) - f(z)) - 2\eta (f(x_{N}) - f(z)) - \eta (f(x_{0}) - f(z)) \\
    &\leq 2\eta N(f(x_{0}) - f(z)) - 3\eta \sum_{n=1}^{N}(f(x_{n}) - f(z))
\end{align*}
where in the final inequality we use the fact that \(f(x_{N}) \leq f(x_{0})\).
\end{proof}
\subsection{Discretized Hamiltonian Flow for optimization with Averaging using \texorpdfstring{\ref{eq:implicit-integ}}{HFimp}}
\label{app:sec:dhfa-im}

Here, we discuss how the implicit integrator is also capable of achieving accelerated rates.
As discussed previously, the implicit integrator assumes access to the proximal oracle for \(f\).
The resulting algorithm~\nameref{alg:disc_pphfopt} is provided in~\Cref{alg:disc_pphfopt}.
By \Cref{lem:hf-imp-nonincrease-ham}, when \(f\) is convex, the iterates generated by the implicit integrator produce a sequence of non-increasing Hamiltonians, mirroring \Cref{lem:hf-extg-nonincrease-ham};
notably, this holds without requiring the smoothness of \(f\).
As a consequence of this monotonicity property, the iterates generated by the implicit integrator also satisfies the same guarantee as in \cref{eqn:oracle_step_c_extg}; see the lemma below, proven in \Cref{subsub_apdx:one_step_HFimp}.
\begin{lemma}
\label{lem:hf-imp-convex-main}
Assume that \(f\) satisfies \emph{\convexassump{}}, and let \(\{(x_{n}, y_{n})\}_{n \geq 1}\) be the iterates of the implicit integrator \emph{(\ref{eq:implicit-integ})}
with any step size \(\eta > 0\), starting from \((x_{0}, \bm{0})\).
Then, for any \(N \geq 1\) and reference point \(z \in \bbR^{d}\)
\begin{equation}
\label{eqn:oracle_step_c_imp}
    f(x^{\mathrm{avg}}(x_{0}; N)) - f(z) + \frac{1}{3\eta^{2}N(N + 1)}\|x_{N} - z\|^{2} \leq \frac{2}{3}(f(x_{0}) - f(z)) + \frac{1}{3\eta^{2}N(N + 1)}\|x_{0} - z\|^{2}~.
\end{equation}
\end{lemma}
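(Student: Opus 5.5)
I will mirror the proof of \Cref{lem:hf-extg-convex-main}, substituting the implicit update for the extragradient one. Two ingredients are used throughout. The first is energy dissipation along \ref{eq:implicit-integ}: \Cref{lem:hf-imp-nonincrease-ham} applies because $H(x,y)=f(x)+\frac12\|y\|^2$ is convex, for any $\eta>0$. It gives $\frac12\|y_n\|^2\le f(x_0)-f(x_n)$ and $f(x_n)\le f(x_0)$ for every $n$, since $y_0=\bm{0}$. The second is convexity at the new point $x_{n+1}$, which is where the implicit scheme evaluates the gradient.

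\textbf{Step 1 (analogue of \Cref{lem:hf-extg-convex-hnterm}).} Set $h_n=\langle x_n-z,y_n\rangle$. Using $y_{n+1}-y_n=-\eta\nabla f(x_{n+1})$ and $x_{n+1}-x_n=\eta y_{n+1}$, expand
\[
h_{n+1}-h_n=\langle x_{n+1}-z,y_{n+1}-y_n\rangle+\langle x_{n+1}-x_n,y_n\rangle .
\]
The first term equals $-\eta\langle x_{n+1}-z,\nabla f(x_{n+1})\rangle$, which is at most $\eta(f(z)-f(x_{n+1}))$ by convexity. For the second term, write it as $\eta\langle y_{n+1},y_n\rangle=\eta\|y_{n+1}\|^2+\eta^2\langle y_{n+1},\nabla f(x_{n+1})\rangle$. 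Then $\eta^2\langle y_{n+1},\nabla f(x_{n+1})\rangle=\eta\langle x_{n+1}-x_n,\nabla f(x_{n+1})\rangle\le \eta(f(x_{n+1})-f(x_n))$, which is convexity in the ``upper'' direction at the new point. Combined with $\|y_{n+1}\|^2\le 2(f(x_0)-f(x_{n+1}))$, this gives
\[
h_{n+1}-h_n\le 2\eta(f(x_0)-f(z))-\eta(f(x_n)-f(z))-2\eta(f(x_{n+1})-f(z)),
\]
which is the same bound as in the extragradient case. Summing and using $f(x_N)\le f(x_0)$ then yields $h_N\le 2\eta N(f(x_0)-f(z))-3\eta\sum_{n=1}^N(f(x_n)-f(z))$.

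\textbf{Step 2 (distance increments).} Set $\Delta_n=\|x_{n+1}-z\|^2-\|x_n-z\|^2$. Since $x_{n+1}-x_n=\eta y_{n+1}$, we get $\Delta_n=2\eta h_{n+1}-\eta^2\|y_{n+1}\|^2\le 2\eta h_{n+1}$. This is cleaner than \cref{eq:deltan-bound}: it involves $h_{n+1}$ rather than $h_n$ and needs no smoothness correction. Plug in the Step 1 bound at index $n+1$, sum from $n=0$ to $N-1$, and exchange the double sum into the weights $(N-n+1)$. This gives
\[
\|x_N-z\|^2-\|x_0-z\|^2\le 2\eta^2N(N+1)(f(x_0)-f(z))-6\eta^2\sum_{n=1}^N(N-n+1)(f(x_n)-f(z)).
\]
Divide by $3\eta^2N(N+1)$ and apply Jensen's inequality with the normalized weights $\frac{2(N-n+1)}{N(N+1)}$ to reach \cref{eqn:oracle_step_c_imp}.

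\textbf{Main obstacle.} The delicate point is bounding the cross term $\langle x_{n+1}-x_n,y_n\rangle$ in Step 1. In the extragradient proof this term was handled through smoothness, which is unavailable here. If the convexity inequality at $x_{n+1}$ produces a sign issue, I would instead try the alternative split $\langle x_{n+1}-z,y_{n+1}\rangle-\langle x_n-z,y_n\rangle=\langle x_n-z,y_{n+1}-y_n\rangle+\eta\|y_{n+1}\|^2$. I would then check which version gives the $2$ and $-3$ constants, possibly carrying an extra nonpositive term $-\eta^2\|y_{n+1}\|^2$ that can simply be dropped.
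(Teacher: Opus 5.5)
Your Step~1 applies convexity in the wrong direction, exactly where you suspected a sign issue. Convexity at $x_{n+1}$ says $f(x_n)\ge f(x_{n+1})+\langle\nabla f(x_{n+1}),x_n-x_{n+1}\rangle$. Hence $\eta^2\langle y_{n+1},\nabla f(x_{n+1})\rangle=\eta\langle\nabla f(x_{n+1}),x_{n+1}-x_n\rangle\ge\eta\bigl(f(x_{n+1})-f(x_n)\bigr)$, which is a \emph{lower} bound. An upper bound on this quantity would need $\nabla f(x_n)$, which the implicit scheme never evaluates.

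Your claimed inequality already fails on the first step from rest. Take $f(x)=\frac12x^2$, $\eta=1$, $(x_0,y_0)=(-1,0)$. The implicit step gives $x_1=-\frac12$ and $y_1=\frac12$, so $\eta^2y_1f'(x_1)=-\frac14>-\frac38=\eta(f(x_1)-f(x_0))$.

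Your fallback split does not rescue this. To apply convexity at $x_{n+1}$ to $-\eta\langle x_n-z,\nabla f(x_{n+1})\rangle$, you must write $x_n-z=(x_{n+1}-z)-\eta y_{n+1}$. The same term $+\eta^2\langle y_{n+1},\nabla f(x_{n+1})\rangle$ then reappears. The inequality you want, $h_{n+1}-h_n\le 2\eta(f(x_0)-f(z))-\eta(f(x_n)-f(z))-2\eta(f(x_{n+1})-f(z))$, is true, but your argument does not establish it.

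The fix is to expand $\eta\langle y_{n+1},y_n\rangle$ around $y_n$, so that the cross term enters with a minus sign; this is what the paper does in proving \Cref{lem:hf-imp-convex-hnterm}. Using $y_{n+1}=y_n-\eta\nabla f(x_{n+1})$ and $\eta y_n=x_{n+1}-x_n+\eta^2\nabla f(x_{n+1})$, you get
\[
\eta\langle y_{n+1},y_n\rangle=\eta\|y_n\|^2-\eta^3\|\nabla f(x_{n+1})\|^2-\eta\langle\nabla f(x_{n+1}),x_{n+1}-x_n\rangle\le\eta\|y_n\|^2+\eta\bigl(f(x_n)-f(x_{n+1})\bigr).
\]
Then $\|y_n\|^2\le2(f(x_0)-f(x_n))$ yields $2\eta f(x_0)-\eta f(x_n)-\eta f(x_{n+1})$. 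Alternatively, $\langle y_{n+1},y_n\rangle\le\frac12\|y_n\|^2+\frac12\|y_{n+1}\|^2$, combined with energy dissipation at both indices, gives the same bound with no further use of convexity.

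With Step~1 repaired, your Step~2 is correct and is a cleaner route than the paper's. The exact identity $\Delta_n=2\eta h_{n+1}-\eta^2\|y_{n+1}\|^2$ is the discrete analogue of $\frac{\mathrm{d}}{\mathrm{d}t}\|X_t-z\|^2=2h(t)$. It lets you insert the $h$-bound at index $n+1$ directly, and the summation and Jensen step then go through as you wrote. The paper instead bounds $\Delta_n-2\eta h_n$ as in \cref{eq:deltan-bound-imp}, copying the extragradient proof. It then has to shift the index of the $h_n$ bound and use monotonicity to discard extra terms, including a telescoped $2\eta^2(f(x_N)-f(x_0))$.
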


Now, we state the analogues of \Cref{thm:hf-extg-strong-cvx} and \Cref{cor:hf-extg-strong-cvx} for \nameref{alg:disc_pphfopt}.

\begin{algorithm}[t]
\caption{Discretized Hamiltonian Flow for optimization with Aggregation\\using implicit integrator (\PDISCALG{})}
\algotitle{\PDISCALG{}}{alg:disc_pphfopt}
\SetKwInOut{Input}{Input}\SetKwInOut{Output}{Output}
\Input{Initialization: $x_0\in \R^d$, Number of iterations: $K \in \mathbb{N}$, parameter: $\lambda  \in \R_{\geq 0}$, step size $\eta > 0$, discretization steps sequence: $\{N_1, \dots, N_K\} \in \mathbb{N}^K$, oracle for implicit integrator \ref{eq:implicit-integ}}
\For{\(k = 1\) \KwTo \(K\)}{
    Obtain sequence \(\{x^{(k)}_{n}, y^{(k)}_{n}\}_{n = 1}^{N_{k}}\) using \ref{eq:implicit-integ} from \((x_{0}^{(k)}, y_{0}^{(k)}) = (x_{k- 1}, \bm{0})\).

    Compute \(x^{\mathrm{avg}}(x_{k - 1}; N_{k}) = \frac{2}{N_{k}(N_{k} + 1)}\sum_{n=1}^{N_{k}} (N_{k} - n + 1) x^{(k)}_{n}\).

    Set \(x_{k} = \frac{1}{\lambda + 1} x^{\mathrm{avg}}(x_{k - 1}; N_{k}) + \frac{\lambda}{\lambda + 1} x^{(k)}_{N_{k}}\).
    }
\Return $x_K$
\end{algorithm}

\begin{theorem}\label{thm:hf-imp-strong-cvx}
Assume that \(f\)  satisfies \emph{\convexassump{}} and \emph{\quadgrowassump{}}.
Suppose \(x_{K}\) is the output of running \emph{\nameref{alg:disc_pphfopt}} for \(K\) iterations with parameter \(\lambda = 0\), any step size \(\eta>0\), and discretization steps sequence satisfying \(N_{k} = N \geq \lceil \frac{c}{\eta\sqrt{\alpha}}\rceil\) for all \(k \in [K]\) and initialization \(x_{0} \in \bbR^{d}\).
Then,
\begin{equation*}
    f(x_{K}) - f(x^{\star}) \leq \left(\frac{2}{3} + \frac{2}{3c^{2}}\right)^{K} \cdot (f(x_{0}) - f(x^{\star}))~.
\end{equation*}
\end{theorem}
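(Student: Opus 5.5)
The argument copies the proof of \Cref{thm:hf-extg-strong-cvx}, with \Cref{lem:hf-imp-convex-main} used in place of \Cref{lem:hf-extg-convex-main}. Since \(\lambda = 0\), each outer iterate of \nameref{alg:disc_pphfopt} is exactly the weighted average, \(x_{k} = x^{\mathrm{avg}}(x_{k-1}; N_{k})\), computed from implicit-integrator iterates started at \((x_{k-1}, \bm{0})\). I will prove the one-step contraction
\[
f(x_{k}) - f(\xstar) \leq \left(\tfrac{2}{3} + \tfrac{2}{3c^{2}}\right)(f(x_{k-1}) - f(\xstar))
\]
and then iterate it \(K\) times.

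For one iteration, I apply \Cref{lem:hf-imp-convex-main} with initial point \(x_{k-1}\), reference point \(z = \xstar\), and \(N = N_{k}\). The lemma holds for any \(\eta > 0\) and needs only convexity. I drop the nonnegative term \(\frac{1}{3\eta^{2}N(N+1)}\|x_{N}-\xstar\|^{2}\) on the left. This leaves
\[
f(x_{k}) - f(\xstar) \leq \tfrac{2}{3}(f(x_{k-1}) - f(\xstar)) + \tfrac{1}{3\eta^{2}N_{k}(N_{k}+1)}\|x_{k-1}-\xstar\|^{2}.
\]
Next I use \quadgrowassump{} in the form \(\|x_{k-1}-\xstar\|^{2} \leq \frac{2}{\alpha}(f(x_{k-1}) - f(\xstar))\). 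With it, the second coefficient becomes \(\frac{2}{3\eta^{2}\alpha N_{k}(N_{k}+1)}\).

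Then I bound \(N_{k}(N_{k}+1)\) from below. We have \(N_{k}(N_{k}+1) \geq N_{k}^{2}\), and \(N_{k}^{2} \geq \frac{c^{2}}{\eta^{2}\alpha}\) because \(N_{k} \geq \lceil \frac{c}{\eta\sqrt{\alpha}} \rceil\). So the coefficient is at most \(\frac{2}{3c^{2}}\), which gives the one-step contraction. Iterating from \(k=1\) to \(K\) yields the stated bound.

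The main work is inside \Cref{lem:hf-imp-convex-main}, and I take that lemma as given. There is no step-size restriction, because the implicit integrator dissipates the Hamiltonian for every \(\eta\) (\Cref{lem:hf-imp-nonincrease-ham}). The remaining steps are routine algebra, so I expect no real obstacle.
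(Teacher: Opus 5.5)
Your proposal is correct and matches the paper's proof step for step. Both apply \Cref{lem:hf-imp-convex-main} with \(z = \xstar\) and drop the nonnegative \(\|x_{N}-\xstar\|^{2}\) term, use quadratic growth to convert \(\|x_{k-1}-\xstar\|^{2}\) into the optimality gap, bound \(\eta^{2}\alpha N_{k}(N_{k}+1) \geq c^{2}\) from the choice of \(N_{k}\) (implicitly taking \(c>0\), as the paper does), and iterate the one-step contraction \(K\) times.
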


\begin{proof}
    Recall that \(x_{k}\) produced by \nameref{alg:disc_pphfopt} satisfies \(x_{k} = x^{\mathrm{avg}}(x_{k - 1}; N_{k})\).
From \Cref{lem:hf-imp-convex-main},
\begin{align*}
    f(x_{k}) - f(x^{\star}) &= f(x^{\mathrm{avg}}(x_{k - 1}; N_{k})) - f(x^{\star}) \\
    &\leq \frac{2}{3}(f(x_{k - 1}) - f(x^{\star})) + \frac{1}{3\eta^{2}N_{k}(N_{k} + 1)}\|x_{k - 1} - x^{\star}\|^{2} \\
    &\overset{(a)}\leq \left(\frac{2}{3} + \frac{2}{3\eta^{2}\alpha N_{k}(N_{k} + 1)}\right) (f(x_{k - 1}) - f(x^{\star})) \\
    &\overset{(b)}\leq \left(\frac{2}{3} + \frac{2}{3c^{2}}\right) \cdot (f(x_{k - 1}) - f(\xstar))~.
\end{align*}
In step \((a)\), we use the fact that \(f\) satisfies \quadgrowassump{}, and in step \((b)\) we use the condition that \(N_{k} \geq \lceil \frac{c}{\eta\sqrt{\alpha}}\rceil \geq \frac{c}{\eta\sqrt{\alpha}}\).
Iterating this inequality,
\begin{equation*}
    f(x_{K}) - f(\xstar) \leq \left(\frac{2}{3} + \frac{2}{3c^{2}}\right)^{K} \cdot (f(x_{0}) - f(\xstar))~.
\end{equation*}
\end{proof}

\begin{corollary}
\label{cor:hf-imp-strong-cvx}
Assume that \(f\)  satisfies \emph{\convexassump{}} and \emph{\quadgrowassump{}}.
To obtain \(x_{K}\) that is a \(\varepsilon\)-accurate minimizer, it suffices to run \nameref{alg:disc_pphfopt} with parameter \(\lambda = 0\) and any step size \(\eta > 0\) for \(K\) iterations and discretization step sequence \(\{N_{k}\}_{k=1}^{K}\) with
\begin{equation*}
    K = \left\lceil 4 \log \frac{f(x_{0}) - f(x^{\star})}{\varepsilon} \right\rceil~; \quad N_{k} = \left\lceil \frac{5}{2\eta \sqrt{\alpha}} \right\rceil~\forall~k \in [K]~.
\end{equation*}
The total number of calls to the implicit discretization oracle is 
$$N_{\mathrm{tot}} = \left\lceil \frac{5}{2\eta \sqrt{\alpha}} \right\rceil \cdot \left\lceil 4 \log \frac{f(x_{0}) - f(x^{\star})}{\varepsilon} \right\rceil~.$$
\end{corollary}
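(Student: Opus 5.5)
This follows the same route as the proof of \Cref{cor:hf-extg-strong-cvx}, with \Cref{thm:hf-imp-strong-cvx} playing the role of \Cref{thm:hf-extg-strong-cvx}. First I check that the prescribed $N_k = \lceil \frac{5}{2\eta\sqrt{\alpha}}\rceil$ meets the hypothesis of \Cref{thm:hf-imp-strong-cvx} with $c = \frac{5}{2}$. This holds because $N_k \geq \frac{5}{2\eta\sqrt{\alpha}} = \lceil \frac{c}{\eta\sqrt\alpha}\rceil$ up to the ceiling, and the ceiling only helps. Indeed, the proof of that theorem only uses $N_k(N_k+1)\eta^2\alpha \geq c^2$, which follows from $N_k \geq c/(\eta\sqrt\alpha)$. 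Applying \Cref{thm:hf-imp-strong-cvx} with any $\eta>0$ (no smoothness is needed, since \Cref{lem:hf-imp-convex-main} holds for all step sizes) then gives
\[
f(x_K)-f(\xstar) \leq \left(\tfrac{2}{3}+\tfrac{2}{3}\cdot\tfrac{4}{25}\right)^{K}(f(x_0)-f(\xstar)) = \left(\tfrac{58}{75}\right)^{K}(f(x_0)-f(\xstar)).
\]

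Next I show that the chosen $K$ makes this at most $\varepsilon$. It suffices that $K \geq (\log\frac{75}{58})^{-1}\log\frac{f(x_0)-f(\xstar)}{\varepsilon}$. Since $\log\frac{75}{58} \approx 0.257 \geq \frac14$, we have $(\log\frac{75}{58})^{-1}\leq 4$, so $K=\lceil 4\log\frac{f(x_0)-f(\xstar)}{\varepsilon}\rceil$ suffices. If $f(x_0)-f(\xstar)\le\varepsilon$, the claim is trivial for any $K\ge 0$.

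Finally, the oracle count: each step of \ref{eq:implicit-integ} uses exactly one call to the implicit oracle (one proximal step, from which the accompanying gradient $\nabla f(x_{n+1})$ is recovered as $(x_n+\eta y_n - x_{n+1})/\eta^2$). Each outer iteration therefore costs $N_k$ calls, and summing over the $K$ outer iterations gives $N_{\mathrm{tot}} = K\cdot\lceil\frac{5}{2\eta\sqrt\alpha}\rceil$. The only step requiring care is the numerical comparison $\log\frac{75}{58}\ge\frac14$, equivalently $\frac{75}{58}\geq e^{1/4}\approx1.284$; this holds since $\frac{75}{58}\approx1.293$. Everything else is direct substitution.
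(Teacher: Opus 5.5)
Your proposal is correct and follows the paper's proof: you invoke \Cref{thm:hf-imp-strong-cvx} with $c = \frac{5}{2}$ to get the factor $\left(\frac{58}{75}\right)^{K}$, use $\log\frac{75}{58} \geq \frac{1}{4}$ to show the prescribed $K$ suffices, and count one implicit-oracle call per integrator step. One small tidy-up: for $c = \frac{5}{2}$ the prescribed $N_k$ equals $\lceil c/(\eta\sqrt{\alpha})\rceil$ exactly, so the theorem's hypothesis holds with equality and the ``up to the ceiling'' caveat is unnecessary.
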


\begin{proof}
Since \(N_{k} = \lceil \frac{5}{2\eta \sqrt{\alpha}} \rceil \geq \frac{5}{2\eta \sqrt{\alpha}}\), 
from \Cref{thm:hf-imp-strong-cvx} we have
\begin{equation*}
    f(x_{K}) - f(x^{\star}) \leq \left(\frac{58}{75}\right)^{K} \cdot (f(x_{0}) - f(x^{\star}))~.
\end{equation*}
Since \(K = \left\lceil 4\log \frac{f(x_{0}) - f(x^{\star})}{\varepsilon}\right\rceil \geq \left(\log \frac{75}{58}\right)^{-1}\frac{f(x_{0}) - f(x^{\star})}{\varepsilon}\), 
\begin{equation*}
    f(x_{K}) - f(x^{\star}) \leq \varepsilon~.
\end{equation*}
Therefore, the total number of calls to the implicit discretization oracle is
\begin{equation*}
    \left\lceil \frac{5}{2\eta \sqrt{\alpha}} \right\rceil \cdot \left\lceil 4 \log \frac{f(x_{0}) - f(x^{\star})}{\varepsilon} \right\rceil~.
\end{equation*}    
\end{proof}

To obtain the result for minimizing convex functions, we first state the analogous version of \Cref{lem:hf-extg-convex-mix-main} for \ref{eq:implicit-integ}. A proof is provided in \Cref{subsub_apdx:multi_step_HFimp}.
\begin{lemma}
\label{lem:hf-imp-convex-mix-main}
Assume that \(f\) satisfies \emph{\convexassump{}}.
Let \(\{(x_{n}, y_{n})\}_{n\geq 1}\) be generated by the implicit integrator with step size \(\eta > 0\).
For any \(N \geq 4\) and initial position \(x_{0}\), define
\begin{equation*}
    x^{\mathrm{mix}}(x_{0}; N) = \frac{\lambda}{\lambda + 1} x_{N} + \frac{1}{1 + \lambda} x^{\mathrm{avg}}(x_{0}; N)~.
\end{equation*}
Then, defining \(c_{\lambda} = \frac{(6\lambda^{2} + 4\lambda + 1)}{6\lambda(\lambda + 1)}\), we have
\[
f(x^{\mathrm{mix}}(x_{0}; N)) - f(x^{\star}) + \frac{\|x^{\mathrm{mix}}(x_{0}; N) - x^{\star}\|^{2}}{3\lambda \eta^{2}N(N + 1)} \leq c_{\lambda}(f(x_{0}) - f(x^{\star})) + \frac{\|x_{0} - x^{\star}\|^{2}}{3\lambda\eta^{2}N(N + 1)}~.
\]
\end{lemma}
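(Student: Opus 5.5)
The plan is to mirror the proof of \Cref{lem:hf-extg-convex-mix-main} exactly, replacing each extragradient ingredient by its implicit-integrator counterpart. We need three inequalities, to be combined with weights \(\lambda, 1, \lambda^{2}\). The first is \cref{eqn:oracle_step_c_imp} from \Cref{lem:hf-imp-convex-main} with \(z = \xstar\). The third, \(f(x_{N}) - f(\xstar) \leq f(x_{0}) - f(\xstar)\), follows from the monotonicity of the Hamiltonian under the implicit integrator (\Cref{lem:hf-imp-nonincrease-ham}; here \(H(x,y) = f(x) + \frac12\|y\|^2\) is convex) together with \(y_{0} = \bm{0}\). So the only new work is the analogue of \cref{eq:cor:hf-extg-convex-main-2}:
\begin{equation*}
\frac{1}{3\eta^{2}N(N+1)}\|x^{\mathrm{avg}}(x_0;N) - \xstar\|^2 \leq \frac{1}{3\eta^{2}N(N+1)}\|x_0 - \xstar\|^2 + \frac16 (f(x_0) - f(\xstar)).
\end{equation*}

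To get it, let \(h_n = \langle x_n - \xstar, y_n\rangle\) and \(\Delta_n = \|x_{n+1}-\xstar\|^2 - \|x_n - \xstar\|^2\). Using \(x_{n+1} - x_n = \eta y_{n+1}\), write
\begin{equation*}
\Delta_n = 2\eta\langle x_{n+1}-\xstar, y_{n+1}\rangle - \eta^2\|y_{n+1}\|^2 \leq 2\eta h_{n+1}.
\end{equation*}
Next bound \(h_{n+1} - h_n\) as in the proof of \Cref{lem:hf-extg-convex-hnterm}:
\begin{equation*}
h_{n+1} - h_n = -\eta\langle x_{n+1}-\xstar, \nabla f(x_{n+1})\rangle + \langle x_{n+1}-x_n, y_n\rangle.
\end{equation*}
The first term is \(\leq 0\) by convexity. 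For the second, \(\langle \eta y_{n+1}, y_n\rangle \leq \frac{\eta}{2}(\|y_{n+1}\|^2 + \|y_n\|^2) \leq 2\eta(f(x_0) - f(\xstar))\) by Hamiltonian monotonicity. This gives \(h_{n} \leq 2\eta n (f(x_0)-f(\xstar))\). Presumably \Cref{lem:hf-imp-convex-main}'s proof already contains a sharper version of this, which I would reuse.

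It follows that \(\Delta_n \leq 4\eta^2(n+1)(f(x_0)-f(\xstar))\). Summing gives \(\|x_n - \xstar\|^2 - \|x_0-\xstar\|^2 \leq 2\eta^2 n(n+1)(f(x_0)-f(\xstar))\). This is slightly worse than the extragradient bound \(2\eta^2 n^2\), so it must be checked against the constant \(\frac16\). Weighting by \(N-n+1\) and using \(\sum_{n}(N-n+1)n(n+1) = N(N+1)(N+2)(N+3)/12\) yields the bound \(\frac{\eta^2(N+2)(N+3)}{6}\) after the same Jensen step. I would then need \((N+2)(N+3) \leq \frac32 N(N+1)\), which fails at \(N=4\).

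This constant is the main obstacle. I would tighten \(\Delta_n\) by keeping the \(-\eta^2\|y_{n+1}\|^2\) term and using the sharper cross-term estimate \(\langle y_{n+1}, y_n\rangle \leq \|y_n\|^2 - \eta\langle \nabla f(x_{n+1}), y_n\rangle\). Alternatively, I would use the convexity term \(-\eta(f(x_{n+1}) - f(\xstar))\) to absorb the extra \(n\) in the same way \Cref{lem:hf-extg-convex-hnterm} absorbs it, recovering \(2\eta^2 n^2\) exactly as in the extragradient case. Once the middle inequality is in hand, the final combination is verbatim the one in \Cref{lem:hf-extg-convex-mix-main}. Jensen on both the function and the squared norm turns the left side into \(\lambda(1+\lambda)\) times the mixed quantity. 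The right side becomes \((\lambda^2 + \frac{2\lambda}{3} + \frac16)(f(x_0)-f(\xstar))\) plus the distance term, and dividing by \(\lambda(1+\lambda)\) gives \(c_\lambda\).
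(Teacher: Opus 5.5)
Your outer structure matches the paper: the three inequalities with weights \(\lambda,1,\lambda^2\), Jensen on both terms, and division by \(\lambda(1+\lambda)\). The gap is the one new ingredient, the middle inequality \(\frac{1}{3\eta^2N(N+1)}\|x^{\mathrm{avg}}(x_0;N)-x^\star\|^2\le \frac{1}{3\eta^2N(N+1)}\|x_0-x^\star\|^2+\frac16(f(x_0)-f(x^\star))\), which you do not actually prove.

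The route you carry out is \(\Delta_n\le 2\eta h_{n+1}\) together with \(h_{n+1}\le 2\eta(n+1)(f(x_0)-f(x^\star))\). It gives only \(\|x_n-x^\star\|^2-\|x_0-x^\star\|^2\le 2\eta^2 n(n+1)(f(x_0)-f(x^\star))\). As you note, this yields the constant \(\frac16\) only for \(N\ge 9\), so the cases \(4\le N\le 8\) remain open, and neither repair is carried out.

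Reusing \Cref{lem:hf-imp-convex-hnterm} would not help by itself, because its leading term is the same \(2\eta n(f(x_0)-f(x^\star))\). Absorbing the loss with the convexity terms \(-\eta(f(x_k)-f(x^\star))\) cannot work either. These gaps at the iterates have no lower bound in terms of \(f(x_0)-f(x^\star)\): on a trajectory coasting through a nearly flat valley they are essentially zero, while your deficit is \(2\eta^2(f(x_0)-f(x^\star))\) per step.

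The paper's fix is in the \(\Delta_n\) step, not the \(h_n\) bound. Instead of discarding \(-\eta^2\|y_{n+1}\|^2\), cancel it against the last increment of \(h\). Use \(h_{n+1}=h_n-\eta\langle x_{n+1}-x^\star,\nabla f(x_{n+1})\rangle+\eta\langle y_{n+1},y_n\rangle\) together with \(2\langle y_{n+1},y_n\rangle-\|y_{n+1}\|^2=\|y_n\|^2-\eta^2\|\nabla f(x_{n+1})\|^2\). This gives the exact identity
\[
\Delta_n=2\eta h_n+\eta^2\|y_n\|^2-2\eta^2\langle\nabla f(x_{n+1}),x_{n+1}-x^\star\rangle-\eta^4\|\nabla f(x_{n+1})\|^2 ,
\]
which is \eqref{eq:deltan-bound-imp} before its convexity step. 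Combining it with \(h_n\le 2\eta n(f(x_0)-f(x^\star))\) and \(\|y_n\|^2\le 2(f(x_0)-f(x^\star))\) gives \(\Delta_n\le 2\eta^2(2n+1)(f(x_0)-f(x^\star))\). Summing, \(\|x_n-x^\star\|^2-\|x_0-x^\star\|^2\le 2\eta^2n^2(f(x_0)-f(x^\star))\). Weighting with \(\sum_{n=1}^N(N-n+1)n^2=\frac{N(N+1)^2(N+2)}{12}\) produces \(\frac{\eta^2(N+1)(N+2)}{6}\le\frac{\eta^2N(N+1)}{4}\), which holds exactly when \(N\ge 4\).

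Your own route can also be rescued more cheaply. The first increment is \(h_1-h_0=-\eta\langle x_1-x^\star,\nabla f(x_1)\rangle+\eta\langle y_1,y_0\rangle\le 0\), because \(y_0=\bm{0}\). Hence \(h_{n+1}\le 2\eta n(f(x_0)-f(x^\star))\), and \(\Delta_n\le 2\eta h_{n+1}\) sums to \(2\eta^2 n(n-1)(f(x_0)-f(x^\star))\), which suffices for every \(N\ge 1\).

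Your first suggested repair points toward the paper's identity. As written, however, the proposal stops before establishing it.
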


Since the conclusion of the \Cref{lem:hf-imp-convex-mix-main} is the same as the conclusion of \Cref{lem:hf-extg-convex-mix-main}, and \nameref{alg:disc_pphfopt} is algorithmically the same as \nameref{alg:disc_phfopt} with just \ref{eq:implicit-integ} in lieu of \ref{eqn:extg-update}, we have the following theorem and corollary for minimizing convex \(f\), analogous to \Cref{thm:hf-extg-cvx,} and \Cref{cor:hf-extg-cvx}. We state them without proof, as they can be derived from \Cref{lem:hf-imp-convex-mix-main} arguments similar to that in \Cref{app:prf:hf-extg-cvx}.

\begin{theorem}
\label{thm:hf-imp-cvx}
Assume that \(f\) satisfies \emph{\convexassump{}}.
Suppose \(x_{K}\) is the output of running \emph{\nameref{alg:disc_pphfopt}} for \(K\) iterations with parameter \(\lambda = \frac{\sqrt{3}+1}{2}\), any step size \(\eta>0\), and discretization steps sequence \(\{N_{k}\}_{k=1}^{K}\) satisfying \(N_{k}(N_{k} + 1) \geq \frac{3}{\sqrt{3}+1}N_{k - 1}(N_{k - 1} + 1)\) for all \(k \in [K]\) with \(N_{0} = 4\).
Then,
\begin{equation*}
    f(x_{K}) - f(x^{\star}) \leq \lp \frac{\sqrt{3}+1}{3}\rp^{K}\left(f(x_{0}) - f(x^{\star}) + \frac{\sqrt{3}-1}{60\eta^{2}}\|x_{0} - x^{\star}\|^{2}\right)~.
\end{equation*}
\end{theorem}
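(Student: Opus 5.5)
The plan is to copy the proof of \Cref{thm:hf-extg-cvx} line by line, replacing \Cref{lem:hf-extg-convex-mix-main} with its implicit-integrator analogue \Cref{lem:hf-imp-convex-mix-main}. That lemma holds for any step size \(\eta > 0\) and needs only \convexassump{}. Fix \(\lambda^{\star} = \frac{\sqrt{3}+1}{2}\). For this choice,
\[
c_{\lambda^{\star}} = \frac{6\lambda^{\star 2} + 4\lambda^{\star} + 1}{6\lambda^{\star}(\lambda^{\star}+1)} = \frac{\sqrt{3}+1}{3},
\]
which we verify by direct substitution: \(\lambda^{\star 2} = \frac{2+\sqrt{3}}{2}\) and \(\lambda^{\star}(\lambda^{\star}+1) = \frac{3+2\sqrt{3}}{2}\). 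Since each outer iterate of \nameref{alg:disc_pphfopt} with this \(\lambda\) satisfies \(x_{k} = x^{\mathrm{mix}}(x_{k-1}; N_{k})\), we can apply \Cref{lem:hf-imp-convex-mix-main} with \(x_0 \leftarrow x_{k-1}\) and \(N \leftarrow N_k\).

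For this application we need \(N_k \geq 4\). This follows from \(N_0 = 4\) and the growth condition \(N_{k}(N_{k}+1) \geq \frac{3}{\sqrt{3}+1} N_{k-1}(N_{k-1}+1) > N_{k-1}(N_{k-1}+1)\), which forces the sequence to be nondecreasing, so by induction \(N_k \geq 4\). Define the potential
\[
\Phi_k := f(x_k) - f(\xstar) + \frac{\|x_k - \xstar\|^2}{3\lambda^{\star}\eta^2 N_k(N_k+1)}.
\]
The lemma gives
\[
\Phi_k \leq c_{\lambda^{\star}}(f(x_{k-1}) - f(\xstar)) + \frac{\|x_{k-1}-\xstar\|^2}{3\lambda^{\star}\eta^2 N_k(N_k+1)}.
\]
Factor out \(c_{\lambda^{\star}}\), and use \(N_k(N_k+1) \geq c_{\lambda^{\star}}^{-1} N_{k-1}(N_{k-1}+1)\) to bound the second term by \(c_{\lambda^{\star}}\) times the distance term of \(\Phi_{k-1}\). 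This yields \(\Phi_k \leq c_{\lambda^{\star}} \Phi_{k-1}\).

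Iterating from \(k = 1\) to \(K\) and dropping the nonnegative distance term in \(\Phi_K\) gives
\[
f(x_K) - f(\xstar) \leq c_{\lambda^{\star}}^K \Phi_0.
\]
Here \(\Phi_0\) uses \(N_0 = 4\), so its distance coefficient is
\[
\frac{1}{3\lambda^{\star}\eta^2 \cdot 20} = \frac{1}{30(\sqrt{3}+1)\eta^2} = \frac{\sqrt{3}-1}{60\eta^2}.
\]
This matches the stated constant.

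There is no genuine obstacle in this argument. The only points needing care are the arithmetic for \(c_{\lambda^{\star}}\) and \(\Phi_0\), and the induction verifying \(N_k \geq 4\). Note also that, unlike the extragradient case, the argument imposes no restriction on \(\eta\) and no smoothness of \(f\), because \Cref{lem:hf-imp-convex-mix-main} requires neither.
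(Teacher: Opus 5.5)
Your proposal is correct and follows the paper's route: the paper gives no separate proof of this theorem and refers to the proof of \Cref{thm:hf-extg-cvx} with \Cref{lem:hf-imp-convex-mix-main} substituted, which is exactly your recursion $\Phi_k \le c_{\lambda^{\star}}\Phi_{k-1}$, and your arithmetic for $c_{\lambda^{\star}}$ and the $\Phi_0$ constant checks out. Your induction showing $N_k \ge 4$, which the lemma requires, is a step the paper leaves implicit.
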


\begin{corollary}
\label{cor:hf-imp-cvx}
Assume that \(f\) satisfies \emph{\convexassump{}}.
To obtain \(x_{K}\) that is a \(\varepsilon\)-accurate minimizer, it suffices to run \nameref{alg:disc_pphfopt} with parameter \(\lambda = \frac{\sqrt{3}+1}{2}\) and any step size $\eta > 0$ for \(K\) iterations and discretization step sequence \(\{N_{k}\}_{k=1}^{K}\) such that
\begin{align*}
    K &= \left\lceil \left(\log\left(\frac{3}{1+\sqrt{3}}\right)\right)^{-1} \cdot \log \left(\frac{f(x_{0}) - f(x^{\star}) + \frac{\sqrt{3}-1}{60\eta^{2}}\|x_{0} - x^{\star}\|^{2}}{\varepsilon}\right) \right\rceil~, \\
    N_k &=\left\lceil \sqrt{\frac{3}{1+\sqrt{3}}}N_{k-1}+\tfrac{1}{2} \right\rceil; \quad N_0=4.
\end{align*}
The total number of numerical integration steps satisfies $$N_{\mathrm{tot}}\leq 810\sqrt{\frac{f(x_{0}) - f(x^{\star}) + \frac{\sqrt{3}-1}{60\eta^{2}}\|x_{0} - x^{\star}\|^{2}}{\varepsilon}}.$$
\end{corollary}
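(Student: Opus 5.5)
The plan is to derive the corollary directly from \Cref{thm:hf-imp-cvx}, mirroring the proof of \Cref{cor:hf-extg-cvx}. Write $q := \sqrt{3/(1+\sqrt{3})}$, so that $q^{2} = 1/c_{\lambda^{\star}}$ with $\lambda^{\star} = \frac{\sqrt{3}+1}{2}$ and $c_{\lambda^{\star}} = \frac{\sqrt3+1}{3}$. Also write $D := f(x_{0}) - f(x^{\star}) + \frac{\sqrt{3}-1}{60\eta^{2}}\|x_{0} - x^{\star}\|^{2}$. The argument has three steps: verify the hypothesis of the theorem, verify accuracy, and count oracle calls.

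\emph{Step 1 (growth condition).} I will check that $N_{k} = \lceil qN_{k-1} + \tfrac12\rceil$ satisfies $N_{k}(N_{k}+1) \geq q^{2}N_{k-1}(N_{k-1}+1)$. Since $N_{k} \geq qN_{k-1} + \tfrac12$, we get
\[
N_{k}(N_{k}+1) \geq \left(qN_{k-1}+\tfrac12\right)\left(qN_{k-1}+\tfrac32\right) = q^{2}N_{k-1}^{2} + 2qN_{k-1} + \tfrac34 .
\]
This is at least $q^{2}(N_{k-1}^{2}+N_{k-1})$ because $q \leq 2$. \Cref{thm:hf-imp-cvx} then applies with $N_{0}=4$, any $\eta>0$, and $\lambda = \lambda^{\star}$. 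It gives $f(x_{K}) - f(x^{\star}) \leq c_{\lambda^{\star}}^{K} D$.

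\emph{Step 2 (accuracy).} The stated $K$ satisfies $K \geq \log(D/\varepsilon)/\log(1/c_{\lambda^{\star}})$. Hence $c_{\lambda^{\star}}^{K}D \leq \varepsilon$, so $x_{K}$ is $\varepsilon$-accurate.

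\emph{Step 3 (counting calls).} Each step of \ref{eq:implicit-integ} is one oracle call, so $N_{\mathrm{tot}} = \sum_{k=1}^{K} N_{k}$. There is no factor of $2$ here, unlike the extragradient case. From $N_{k} \leq qN_{k-1} + \tfrac32$, unrolling gives
\[
N_{k} \leq q^{k}N_{0} + \tfrac32\,\frac{q^{k}-1}{q-1} \leq q^{k}\Big(N_{0} + \tfrac{3}{2(q-1)}\Big).
\]
Summing this geometric series yields
\[
\sum_{k=1}^{K} N_{k} \leq \frac{q}{q-1}\Big(N_{0} + \tfrac{3}{2(q-1)}\Big)\, q^{K}.
\]
Because $K$ is a ceiling, $K \leq \log(D/\varepsilon)/\log q^{2} + 1$. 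Therefore $q^{K} \leq q\sqrt{D/\varepsilon}$, and
\[
N_{\mathrm{tot}} \leq \Big(\frac{N_{0}q^{2}}{q-1} + \frac{3q^{2}}{2(q-1)^{2}}\Big)\sqrt{D/\varepsilon}.
\]

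The only delicate point is the numerical constant, because $q - 1 \approx 0.048$ is small and the $(q-1)^{-2}$ term dominates. With $q^{2} \approx 1.098$, the first term is $N_{0}q^{2}/(q-1) \approx 92$ and the second is $3q^{2}/(2(q-1)^{2}) \approx 718$. Their sum is about $810$. I will verify this with careful rational upper bounds on $q^{2}$ and lower bounds on $q-1$, exactly as the paper does for the constant in \Cref{cor:hf-extg-cvx}. This gives $N_{\mathrm{tot}} \leq 810\sqrt{D/\varepsilon}$.
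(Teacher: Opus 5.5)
Your proposal is correct and is essentially the paper's argument. The paper states this corollary without proof and defers to the proof of \Cref{cor:hf-extg-cvx}, which consists of exactly your steps, minus the factor of $2$: the ceiling-based check of $N_k(N_k+1)\geq q^2N_{k-1}(N_{k-1}+1)$ using $q\leq 2$, the choice of $K$ from \Cref{thm:hf-imp-cvx}, and the unrolled bound $N_k\leq q^k\bigl(N_0+\frac{3}{2(q-1)}\bigr)$ together with $q^K\leq q\sqrt{D/\varepsilon}$.

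One caveat: the constant is tight, since $\frac{4q^2}{q-1}+\frac{3q^2}{2(q-1)^2}\approx 91.71+718.14\approx 809.86$. The rational bounds on $\sqrt{3}$ you plug in therefore need about six correct digits for the final comparison with $810$ to go through.
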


\subsubsection{Proof of \texorpdfstring{\Cref{lem:hf-imp-convex-main}}{Lemma 16}}\label{subsub_apdx:one_step_HFimp}

Similar to the proof of \Cref{lem:hf-extg-convex-main}, we first begin with following helper lemma whose proof is provided in \Cref{subsubapdx:helper_lemma_proof}.

\begin{lemma}
\label{lem:hf-imp-convex-hnterm}
    Assume that \(f\) satisfies \emph{\convexassump{}}. Let $\{(x_n, y_n)\}_{n\geq1}$ be generated by the implicit integrator \emph{(\ref{eq:implicit-integ})} from any initial $x_0\in \R^d$ and $y_0 =\bm{0}$. Then, for any $N\geq 1$, the following holds:
    $$
    \langle x_N-z, y_N \rangle \leq 2\eta N (f(x_0)-f(z))-3\eta \sum_{n=1}^N(f(x_n)-f(z)).
    $$
\end{lemma}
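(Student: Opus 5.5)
The plan is to mirror the proof of \Cref{lem:hf-extg-convex-hnterm}: bound the one-step increment of $h_{n} := \langle x_{n} - z, y_{n}\rangle$ and then telescope. For the implicit integrator we have $y_{n+1} - y_{n} = -\eta \nabla f(x_{n+1})$ and $x_{n+1} - x_{n} = \eta y_{n+1}$. It is natural to split the increment as
\begin{equation*}
h_{n+1} - h_{n} = \langle x_{n+1} - z, y_{n+1} - y_{n}\rangle + \langle x_{n+1} - x_{n}, y_{n}\rangle = -\eta\langle x_{n+1} - z, \nabla f(x_{n+1})\rangle + \eta\langle y_{n+1}, y_{n}\rangle~.
\end{equation*}
Convexity of $f$ bounds the first term by $\eta(f(z) - f(x_{n+1}))$.

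For the second term, use $\langle y_{n+1}, y_{n}\rangle \le \frac12\|y_{n+1}\|^{2} + \frac12\|y_{n}\|^{2}$. By \Cref{lem:hf-imp-nonincrease-ham}, the Hamiltonian $H(x,y) = f(x) + \frac12\|y\|^2$ is convex, so the Hamiltonian values are nonincreasing along the iterates. Starting from $y_{0} = \bm{0}$, this gives $\frac12\|y_{m}\|^{2} \le f(x_{0}) - f(x_{m})$ for every $m$. Hence
\begin{equation*}
\eta\langle y_{n+1}, y_{n}\rangle \le \eta(f(x_{0}) - f(x_{n})) + \eta(f(x_{0}) - f(x_{n+1}))~.
\end{equation*}
This is the same estimate $2\eta f(x_0) - \eta f(x_n) - \eta f(x_{n+1})$ obtained in the extragradient case. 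Here it is even simpler, since no smoothness is needed. Combining the two bounds,
\begin{equation*}
h_{n+1} - h_{n} \le 2\eta(f(x_{0}) - f(z)) - \eta(f(x_{n}) - f(z)) - 2\eta(f(x_{n+1}) - f(z))~.
\end{equation*}

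Finally, sum from $n=0$ to $N-1$ using $h_{0}=0$. The interior terms $f(x_n)$, $1\le n\le N-1$, each pick up coefficient $-3\eta$. The boundary terms leave $-\eta(f(x_{0}) - f(z)) - 2\eta(f(x_{N}) - f(z))$. Since $f(x_{N}) \le f(x_{0})$ by the Hamiltonian monotonicity, we have $-\eta(f(x_0)-f(z)) \le -\eta(f(x_N)-f(z))$. This converts the boundary terms into $-3\eta(f(x_N)-f(z))$ and yields the stated bound.

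The only real point to check is the bound on $\langle y_{n+1}, y_{n}\rangle$. Unlike the extragradient case, the cross term involves $y_{n+1}$ rather than a gradient step, and the AM--GM split handles it cleanly. Everything else is routine telescoping.
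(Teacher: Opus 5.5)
Your proof is correct and follows the paper's argument. It uses the same split of $h_{n+1}-h_n$, the same convexity bound $-\eta\langle x_{n+1}-z,\nabla f(x_{n+1})\rangle \le \eta(f(z)-f(x_{n+1}))$, and the same telescoping with the boundary term absorbed via $f(x_N)\le f(x_0)$. The one local difference is the cross term $\eta\langle y_{n+1},y_n\rangle$. You bound it by Young's inequality plus the energy bound $\tfrac12\|y_m\|^2\le f(x_0)-f(x_m)$ at both $m=n$ and $m=n+1$. The paper instead substitutes $y_{n+1}=y_n-\eta\nabla f(x_{n+1})$, rewrites $\eta y_n$ through the implicit relation, drops $-\eta^3\|\nabla f(x_{n+1})\|^2$, and uses convexity between $x_n$ and $x_{n+1}$ with the energy bound at $m=n$ only. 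Both routes give exactly $2\eta f(x_0)-\eta f(x_n)-\eta f(x_{n+1})$.
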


\begin{proofof}{\Cref{lem:hf-imp-convex-main}}
We adopt the strategy to prove \Cref{lem:hf-extg-convex-main} and consider the difference \(\Delta_{n} := \|x_{n + 1} - z\|^{2} - \|x_{n} - z\|^{2}\).
By definition of \ref{eq:implicit-integ}, \(x_{n + 1}, y_{n + 1}\) from \(x_{n}, y_{n}\)
\begin{align*}
\Delta_{n} - 2\eta h_{n} &= \|x_{n + 1} - z\|^{2} - \|x_{n} - z\|^{2} - 2\eta \langle x_{n} - z, y_{n}\rangle \\
&= \|x_{n + 1} - x_{n}\|^{2} + 2\langle x_{n + 1} - x_{n}, x_{n} - z\rangle - 2\eta \langle x_{n} - z, y_{n}\rangle \\
&= \eta^{2}\|y_{n + 1}\|^{2} + 2\eta \langle y_{n + 1} - y_{n}, x_{n} - z\rangle \\
&= \eta^{2}\|y_{n + 1}\|^{2} - 2\eta^{2}\langle \nabla f(x_{n + 1}), x_{n} - z\rangle \\
&=\eta^2\|y_{n+1}\|^2-2\eta^2\langle \nabla f(x_{n+1}), x_{n+1}-z\rangle-2\eta^2\langle \nabla f(x_{n+1}), x_{n}-x_{n+1} \rangle \\
    &=\eta^2\|y_{n+1}\|^2-2\eta^2\langle \nabla f(x_{n+1}), x_{n+1}-z\rangle+2\eta^3\langle \nabla f(x_{n+1}), y_{n+1} \rangle \\
    &=\eta^2\|y_{n+1}+\eta \nabla f(x_{n+1})\|^2-2\eta^2\langle \nabla f(x_{n+1}), x_{n+1}-z\rangle-\eta^4 \|\nabla f(x_{n+1})\|^2 \\
    &=\eta^2 \|y_n\|^2-2\eta^2\langle \nabla f(x_{n+1}), x_{n+1}-z\rangle-\eta^4 \|\nabla f(x_{n+1})\|^2 \\
    &\leq2\eta^2(f(x_{0})-f(x_n))+2\eta^2 (f(z)-f(x_{n+1}))\numberthis\label{eq:deltan-bound-imp}.
\end{align*}
The final step uses the convexity of \(f\).
Note that \cref{eq:deltan-bound-imp} takes the same form as \cref{eq:deltan-bound} shown for the extragradient integrator.
Since the implicit integrator also satisfies a bound on \(h_{n}\) from \Cref{lem:hf-imp-convex-hnterm} that matches the bound for the \(h_{n}\) term for the extragradient integrator in \Cref{lem:hf-extg-convex-hnterm}, and the non increasing property of the Hamiltonian (\Cref{lem:hf-imp-nonincrease-ham}), the rest of the proof follows according to the proof of \Cref{lem:hf-extg-convex-main}.
\end{proofof}

\subsubsection{Proof of \texorpdfstring{\Cref{lem:hf-imp-convex-mix-main}}{Lemma 17}}\label{subsub_apdx:multi_step_HFimp}
\begin{proof}
Since \cref{eq:deltan-bound-imp} for implicit integrator is identical with \Cref{eq:deltan-bound} for extragradient integrator and \Cref{lem:hf-imp-convex-hnterm} for implicit integrator is identical with \Cref{lem:hf-extg-convex-hnterm} for extragradient integrator, the same derivation in the proof of \Cref{lem:hf-extg-convex-mix-main} shows the following for iterations of implicit integrator, whenever $N\geq 4$:
\begin{equation}
\label{eq:cor:hf-imp-convex-main-2}
    \frac{1}{2}\|x^{\mathrm{avg}}(x_{0}; N) - \xstar\|^{2} \leq \frac{1}{2}\|x_{0} - \xstar\|^{2} + \frac{\eta^{2}N(N + 1)}{4}(f(x_{0}) - f(\xstar))~.
\end{equation}
Therefore, like in the proof of \Cref{lem:hf-convex-mix-main} and \Cref{lem:hf-extg-convex-mix-main}, we consider a weighted sum of the following inequalities from the equations: \cref{eqn:oracle_step_c_imp,eq:cor:hf-imp-convex-main-2} and the non-increasing property of the implicit integrator (\Cref{lem:hf-imp-nonincrease-ham}).
\begin{align*}
f(x^{\mathrm{avg}}(x_{0}; N )) - f(z) + \frac{\|x_{N} - z\|^{2}}{3\eta^{2}N(N+1)} 
    &\leq \frac{2}{3}(f(x_{0}) - f(z)) + \frac{\|x_{0} - z\|^{2}}{3\eta^{2}N(N + 1)}~ \\
    \frac{1}{3\eta^{2}N(N + 1)}\|x^{\mathrm{avg}}(x_{0}; N) - x^{\star}\|^{2} &\leq \frac{1}{3\eta^{2}N(N + 1)}\|x_{0} - x^{\star}\|^{2} + \frac{1}{6}(f(x_{0}) - f(x^{\star})) \\
    f(x_{N}) - f(x^{\star}) &\leq f(x_{0}) - f(x^{\star})
\end{align*}
with weights \(\lambda, 1, \lambda^{2}\) respectively. Identical simplification as in the proof of \Cref{lem:hf-extg-convex-mix-main} then gives:
\begin{multline*}
    f(x^{\mathrm{mix}}(x_{0}; N)) - f(\xstar) + \frac{1}{3\lambda\eta^{2}N(N + 1)}\|x^{\mathrm{mix}}(x_{0}; N) - \xstar\|^{2} \\
    \leq c_{\lambda}(f(x_{0}) - f(\xstar)) + \frac{1}{3\lambda\eta^{2}N(N + 1)}\|x_{0} - \xstar\|^{2}
\end{multline*}
which is precisely the statement of the lemma.
\end{proof}

\subsubsection{Proof of \texorpdfstring{\Cref{lem:hf-imp-convex-hnterm}}{Lemma 18}}\label{subsubapdx:helper_lemma_proof}

\begin{proof}
Denote \(h_{n} = \langle x_{n} - z, y_{n}\rangle\).
We first compute the difference
\begin{align*}
    h_{n + 1} - h_{n} &= \langle x_{n + 1} - z, y_{n + 1}\rangle - \langle x_{n} - z, y_{n}\rangle \\
    &= \langle x_{n + 1} - z, y_{n + 1} - y_{n}\rangle + \langle x_{n + 1} - z, y_{n}\rangle - \langle x_{n} - z, y_{n}\rangle \\
    &= -\eta \langle x_{n + 1} - z, \nabla f(x_{n + 1})\rangle + \langle x_{n + 1} - x_{n}, y_{n}\rangle~.
\end{align*}
For the first term on the left hand side, we use convexity of \(f\):
\begin{equation*}
    -\eta \langle x_{n + 1} - z, \nabla f(x_{n + 1})\rangle \leq \eta (f(z) - f(x_{n + 1}))~.
\end{equation*}
For the second term on the left hand side, we use the update rule and convexity of $f$:
\begin{align*}
    \langle x_{n+1}-x_n, y_n\rangle &= \eta\langle y_{n+1}, y_n\rangle \\
    &=\eta\langle y_{n}-\eta \nabla f(x_{n+1}), y_n\rangle \\
    &=\eta||y_n||^2-\eta \langle \nabla f(x_{n+1}), \eta y_n\rangle\\
    &\overset{(a)}{=}\eta||y_n||^2-\eta \langle \nabla f(x_{n+1}), x_{n+1}-x_n+\eta^2 \nabla f(x_{n+1})\rangle\\
    &=\eta||y_n||^2-\eta^3||\nabla f(x_{n+1})||^2-\eta \langle x_{n+1}-x_n, \nabla f(x_{n+1})\rangle\\
    &\overset{(b)}{\leq}\eta||y_n||^2+\eta(f(x_n)-f(x_{n+1}))\\
    &\overset{(c)}{\leq}2\eta f(x_0)-2\eta f(x_n)+\eta(f(x_n)-f(x_{n+1}))\\
    &=2\eta f(x_{0}) - \eta f(x_{n}) - \eta f(x_{n + 1}).
\end{align*}
In step $(a)$, we use the property that $x_{n+1}=x_n+\eta y_{n+1}=x_n+\eta (y_n-\eta \nabla f(x_{n+1}))$. In step $(b)$ we used the convexity of $f$. In step $(c)$ we use the fact that along the iterates of implicit integrator, \(f(x_{n}) + \frac{1}{2}\|y_{n}\|^{2} \leq f(x_{0})\).

Combining the bounds, we get
\begin{equation*}
    h_{n + 1} - h_{n} \leq 2\eta (f(x_{0}) - f(z)) - \eta (f(x_{n}) - f(z)) - 2\eta (f(x_{n + 1}) - f(z))~.
\end{equation*}
Summing both sides from \(n =0\) to \(n = N - 1\) and noting that \(h_{0} = 0\),
\begin{align*}
    h_{N} &\leq 2\eta N(f(x_{0}) - f(z)) - 3\eta \sum_{n=1}^{N - 1} (f(x_{n}) - f(z)) - 2\eta (f(x_{N}) - f(z)) - \eta (f(x_{0}) - f(z)) \\
    &\leq 2\eta N(f(x_{0}) - f(z)) - 3\eta \sum_{n=1}^{N}(f(x_{n}) - f(z))
\end{align*}
where in the final inequality we use the fact that \(f(x_{N}) \leq f(x_{0})\).
\end{proof}
\section{Numerical Experiments}
\label{sec:num-exp}

In this section, we perform numerical experiments\footnote[2]{Code for this can be found at: \url{https://github.com/vishwakftw/dHFA}.} to corroborate our theoretical findings in the preceding sections.
We specifically assess the performance on two classical tasks: linear regression and logistic regression.
We use \((A, b)\) to be denote the dataset where \(A\) is a \(\bbR^{n \times d}\) matrix representing \(n\) covariates of dimension \(d\) each, and \(b \in \calY^{n}\) is the vector of associated responses.
For linear regression \(\calY = \bbR\), and for logistic regression \(\calY \in \{-1, +1\}\).
The objectives for linear and logistic regression are
\begin{align*}
    f_{\text{linear}}(x) &:= \frac{1}{2n}\|Ax - b\|^{2} + \frac{\beta}{2}\|x\|^{2}~, \\
    f_{\text{logistic}}(x) &:= \frac{1}{n}\sum_{i=1}^{n} \log(1 + \exp(-b_{i} A_{i}^{\top}x)) + \frac{\beta}{2}\|x\|^{2}~,
\end{align*}
respectively.
In both settings, \(\beta > 0\) is the regularization parameter.

\paragraph{Linear Regression}
We set \(n = 100\) and \(d = 200\) (an underdetermined system) where \(\lambda_{\min}(A^{\top}A) = 0\).
The matrix \(A\) is constructed by sampling each entry uniformly in \(\left[-\nicefrac{2}{\sqrt{d}}, \nicefrac{2}{\sqrt{d}}\right]\), and setting \(b\) as \(Ax^{\star} + \xi\) where \(x^{\star} = \bm{1}_{d}\) and \(\xi\) is a \(d\)-dimensional standard normal vector.
The Hessian is computable in closed form as \(\frac{1}{n} A^{\top}A + \beta \cdot \rmI_{d}\), and hence the strong convexity and smoothness parameters are \(\alpha = \beta + \frac{1}{n}\lambda_{\min}(A^{\top}A) = \beta\) and \(L = \beta + \frac{1}{n}\lambda_{\max}(A^{\top}A)\) respectively and \(\kappa = \frac{L}{\alpha}\).

\paragraph{Logistic Regression} We set \(n = 500\) and \(d = 100\).
The matrix \(A\) is constructed in the same way as done for linear regression above.
The response is \(b_{i} = \mathrm{sign}(A_{i}^{\top}x^{\star} + \xi)\) where \(x^{\star} = \bm{1}_{d}\) and \(\xi\) is a \(d\)-dimensional standard normal vector.
The smoothness and strongly convexity parameters cannot be computed in closed form for this problem, however the strong convexity parameter is at least \(\beta\) owing to the convexity of the data fidelity and the regularization terms.

\vspace*{-2mm}
\subsection{Comparison with gradient descent and accelerated gradient descent}

\begin{figure}[t]
\begin{minipage}{0.24\linewidth}
\centering
\includegraphics[width=\linewidth]{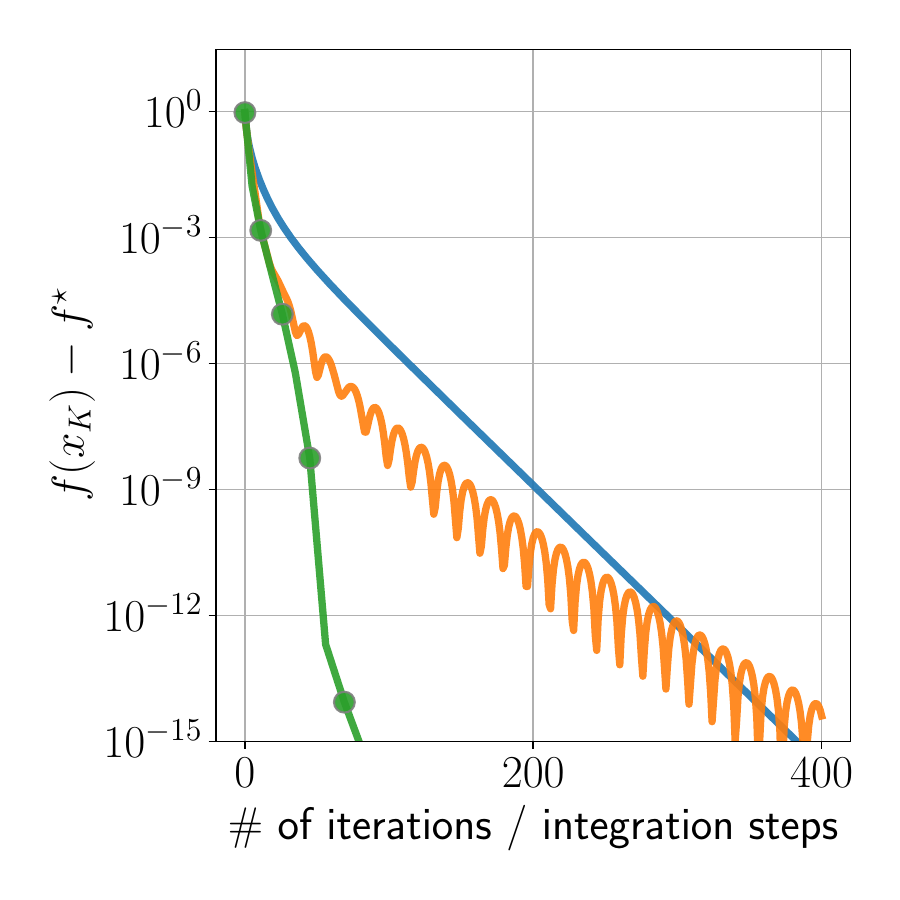} \\
{\footnotesize Linear Regression\\\(\beta = 0\).}
\end{minipage}
\hfill
\begin{minipage}{0.24\linewidth}
\centering
\includegraphics[width=\linewidth]{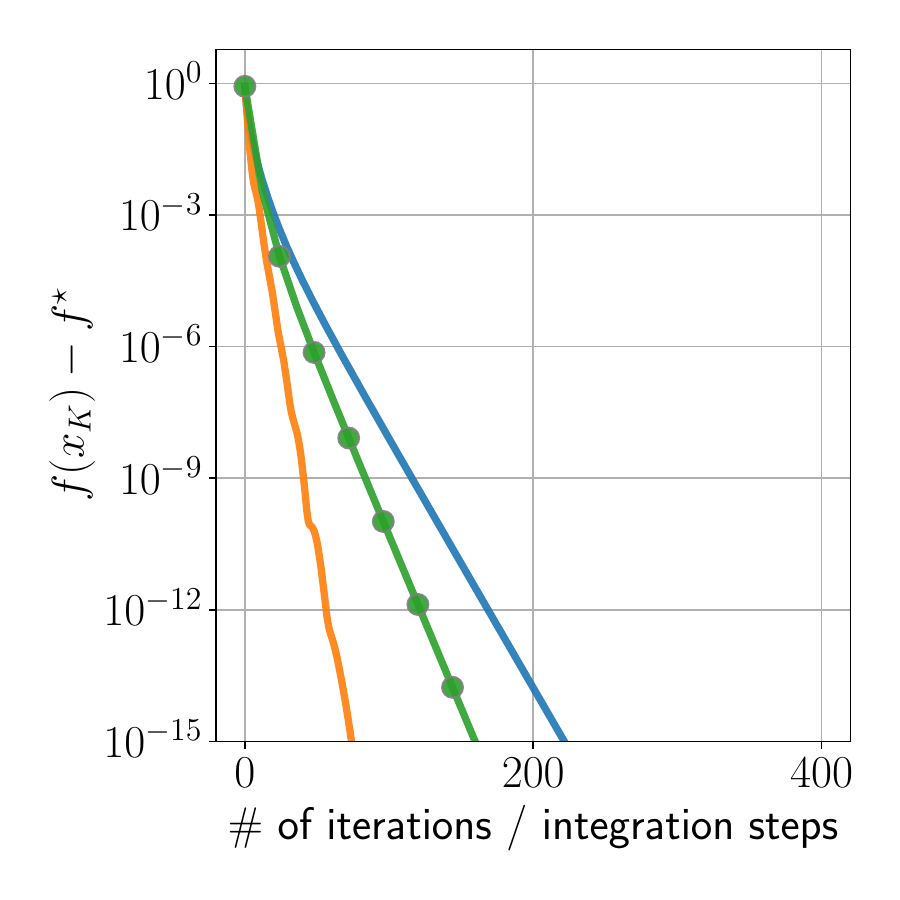} \\
{\footnotesize Linear Regression\\\(\beta = 10^{-3}\).}
\end{minipage}
\hfill
\begin{minipage}{0.24\linewidth}
\centering
\includegraphics[width=\linewidth]{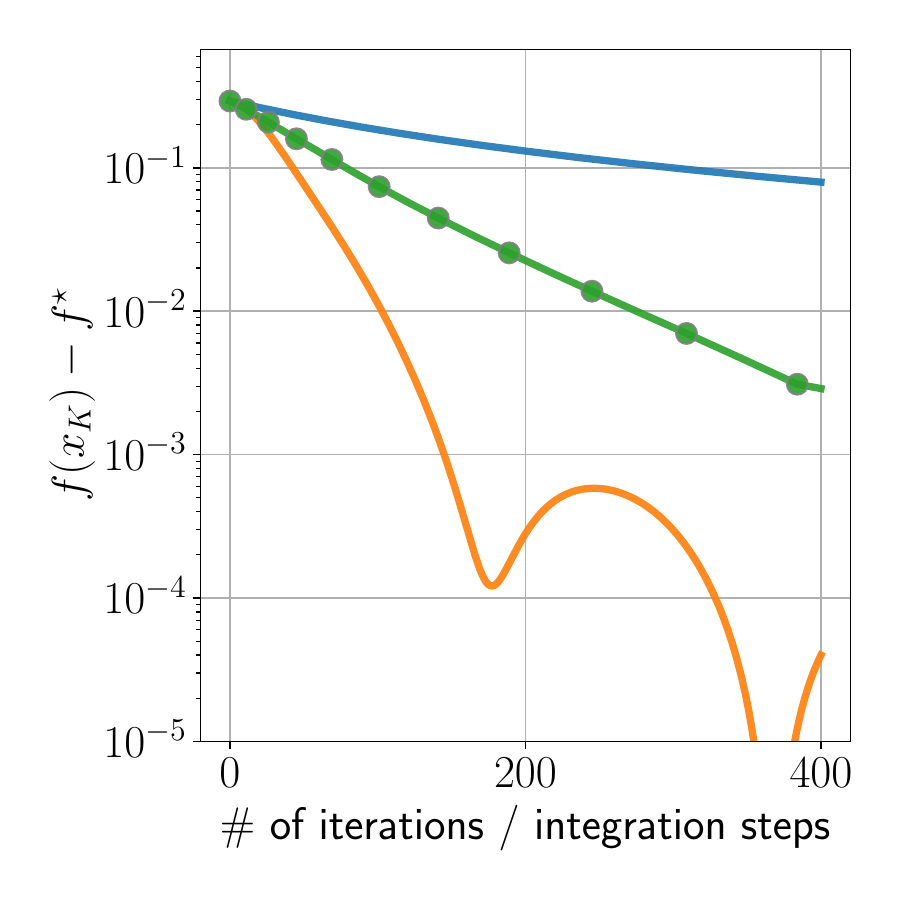} \\
{\footnotesize Logistic Regression\\\(\beta = 0\).}
\end{minipage}
\hfill
\begin{minipage}{0.24\linewidth}
\centering
\includegraphics[width=\linewidth]{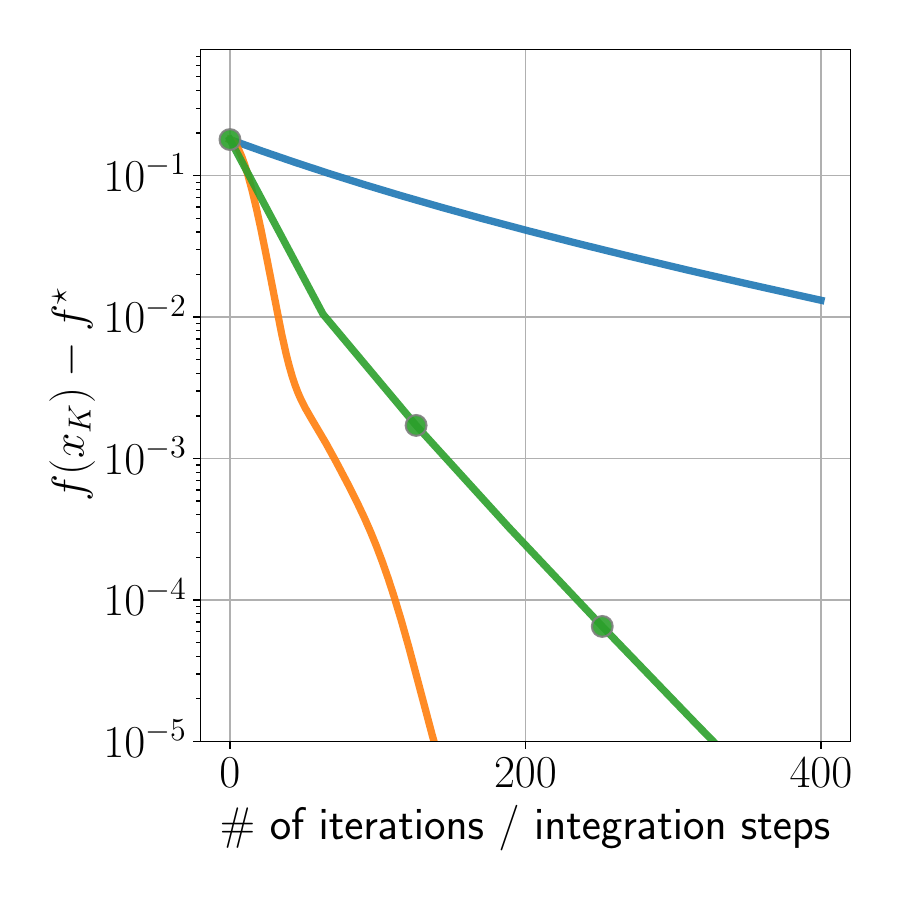} \\
{\footnotesize Logistic Regression\\\(\beta = 10^{-3}\).}
\end{minipage}
\caption{Comparison between \textsf{GD}  (\textcolor{Cerulean}{\rule[2pt]{15pt}{1.5pt}}), \textsf{AGD} (\textcolor{orange}{\rule[2pt]{15pt}{1.5pt}}), and \nameref{alg:disc_phfopt} (\textcolor{ForestGreen}{\rule[2pt]{10pt}{1.5pt}\(\bullet\)\rule[2pt]{10pt}{1.5pt}}).}
\label{fig:main-lin-log-reg}
\end{figure}

For both linear regression and logistic regression, we consider two settings of \(\beta\): \(\beta \in \{0, 10^{-3}\}\).
When \(\beta = 0\), both problems are convex.
For Nesterov's accelerated gradient method (\textsf{AGD}), the momentum parameter \(\gamma_{k}\) at iteration is set to \(\frac{k - 1}{k + 2}\) in this setting.
For \nameref{alg:disc_phfopt}, \(\lambda = \frac{1 + \sqrt{3}}{2}\) and choose \(q = \frac{1}{\sqrt{c_{\lambda}}}\) to determine the sequence \(\{N_{k}\}_{k \geq 0}\) according to \Cref{cor:hf-extg-cvx}.
On the other hand, when \(\beta = 10^{-3}\), both problems are strongly convex.
Here, for \textsf{AGD}, the momentum parameter \(\gamma_{k}\) is set to \(\frac{1 - \sqrt{\eta\alpha}}{1 + \sqrt{\eta\alpha}}\) and for \nameref{alg:disc_phfopt}, \(\lambda = 0\) and \(N_{k} = \lceil \frac{2}{\eta\sqrt{\alpha}}\rceil\) for all iterations \(k\) according to \Cref{cor:hf-extg-strong-cvx}.
The step size \(\eta\) to set to \(\frac{1}{L}\) for \textsf{GD} and \textsf{AGD}, and to \(\frac{1}{\sqrt{L}}\) for \nameref{alg:disc_phfopt} for the linear regression problem.
For the logistic regression problem, we performed grid search on a collection of 25 uniformly log-spaced values from \(10^{-5}\) to \(1\) to determine the optimal step size $\eta$, since since we do not have exact access to the smoothness parameter \(L\).

In \cref{fig:main-lin-log-reg}, we plot these curves for each of the four settings.
In linear regression with \(\beta = 0\)\,, we observe that \nameref{alg:disc_phfopt} is clearly better than the rest, while in the other settings, \textsf{AGD} converges faster.
This is likely due to a suboptimal setting of parameters, which we note can be further tuned.
Regardless, we see clear acceleration by \nameref{alg:disc_phfopt} relative to \textsf{GD}.
Interestingly, in the logistic regression experiments, the best-performing step size on our search grid was \(\eta=1\) for all algorithms, which was the largest value considered.

\vspace*{-2mm}
\subsection{Alternative averaging for iterates}

The weighted average of the iterates \(x^{\mathrm{avg}}\) used in \nameref{alg:disc_phfopt} (Line 3) is motivated by the continuous-time average \(X^{\mathrm{avg}}\).
Here, we investigate the effect of replacing this with the simple average of the iterates generated by extragradient integrator \(x^{\mathrm{s-avg}}(x_{0}; N) = \frac{1}{N}\sum_{n=1}^{N} x_{n}\).
With the same setting of step size \(\eta\) for the empirical setups described previously, we run \nameref{alg:disc_phfopt} with \(x^{\mathrm{s-avg}}\) in lieu of \(x^{\mathrm{avg}}\), and compare it with the original \nameref{alg:disc_phfopt}; the results are plotted in \cref{fig:avg-comps}.
From these, interestingly we see an advantage of using the simple average in all settings except for linear regression with \(\beta = 0\), which is a simple convex setting.
We also observe that when \(\beta = 10^{-3}\) in both the linear and logistic regression setups, the performance closely matches that of \textsf{AGD}.

\begin{figure}[t]
\begin{minipage}{0.24\linewidth}
\centering
\includegraphics[width=\linewidth]{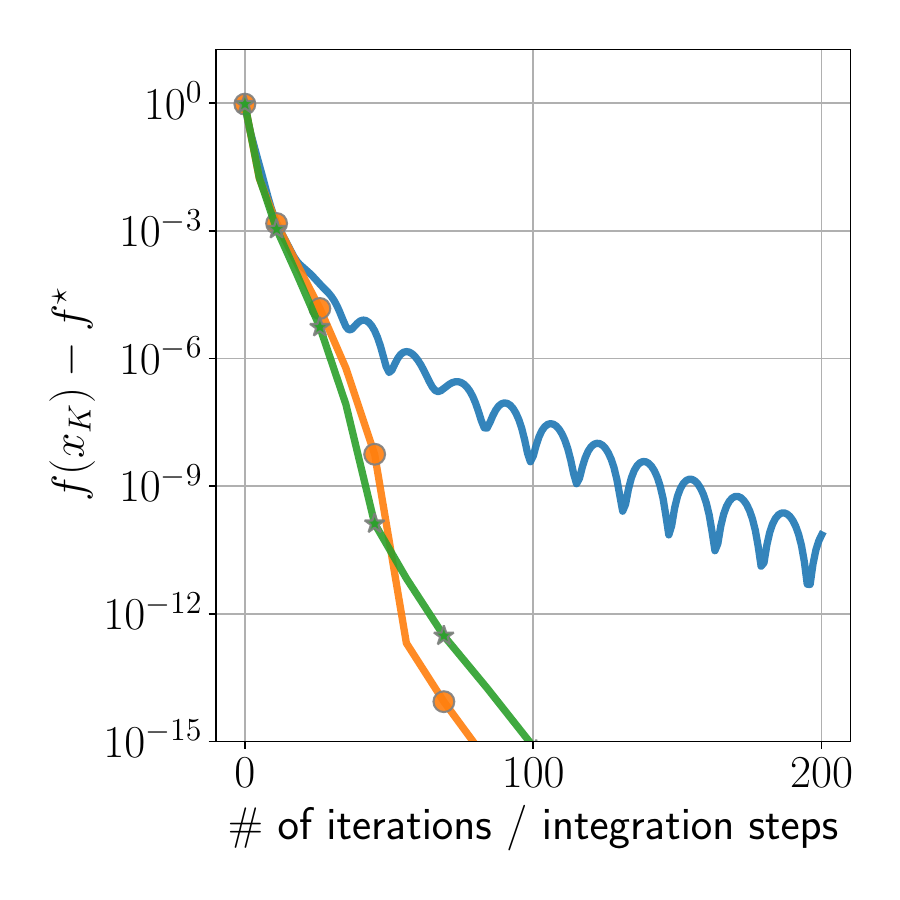} \\
{\footnotesize Linear Regression\\\(\beta = 0\).}
\end{minipage}
\hfill
\begin{minipage}{0.24\linewidth}
\centering
\includegraphics[width=\linewidth]{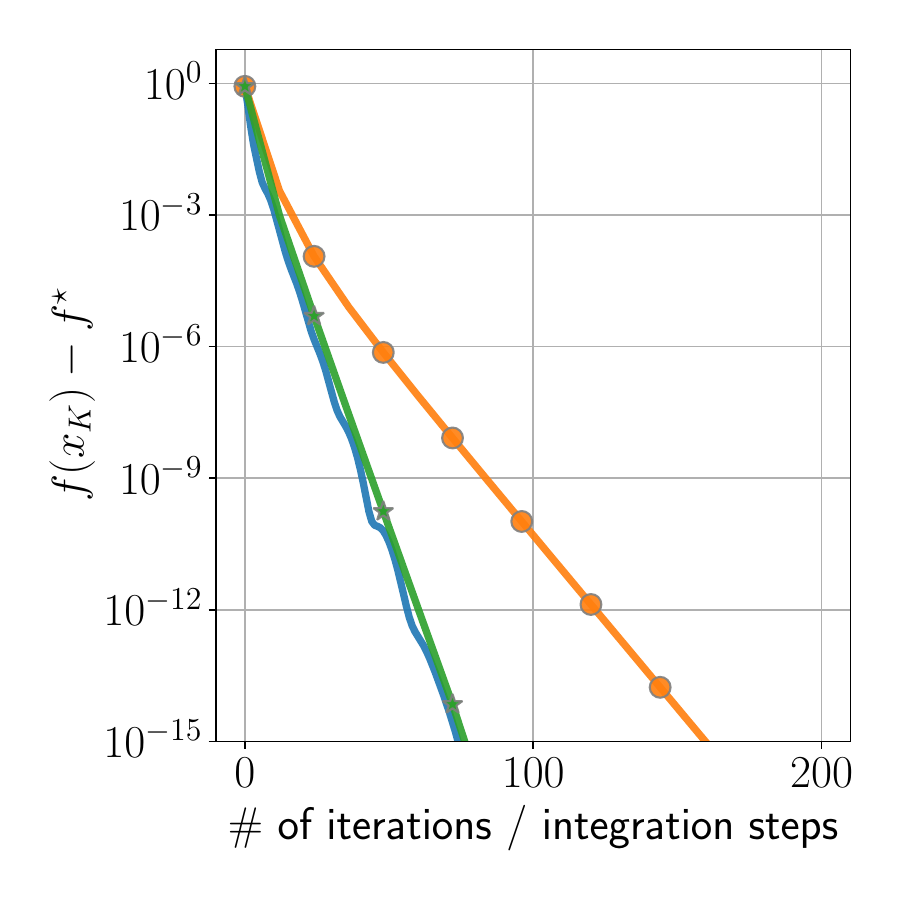} \\
{\footnotesize Linear Regression\\\(\beta = 10^{-3}\).}
\end{minipage}
\hfill
\begin{minipage}{0.24\linewidth}
\centering
\includegraphics[width=\linewidth]{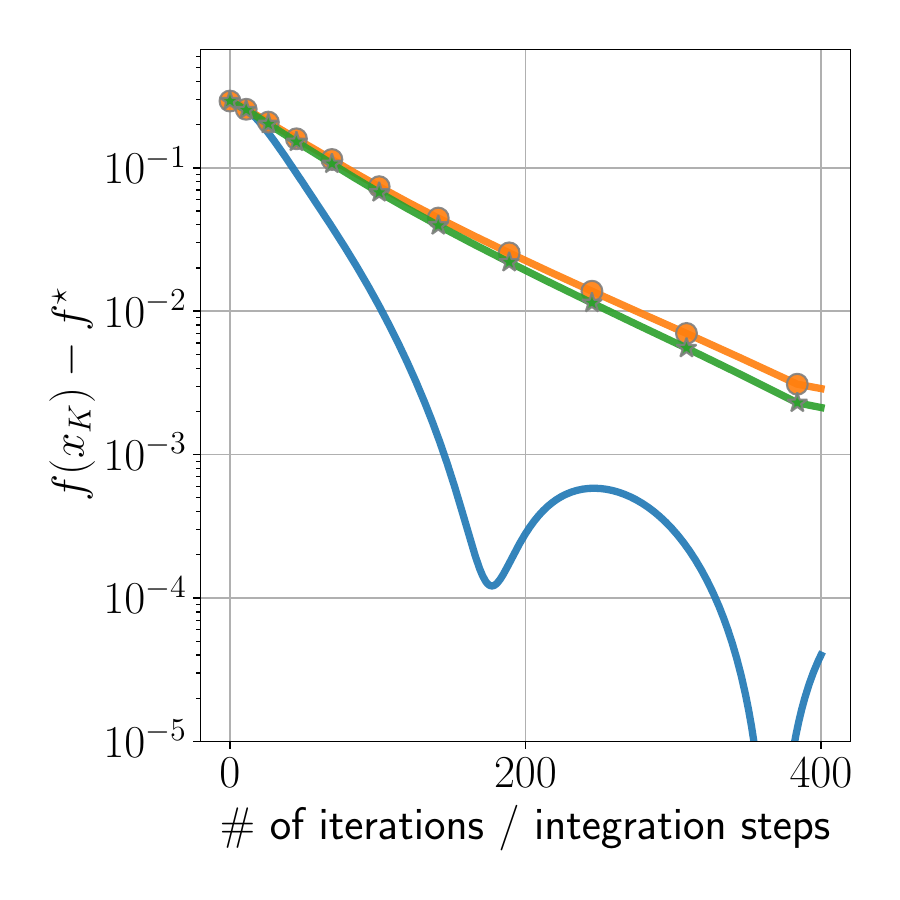} \\
{\footnotesize Logistic Regression\\\(\beta = 0\).}
\end{minipage}
\hfill
\begin{minipage}{0.24\linewidth}
\centering
\includegraphics[width=\linewidth]{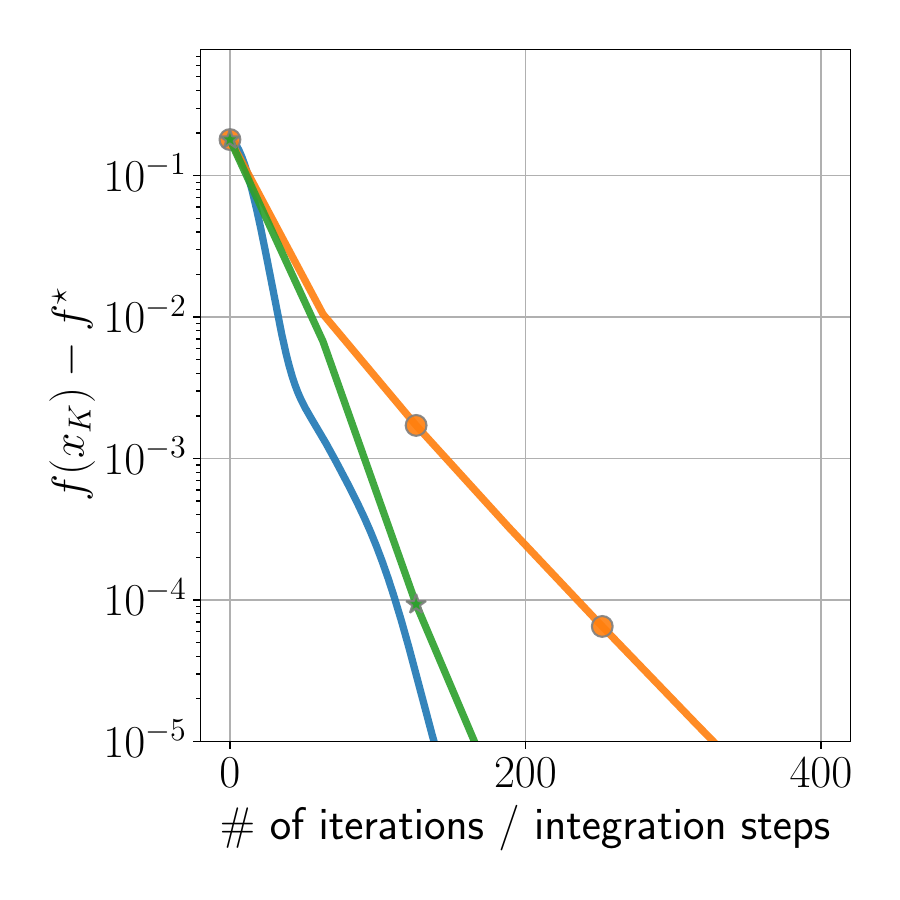} \\
{\footnotesize Logistic Regression\\\(\beta = 10^{-3}\).}
\end{minipage}
\caption{Comparison between averaging schemes.  The plots depict three curves per setting: \nameref{alg:disc_phfopt} with \(x^{\mathrm{avg}}\) (\textcolor{orange}{\rule[2pt]{10pt}{1.5pt}\(\bullet\)\rule[2pt]{10pt}{1.5pt}}), \nameref{alg:disc_phfopt} with \(x^{\mathrm{s-avg}}\) (\textcolor{ForestGreen}{\rule[2pt]{10pt}{1.5pt}\(\star\)\rule[2pt]{10pt}{1.5pt}}), and \textsf{AGD} (\textcolor{Cerulean}{\rule[2pt]{15pt}{1.5pt}}).}
\label{fig:avg-comps}
\end{figure}

\vspace*{-2mm}
\subsection{Varying the integrator in \nameref{alg:disc_phfopt}}

\nameref{alg:disc_phfopt} is a practically viable version of \nameref{alg:phfopt} which is made possible by using the extragradient integrator (\ref{eqn:extg-update}) for the Hamiltonian dynamics.
Coincidentally, this integrator also satisfies desirable properties, which lead to the theoretical guarantees of \nameref{alg:disc_phfopt} discussed in the previous section.
In this subsection, we investigate how different integrators perform empirically in comparison to the extragradient integrator in \nameref{alg:disc_phfopt}.
We specifically focus on the explicit and leapfrog integrators (see \Cref{app:sec:integrator-discuss} for definitions) as these only require access to the gradients of \(f\).
The empirical results are plotted in \cref{fig:int-comps}.

\begin{figure}[t]
\begin{minipage}{0.24\linewidth}
\centering
\includegraphics[width=\linewidth]{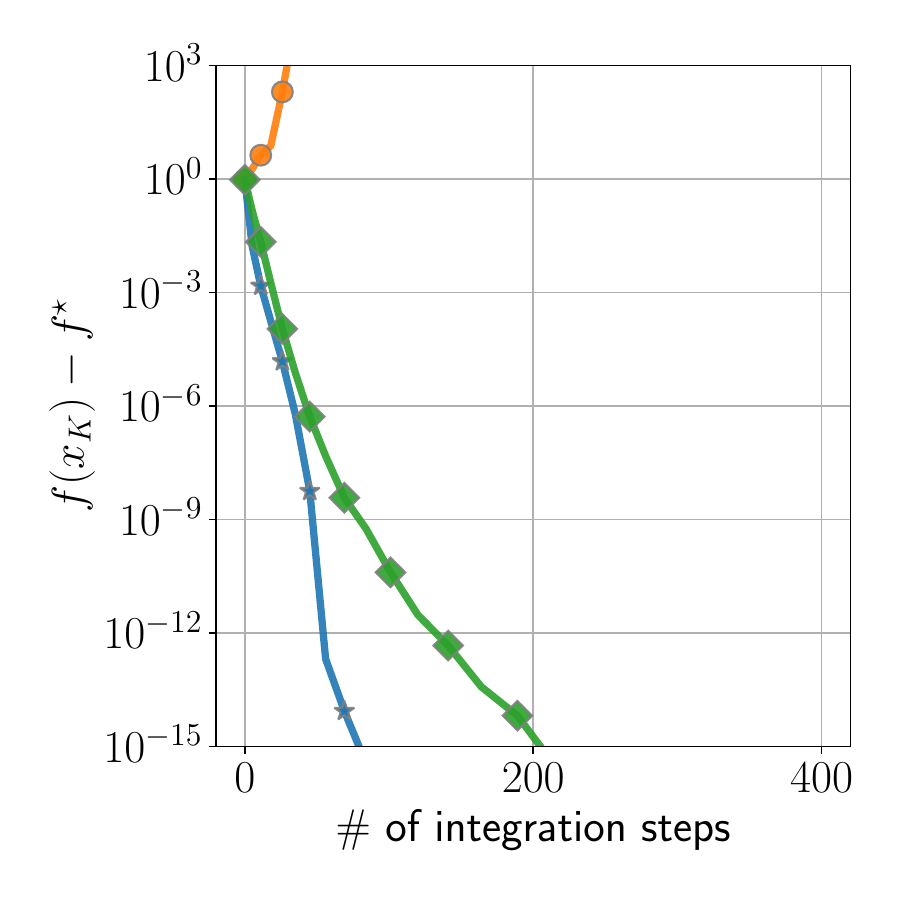} \\
{\footnotesize Linear Regression\\\(\beta = 0\).}
\end{minipage}
\hfill
\begin{minipage}{0.24\linewidth}
\centering
\includegraphics[width=\linewidth]{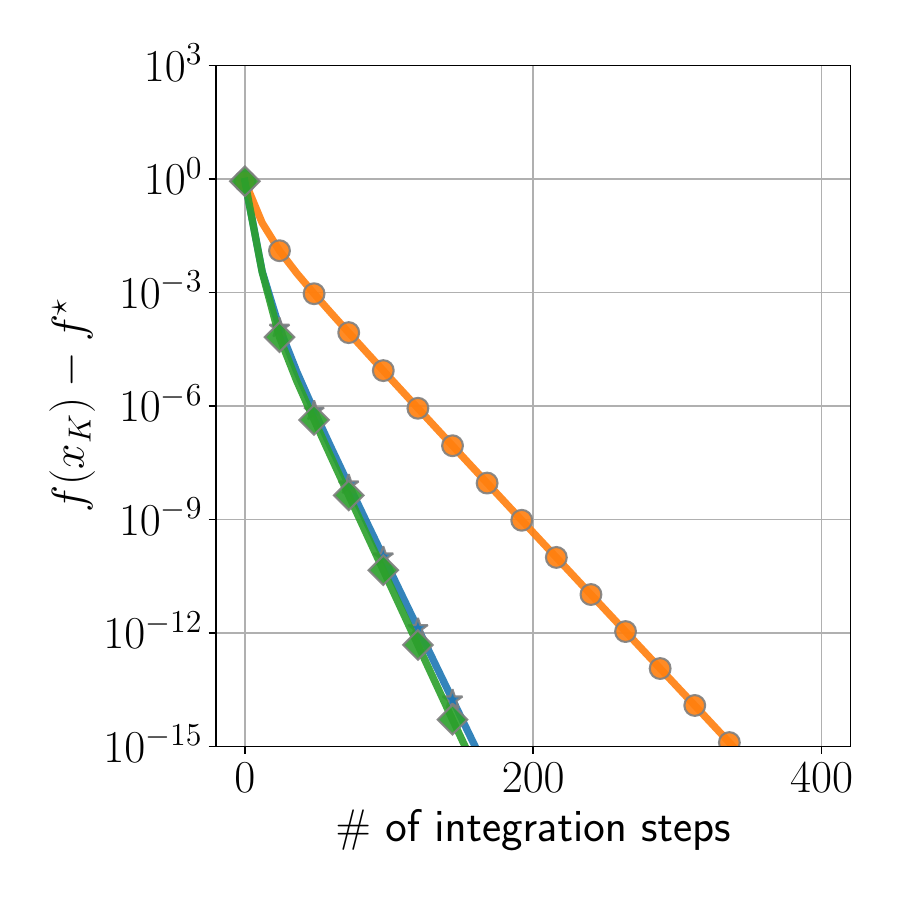} \\
{\footnotesize Linear Regression\\\(\beta = 10^{-3}\).}
\end{minipage}
\hfill
\begin{minipage}{0.24\linewidth}
\centering
\includegraphics[width=\linewidth]{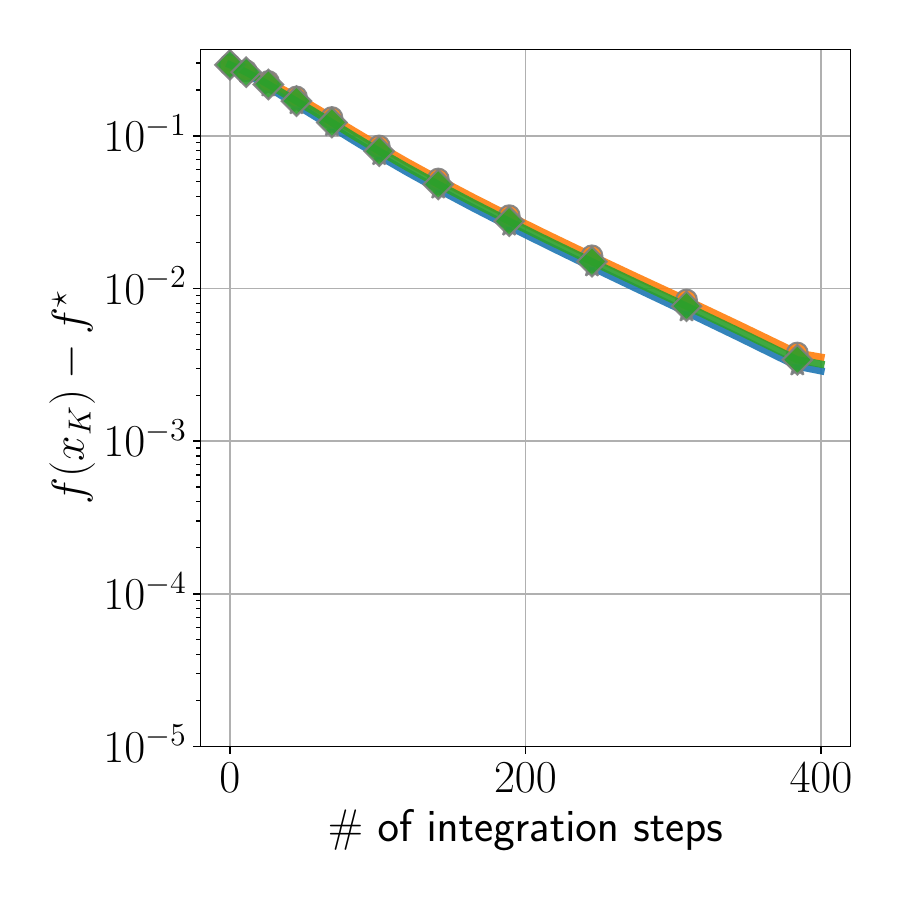} \\
{\footnotesize Logistic Regression\\\(\beta = 0\).}
\end{minipage}
\hfill
\begin{minipage}{0.24\linewidth}
\centering
\includegraphics[width=\linewidth]{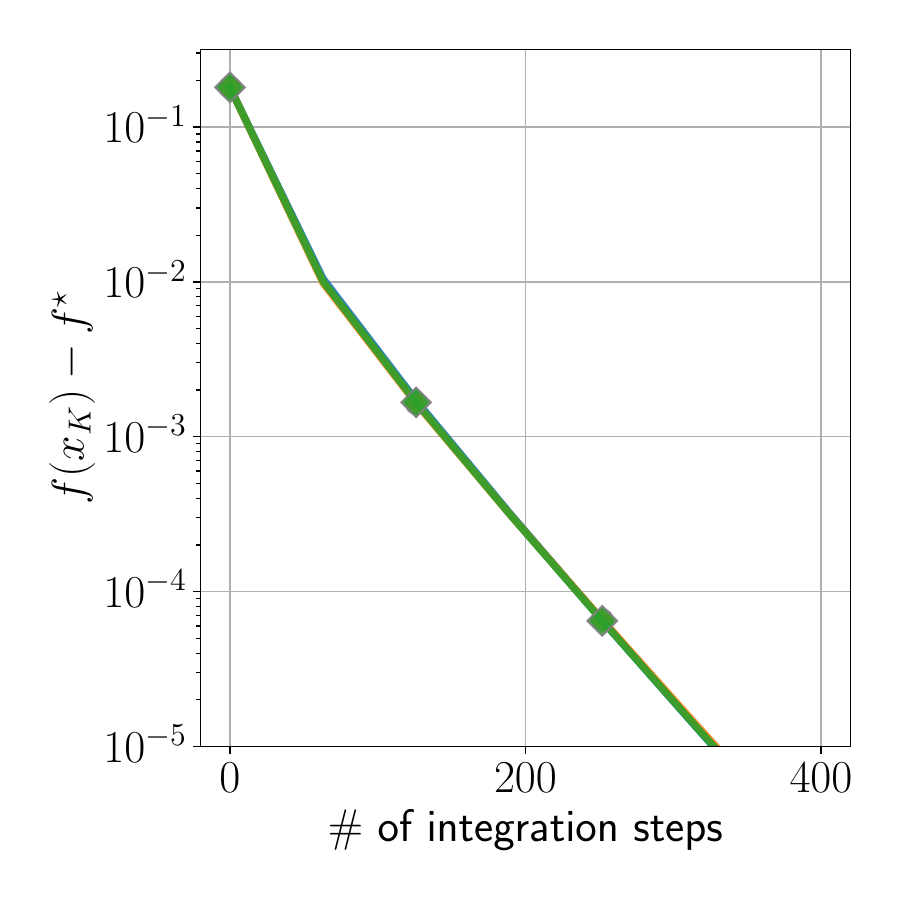} \\
{\footnotesize Logistic Regression\\\(\beta = 10^{-3}\).}
\end{minipage}
\caption{Comparison between integrators. 
The plots depict three curves per setting: 
\textsf{dHFA-ex} (\textcolor{orange}{\rule[2pt]{10pt}{1.5pt}\(\bullet\)\rule[2pt]{10pt}{1.5pt}}), \textsf{dHFA-lf} (\textcolor{ForestGreen}{\rule[2pt]{10pt}{1.5pt}\(\blacklozenge\)\rule[2pt]{10pt}{1.5pt}}), \nameref{alg:disc_phfopt} (\textcolor{Cerulean}{\rule[2pt]{10pt}{1.5pt}\(\star\)\rule[2pt]{10pt}{1.5pt}}).
\textsf{dHFA-ex} and \textsf{dHFA-lf} are \nameref{alg:disc_phfopt} but with the extragradient integrator replaced by the explicit and leapfrog integrators respectively.}
\label{fig:int-comps}
\end{figure}

We use the same empirical setups as described previously.
The step size for these methods are set to the same value; for linear regression, this is set as \(\eta = \frac{1}{\sqrt{L}}\), whereas for logistic regression, this is set as \(\eta = 1\).
From the plots on the right, we see that the extragradient integrator is the best performing integrator in all settings.
Interestingly, for the linear regression setup with \(\beta = 0\), the explicit integrator diverges rapidly, while for the logistic regression problem, the performance matches closely with the others.
\end{document}